\numberwithin{equation}{section}
\newtheorem{theorem}{Theorem}[section]
\newtheorem{lemma}[theorem]{Lemma}
\theoremstyle{definition}
\theoremstyle{definition} 
\newtheorem{remark}[theorem]{Remark}
\newtheorem{remarks}[theorem]{Remarks}
\newcommand{\bea}{\begin{eqnarray}}
\newcommand{\eea}{\end{eqnarray}}
\newcommand{\beas}{\begin{eqnarray*}}
\newcommand{\eeas}{\end{eqnarray*}}
\newcommand{\beq}{\begin{equation}}
\newcommand{\eeq}{\end{equation}}
\newcommand{\cA}{\mathcal A}
\newcommand{\cB}{\mathcal B}
\newcommand{\cC}{\mathcal C}
\newcommand{\cD}{\mathcal D}
\newcommand{\cE}{\mathcal E}
\newcommand{\cF}{\mathcal F}
\newcommand{\cG}{\mathcal G}
\newcommand{\cH}{\mathcal H}
\newcommand{\cR}{\mathcal R}
\newcommand{\cN}{\mathcal N}
\newcommand{\cMH}{\mathcal{MH}}
\newcommand{\cSM}{\mathcal{SM}}
\newcommand{\BB}{\mathbb B}
\newcommand{\R}{\mathbb R}
\newcommand{\EE}{\mathbb E}
\newcommand{\FF}{\mathbb F}
\newcommand{\ve}{\varepsilon}
\DeclareMathSymbol{\complement}{\mathord}{AMSa}{"7B}
\def\vv<#1>{\langle #1\rangle}
\def\Vv<#1>{\bigl\langle #1\bigr\rangle}
\begin{document}


\title[On Stefan problems with variable surface energy]
{On thermodynamically consistent Stefan problems with variable surface energy}

\author[J.~Pr\"uss]{Jan Pr\"uss}
\address{Institut f\"ur Mathematik \\
         Martin-Luther-Universit\"at Halle-Witten\-berg\\
         D-60120 Halle, Germany}
\email{jan.pruess@mathematik.uni-halle.de}

\author[G.~Simonett]{Gieri Simonett}
\address{Department of Mathematics\\
         Vanderbilt University \\
         Nashville, TN~37240, USA}
\email{gieri.simonett@vanderbilt.edu}

\author[M.~Wilke]{Mathias Wilke}
\address{Institut f\"ur Mathematik \\
         Martin-Luther-Universit\"at Halle-Witten\-berg\\
         D-60120 Halle, Germany}
\email{mathias.wilke@mathematik.uni-halle.de}


\subjclass[2000]{Primary: 35R35, 35B35, 35K55; Secondary: 80A22}
 \keywords{Phase transition, free boundary problem,
 Gibbs-Thomson law, kinetic undercooling, variable surface tension, surface energy, surface diffusion, stability, instability.}

\begin{abstract}
A thermodynamically consistent two-phase Stefan problem with
temperature-dependent surface tension and with or without kinetic undercooling is studied.
It is shown that these problems generate local semiflows in well-defined state manifolds.
If a solution does not exhibit singularities, it is proved that it exists globally in time
and converges towards an equilibrium of the problem.
In addition, stability and instability of equilibria is studied.
In particular, it is shown that multiple spheres of the same radius are unstable if surface heat capacity is small;
however, if kinetic undercooling is absent, they are stable if surface heat capacity is sufficiently large.
\end{abstract}
\maketitle
\section{Introduction}

In the recent publication \cite{PSZ10} the authors studied Stefan problems with surface tension and with or without kinetic undercooling which are consistent with the laws of thermodynamics, in the sense that the total energy is preserved and the total entropy is strictly increasing along nonconstant smooth solutions.

\medskip

\noindent
{\bf 1.} \, To formulate this problem, let $\Omega\subset \R^{n}$ be a bounded domain of class $C^{2}$, $n\geq2$.
$\Omega$ is occupied by a material that can undergo phase changes: at time $t$, phase $i$ occupies
the subdomain $\Omega_i(t)$ of
$\Omega$, respectively, with $i=1,2.$
We assume that $\partial \Omega_1(t)\cap\partial \Omega=\emptyset$; this means
that no {\em boundary contact} can occur.
The closed compact hypersurface $\Gamma(t):=\partial \Omega_1(t)\subset \Omega$
forms the interface between the phases.

The problem consists in  finding a family of closed compact hypersurfaces $\Gamma(t)$ contained in $\Omega$
and an appropriately smooth function $u:\R_+\times\bar{\Omega}\rightarrow\R$ such that
\begin{equation}
\label{stefan}
\left\{\begin{aligned}
\kappa (u)\partial_t u-{\rm div}(d(u)\nabla u)&=0 &&\text{in}&&\Omega\setminus\Gamma(t)\\
\partial_{\nu} u &=0 &&\text{on}&&\partial \Omega \\
[\![u]\!]&=0 &&\text{on}&&\Gamma(t)\\
[\![\psi(u)]\!]+\sigma \cH &=\gamma(u) V &&\text{on}&&\Gamma(t) \\
[\![d(u)\partial_\nu u]\!] &=(l(u)-\gamma(u)V) V &&\text{on}&&\Gamma(t)\\
 u(0)&=u_0 &&\text{in} &&\Omega\setminus\Gamma_0,\quad\\
 \Gamma(0)&=\Gamma_0. && &&
\end{aligned}\right.
\end{equation}
Here
 $u(t)$ denotes the (absolute) temperature,
 $\nu(t)$ the outer normal field of $\Omega_1(t)$,
 $V(t)$ the normal velocity of $\Gamma(t)$,
 $\cH(t)=\cH(\Gamma(t))=-{\rm div}_{\Gamma(t)} \nu(t)$ the sum of the principal curvatures, 
 and
 $[\![v]\!]=v_2|_{\Gamma(t)}-v_1|_{\Gamma(t)}$ the jump of a (continuous) function $v$ across $\Gamma(t)$.
Since $u$ means absolute temperature we always assume that  $u>0$.

Several quantities are derived from the free energies $\psi_i(u)$ as follows:
\begin{itemize}
\item
 $\epsilon_i(u):= \psi_i(u)+u\eta_i(u)$ denotes the internal energy in phase $i$,
\item
 $\eta_i(u) :=-\psi_i^\prime(u)$ the entropy,
\item
 $\kappa_i(u):= \epsilon^\prime_i(u)=-u\psi_i^{\prime\prime}(u)$ the  heat capacity,
\item
$l(u):=u[\![\psi^\prime(u)]\!]=-u[\![\eta(u)]\!]$ the latent heat.
\end{itemize}
Furthermore, $d_i(u)>0$ denotes the coefficient of heat conduction in Fourier's law,
 $\gamma(u)\geq0$ the coefficient of  kinetic undercooling, and $\sigma>0$ the coefficient of surface tension.
 In the sequel we drop the index $i$, as there is no danger of confusion; we just keep in mind that the coefficients 
 in the bulk depend on the phases.

The temperature is assumed to be continuous across the interface. However, the free energy and the conductivities depend on the respective phases,
and hence the jumps $\varphi(u):=[\![\psi(u)]\!]$,
$[\![\kappa(u)]\!]$, $[\![\eta(u)]\!]$, $[\![d(u)]\!]$ are in general non-zero at the interface.
Throughout we require that the heat capacities $\kappa_i(u)$ and diffusivities $d_i(u)$ are strictly positive over the whole temperature range $u>0$, and that $\varphi$ has exactly one zero $u_m>0$ called the {\em melting temperature}.

If we assume that the coefficient of surface tension $\sigma$ is constant, then this model is consistent with the laws of thermodynamics. In fact,
the {\sf total energy} of the system is given by
\begin{equation}\label{energy}
{\sf E}(u,\Gamma) = \int_{\Omega\setminus\Gamma} \epsilon(u)\,dx +
\int_\Gamma \sigma\, ds,
 \end{equation}
 and by the transport and surface transport theorem we have for smooth solutions
 \begin{align*}
 \frac{d}{dt}{\sf E}(u(t),\Gamma(t)) &= -\int_\Gamma \{[\![d(u)\partial_\nu u]\!] +[\![\epsilon(u)]\!]V
  + \sigma \cH V\}\,ds\\
&= -\int_\Gamma \{[\![d(u)\partial_\nu u]\!] -(l(u)-\gamma(u)V))V\}\,ds=0,
 \end{align*}
and thus, energy is conserved. Also the {\sf total entropy} $\Phi(u,\Gamma)$ defined by
\begin{equation}\label{entropy}
\Phi(u,\Gamma) = \int_{\Omega\setminus\Gamma} \eta(u)\,dx
 \end{equation}
 is nondecreasing along smooth solutions, as
  \begin{align*}
 \frac{d}{dt}\Phi(u(t),\Gamma(t)) &=\int_\Omega\frac{1}{u^2} d(u)|\nabla u|^2\,dx
 - \int_\Gamma \frac{1}{u}\{ [\![d(u)\partial_\nu u]\!]+u[\![\eta(u)]\!]V\}\,ds\\
 &=\int_\Omega \frac{1}{u^2}d(u)|\nabla u|^2\,dx
 + \int_\Gamma \frac{1}{u}\gamma(u) V^2\,ds\ge 0.
\end{align*}

\medskip

\noindent
{\bf 2.} \, In this paper we consider the physically important case where surface tension $\sigma=\sigma(u)$ is a function of surface temperature $u$. 
We refer to \cite{Gur07, CLV01, DrPa99, NaSc03, NVC02} 
for background information on the importance of variable surface tension
in fluid flows and phase transitions.

Then, following \cite{Ish06} and \cite{Gur07},  
the surface energy will be $\int_\Gamma \epsilon_\Gamma(u)\,ds$ instead of $\int_\Gamma\sigma\,ds$, where $\epsilon_\Gamma(u)$ denotes the density of surface energy.
In addition, one has to
take into account the total surface entropy $\int_\Gamma \eta_\Gamma(u) \,ds$, as well as balance of surface energy.
The latter means that the Stefan law has to be replaced by a dynamic equation on the moving interface $\Gamma(t)$ of the form
\begin{equation*}
\kappa_\Gamma(u) \partial_{t,n} u - {\rm div}_\Gamma (d_\Gamma(u)\nabla_\Gamma u)
= [\![d(u)\partial_\nu u]\!] -\big(l(u)-\gamma(u)V+l_\Gamma(u)\cH\big)V,
\end{equation*}
where $\partial_{t,n}$ denotes the time derivative in normal direction,
see \eqref{normal-derivative}. As in the bulk we define on the interface
\begin{itemize}
\item
 $\epsilon_\Gamma(u):= \sigma(u)+u\eta_\Gamma(u)$, the surface internal energy,
\item
 $\eta_\Gamma(u) :=-\sigma^\prime(u)$, the surface entropy,
\item
 $\kappa_\Gamma(u):= \epsilon^\prime_\Gamma(u)=-u\sigma^{\prime\prime}(u)$, the  surface heat capacity,
\item
$l_\Gamma(u):=u\sigma^\prime(u)=-u\eta_\Gamma(u)$, the surface latent heat.
\end{itemize}
We also employ Fourier's law on the interface to describe surface heat conduction, i.e. we set
$q_\Gamma :=-d_\Gamma(u)\nabla_\Gamma u$, which should be present as soon as the interface has heat capacity. Recalling that $u$ is assumed to be continuous across the interface the surface temperature
\begin{equation}
\label{u-Gamma}
u_\Gamma:=u_{|_\Gamma}
\end{equation}
is well-defined.

Obviously, if $\sigma$ is constant then $\epsilon_\Gamma=\sigma$, and $\eta_\Gamma=\kappa_\Gamma =l_\Gamma=0$, hence this model reduces to \eqref{stefan}. On the other hand, if $\sigma$ is linear in $u$ we still have $\kappa_\Gamma=0$ and then it makes sense to also set $d_\Gamma\equiv0$, to obtain the modified Stefan law
\begin{equation*}
 [\![d(u)\partial_\nu u]\!] =\big(l(u)-\gamma(u)V+l_\Gamma(u)\cH\big)V,
\end{equation*}
which differs from the Stefan law in \eqref{stefan} only by replacing $l(u)$ by $l(u)+l_\Gamma(u)\cH$. This is just a minor modification of \eqref{stefan}, and its analysis remains essentially the same as in \cite{PSZ10}. The only difference is that the stability condition for the equilibria, and in case $\gamma\equiv0$ also the well-posedness condition, changes.
More precisely, the well-posedness condition changes from $\varphi^\prime\neq0$ to $\lambda^\prime\neq0$ where $\lambda(s):=\varphi(s)/\sigma(s)$, and the stability condition modifies by replacing $\varphi^\prime/\sigma$ by $\lambda^\prime$.

Therefore we concentrate here on the  case where $\kappa_\Gamma(u),d_\Gamma(u)>0$, which means that $\sigma$ is strictly concave. It has been shown experimentally that positive surface heat capacity  $\kappa_\Gamma$
(as opposed to vanishing surface heat capacity) is important in
certain  practical situations; see \cite{Ward} for recent work in this direction.
Experimental evidence
also shows that $\sigma$ is  strictly decreasing, hence admits exactly one zero $u_c>0$; $\sigma(u)$ is positive in $(0,u_c)$ and negative for $u>u_c$. Physically, it is reasonable to assume $u_c>u_m$.
It turns out that the analysis of the problem with nonlinear surface tension is considerably different from the linear case.
In the sequel we always assume that
\begin{equation}
\label{regularity-coeff}
\begin{split}
 d_i,\psi_i,d_\Gamma,\sigma,\gamma\in C^3(0,u_c),\quad
 d_i,\kappa_i,d_\Gamma,\kappa_\Gamma,\sigma>0\;\;\text{on}\;\; (0,u_c),\quad i=1,2,
\end{split}
\end{equation}
if not stated otherwise.
Furthermore, we let
$\gamma\equiv0$ if there is no  undercooling, or $\gamma>0$ on $(0,u_c)$ if undercooling is present, and we 
restrict our attention to the temperature
range $u\in(0,u_c)$.

With these restrictions on the parameter functions, we consider the following problem:
\begin{equation}
\label{stefan-var}
\left\{\begin{aligned}
\kappa (u)\partial_t u-{\rm div}(d(u)\nabla u)&=0 &&\text{in}&&\Omega\setminus\Gamma(t)\\
\partial_{\nu} u &=0 &&\text{on}&&\partial \Omega \\
[\![u]\!]=0,\quad u_\Gamma&=u &&\text{on}&&\Gamma(t)\\
\varphi(u_\Gamma)+\sigma(u_\Gamma) \cH &=\gamma(u_\Gamma) V &&\text{on}&&\Gamma(t) \\
\kappa_\Gamma(u_\Gamma) \partial_{t,n} u_\Gamma - {\rm div}_\Gamma (d_\Gamma(u_\Gamma)\nabla_\Gamma u_\Gamma)&=&&\\
=[\![d(u)\partial_\nu u]\!] -(l(u_\Gamma)+l_\Gamma(u_\Gamma)\cH&-\gamma(u_\Gamma)V) V &&\text{on}&&\Gamma(t)\\
 u(0)&=u_0 && \text{in} && \Omega\setminus\Gamma_0,\\
 \Gamma(0)&=\Gamma_0. &&
\end{aligned}\right.
\end{equation}
 Here $\varphi(u)=[\![\psi(u)]\!]$, and $\partial_{t,n}u_\Gamma$
denotes the time derivative of $u_\Gamma$ in normal direction, defined by
\begin{equation}
\label{normal-derivative}
 \partial_{t,n}u_\Gamma(t,p):=\frac{d}{d\tau}u_\Gamma(t+\tau,x(t+\tau,p))\big|_{\tau=0}\,,
  \quad t>0,\quad p\in\Gamma(t),
\end{equation}
with
$\{x(t+\tau,p)\in\R^{n}:(\tau,p)\in (-\varepsilon,\varepsilon)\times\Gamma(t)\}$
the flow induced by the normal vector field $(V\nu)$. That is, $[\tau\mapsto x(t+\tau,p)]$ defines for each $p\in\Gamma(t)$
a flow line through $p$ with 
\begin{equation*}
\label{flow-line}
  \quad \frac{d}{d\tau}x(t+\tau,p)=(V\nu)(t+\tau,x(t+\tau,p)),
  \quad x(t+\tau,p)\in\Gamma(t+\tau),
  \quad \tau\in (-\varepsilon,\varepsilon),
\end{equation*}
and $x(t,p)=p$.
The existence of a unique trajectory
$$\{x(t+\tau,p)\in\R^{n}: \tau\in (-\varepsilon,\varepsilon)\},\quad p\in\Gamma(t),$$
with the above properties is not completely obvious,
see for instance \cite{MS99b} for a proof.

We note that the (non-degenerate) equilibria for this problem are the same as those for \eqref{stefan}: the temperature is constant, and the disperse phase $\Omega_1$ consists of finitely many nonintersecting balls of the same radius. We shall prove that such an equilibrium is stable in the state manifold $\cSM$ defined below if $\Omega_1$ is connected and the stability condition introduced in the next section holds. Such an equilibrium will be a local maximum of the total entropy, as we found before in \cite{PSZ10} for the case of constant surface tension.
To the best of our knowledge, there is no mathematical work on thermodynamically consistent
Stefan problems with surface tension depending on the temperature.

\medskip

\noindent
{\bf 3.} \,  The case where undercooling is present is the simpler one, as both equations on the interface are dynamic equations. 
In particular, the Gibbs-Thomson identity
$$ \gamma(u_\Gamma)V-\sigma(u_\Gamma)\cH= \varphi(u_\Gamma)$$
can be understood as a {\bf mean curvature flow} for the evolution of the surface, modified by physics.

If there is no undercooling, it is convenient to eliminate the time derivative of $u_\Gamma$ from the energy balance on the interface. In fact, differentiating the Gibbs-Thomson law w.r.t.\ time $t$ and, with $\lambda(s)=\varphi(s)/\sigma(s)$, we obtain
$$ \lambda^\prime(u_\Gamma)\partial_{t,n}u_\Gamma +\cH^\prime(\Gamma) V=0\quad \mbox{ on } \; \Gamma(t),$$
where $\cH^\prime(\Gamma)={\rm tr}\, L_\Gamma^2 +\Delta_\Gamma$,
with $L_\Gamma$ the Weingarten tensor and $\Delta_\Gamma$ the Laplace-Beltrami operator of $\Gamma$.
(These quantities will be introduced in Section 3).
Hence substitution into surface energy balance yields with
\begin{equation*}
\label{T-Gamma}
T_\Gamma(u_\Gamma) :=\omega_\Gamma(u_\Gamma)-\cH^\prime(\Gamma),
\;
\omega_\Gamma (u_\Gamma):=\lambda^\prime(u_\Gamma)(l(u_\Gamma)-
l_\Gamma(u_\Gamma)\lambda(u_\Gamma))/\kappa_\Gamma(u_\Gamma),
\end{equation*}
the relation
\begin{equation}
\label{T-Gamma-V}
T_\Gamma(u_\Gamma) V=\frac{\lambda^\prime(u_\Gamma)}{\kappa_\Gamma(u_\Gamma)}
\big\{{\rm div}_\Gamma (d_\Gamma(u_\Gamma)\nabla_\Gamma u_\Gamma)+ [\![d(u)\partial_\nu u]\!]\big\}.
\end{equation}
As $V$ should be determined only by the state of the system and should not depend on time derivatives of other variables,
this indicates that the problem without undercooling is not well-posed if the operator $T_\Gamma(u_\Gamma)$
is not invertible in $L_2(\Gamma)$, as $V$ might not be well-defined. On the other hand if $T_\Gamma(u_\Gamma)$ is invertible, then
\begin{equation}
V = [T_\Gamma(u_\Gamma)]^{-1}\frac{\lambda^\prime(u_\Gamma)}{\kappa_\Gamma(u_\Gamma)}
\Big\{{\rm div}_\Gamma (d_\Gamma(u_\Gamma)\nabla_\Gamma u_\Gamma)+
[\![d(u)\partial_\nu u]\!]\Big\}
\end{equation}
uniquely determines the interfacial velocity $V,$ gaining two derivatives in space, and showing that the right hand side of surface energy balance is of lower order. Note that
\begin{equation}
\omega_\Gamma(s) = s\sigma(s)[\lambda^\prime(s)]^2/\kappa_\Gamma(s)\geq 0 \quad \mbox{ in } (0,u_c),
\end{equation}
and $\omega_\Gamma(s)=0$ if and only if $\lambda^\prime(s)=0$. Therefore the well-posedness condition becomes more complex compared to the case $\kappa_\Gamma\equiv0$.

Going one step further, taking the surface gradient of the Gibbs-Thomson relation yields the identity
\begin{equation}
\label{mcflow}
\kappa_\Gamma(u_\Gamma) V - d_\Gamma(u_\Gamma)\cH(\Gamma)
= \kappa_\Gamma(u_\Gamma)\{ f_\Gamma(u_\Gamma)+F_\Gamma(u,u_\Gamma)\},
\end{equation}
as will be shown in Section~6. Here the function $f_\Gamma$ is the antiderivative of $\lambda(d_\Gamma/\kappa_\Gamma)^\prime$ vanishing at $s=u_m$, and
 $F_\Gamma$ is nonlocal in space and of  lower order. So also in the case where undercooling is absent we obtain a
 {\bf mean curvature flow}, modified by physics.
 
Here some remarks about the nature of $T_\Gamma$ are in order.
$T_\Gamma$ is a mathematical quantity which does not seem to allow for a physical interpretation.
In case that $\Gamma$ coincides with an equilibrium of  system \eqref{stefan-var},
invertibility of $T_\Gamma$ is characterized by the conditions 
$l_*\neq 0$ and $\eta_*\neq 1$, where $l_*$ and $\eta_*$ are defined below.
As $T_\Gamma$ contains the term $\Delta_\Gamma$, a second order differential operator
acting on functions defined on $\Gamma$, $T^{-1}_\Gamma$ (and hence also $V$) will gain two 'spacial' derivatives. 
\goodbreak
\medskip

\noindent
{\bf 4.} 
Since we do not impose any structural assumptions on the free energy,
the diffusivity, and the surface tension at $\theta=0$,
is is not possible to show that the temperature $\theta(t)$ remains positive.
It would be an important question to characterize constitutive laws which ensure this property.

On the other side, we can also not ensure that solutions stay bounded away from $u_c$.
Note that the model is not meaningful for $u>u_c$, as the phases are then no longer separated.
This region would correspond to a plasma.

In our model we do not allow for the interface $\Gamma$ to touch the boundary of $\Omega$.
We refer to \cite{BoPr15} for modeling aspects concerning this situation.

\medskip

The plan for this paper is as follows. In Section~2 we discuss some fundamental physical properties of the Stefan problem with variable surface tension. In particular, it is shown that the negative total entropy is a strict Lyapunov functional for the problem, and we characterize and analyze the equiliria of the system. The direct mapping method based on the 
Hanzawa transform, first introduced in \cite{Han81}, is discussed in Section~3. This way the problem is reduced to a quasilinear parabolic problem. In Section~4 we consider the full linearization of the problem at a given equilibrium, and we prove that these are normally hyperbolic, generically. The last two sections deal with the analysis of the nonlinear problem with and without kinetic undercooling. The analysis is based on results for abstract quasilinear parabolic problems, in particular on the generalized principle of linearized stability, see 
\cite{KPW10,PSZ09}. We refer here to \cite{DHP03,Mey10,PrSi04} for information on maximal 
regularity in $L_p$- and weighted $L_p$- spaces,
and to \cite{EPS03,Gu86,Gu88,PSZ10} for more background information concerning the Stefan problem.

\section{Energy, Entropy and Equilibria}
\noindent
{\bf(a)}\,  The {\sf total energy} of the system  \eqref{stefan-var} is given by
\begin{equation}\label{energy-var}
{\sf E}(u,\Gamma) = \int_{\Omega\setminus\Gamma} \epsilon(u)\,dx +
\int_\Gamma \epsilon_\Gamma(u_\Gamma)\,ds,
 \end{equation}
 and by the transport and surface transport theorem we have for smooth solutions
 \begin{align*}
 \frac{d}{dt}{\sf E}(u,\Gamma) &= \int_\Omega \kappa(u)\partial_tu \,dx-\int_\Gamma [\![\epsilon(u)]\!]V\,ds
  + \int_\Gamma\{\kappa_\Gamma(u_\Gamma)\partial_{t,n} u_\Gamma -\epsilon_\Gamma(u_\Gamma) \cH V\}\,ds\\
&= \int_\Gamma \{-[\![d(u)\partial_\nu u]\!]-[\![\epsilon(u)]\!]V +{\rm div}_\Gamma( d_\Gamma(u_\Gamma)\nabla_\Gamma u_\Gamma)\\
&\qquad\;\; +[\![d(u)\partial_\nu u]\!]-(l(u)+l_\Gamma(u_\Gamma)\cH)V+\gamma(u_\Gamma)V^2-\epsilon_\Gamma(u_\Gamma)\cH V\}\,ds\\
&= -\int_\Gamma\{ [\![\psi(u)]\!]+\sigma(u_\Gamma)\cH -\gamma(u_\Gamma)V\}V\,ds=0
 \end{align*}
by the Gibbs-Thomson law, and thus, energy is conserved.
\medskip\\
\noindent
{\bf(b)}\, The {\sf total entropy} of the system, given by
 \begin{equation}\label{entropy-st}
 \Phi(u,\Gamma)= \int_{\Omega\setminus\Gamma} \eta(u)\,dx +\int_\Gamma \eta_\Gamma(u_\Gamma)\,ds,
 \end{equation}
 satisfies
 \begin{align*}
 \frac{d}{dt}\Phi(u,\Gamma)&= \int_\Omega \eta^\prime(u)\partial_t u\,dx +\int_\Gamma\{\partial_{t,n}\eta_\Gamma(u_\Gamma) -([\![\eta(u)]\!]+\eta_\Gamma(u_\Gamma)\cH)V\}\,ds\\
 &= \int_\Omega \frac{1}{u}\kappa(u)\partial_tu\,dx
 +\int_\Gamma\frac{1}{u_\Gamma}\{\kappa_\Gamma(u_\Gamma)\partial_{t,n}u_\Gamma
 +(l(u)+l_\Gamma(u_\Gamma)\cH)V\}\,ds\\
 &=\int_\Omega\frac{1}{u^2} d(u)|\nabla u|^2\,dx \\
 &+ \int_\Gamma\frac{1}{u_\Gamma}
 \{ -[\![d(u)\partial_\nu u]\!]+{\rm div}_\Gamma (d_\Gamma(u_\Gamma)\nabla_\Gamma u_\Gamma)
 +[\![d(u)\partial_\nu u]\!]+\gamma(u_\Gamma)V^2\}\,ds\\
 &=\int_\Omega \frac{1}{u^2}d(u)|\nabla u|^2\, dx + \int_\Gamma\frac{1}{u_\Gamma^2}\{d_\Gamma(u_\Gamma)|\nabla_\Gamma u_\Gamma|^2+ u_\Gamma\gamma(u_\Gamma) V^2\}\,ds\ge 0,
 \end{align*}
 where we employed the transport theorem, the surface transport theorem and \eqref{stefan-var}.
In particular, the negative total entropy is a Lyapunov functional
for problem \eqref{stefan-var}.

\bigskip

\noindent
{\bf(c)}\, Even more,
$-\Phi$ is a strict Lyapunov functional in the sense that it is strictly decreasing along
 smooth solutions which are non-constant in time.
Indeed, if at some time
$t_0\geq0$ we have
 \begin{equation*}
 \frac{d}{dt}\Phi(u(t_0),\Gamma(t_0)) =0,
 \end{equation*}
 then
 \begin{equation*}
 \int_\Omega \frac{1}{u^2}d(u)|\nabla u|^2\, dx+ \int_\Gamma\frac{1}{u_\Gamma^2}d(u)|\nabla u_\Gamma|^2\, ds
 + \int_\Gamma \frac{1}{u_\Gamma}\gamma(u_\Gamma) V^2\,ds = 0,
 \end{equation*}
hence $\nabla u(t_0)=0$
in $\Omega$  and $\gamma(u_\Gamma(t_0))V(t_0)=0$ on $\Gamma(t_0)$. This implies $u(t_0)=const=u_\Gamma(t_0)$ in $\Omega$, and $\cH(t_0)=-[\![\psi(u(t_0))]\!]/\sigma(u_\Gamma(t_0))=const$,
provided we have
\begin{equation}\label{sigma}
[\![\psi(s)]\!]=0\quad \Rightarrow \quad \sigma(s)>0.
\end{equation}
Physically, this assumption is plausible, as it means that at melting temperature $u_m>0$ (defined as the {\em unique} positive zero of  the function $\varphi(s):=[\![\psi(s)]\!]$) the surface tension $\sigma(u_m)$ is positive.
 Since $\Omega$ is bounded, we may conclude that $\Gamma(t_0)$ is a union of finitely many, say $m$, disjoint spheres of equal radius,
i.e.\ $(u(t_0),\Gamma(t_0))$ is an equilibrium.
Therefore, the {\em $\omega$ limit set} of solutions 
{\em within}
the state manifold defined below are contained in the $(mn+1)$-dimensional manifold of equilibria
\begin{equation}
\label{equilibria-I}
\begin{aligned}
\cE&=\big\{\big(u_\ast,\bigcup_{1\le l\le m}S_{R_\ast}(x_l)\big):
0<u_\ast<u_c,\ [\![\psi(u_\ast)]\!]=(n-1)\sigma(u_*)/R_*,\\
&\hspace{4cm}\bar B_{R_\ast}(x_l)\subset \Omega,\, \bar{B}_{R_\ast}(x_l)\cap \bar{B}_{R_\ast}(x_k)=\emptyset,\
 k\neq l\big\},
\end{aligned}
\end{equation}
where $S_{R_\ast}(x_l)$ denotes the sphere with radius $R_\ast$ and center $x_l$.

\medskip

\noindent
{\bf(d)}\, Another interesting observation is the following. Consider the
critical points of the functional $\Phi(u,u_\Gamma,\Gamma)$ with constraint
${\sf E}(u,u_\Gamma,\Gamma)={\sf E}_0$, say on
$$U:=\{(u,u_\Gamma,\Gamma):\; u\in C(\bar{\Omega}\setminus\Gamma),\;\;
\Gamma\in \cMH^2(\Omega),\;\; u_\Gamma\in C(\Gamma),\;\; u,u_\Gamma>0\},$$
 see below for the definition of
$\cMH^2(\Omega)$. So here we do not assume from the beginning that $u$ is continuous across $\Gamma$,
and $u_\Gamma$ denotes surface temperature.
Then by the method of Lagrange multipliers, there
is $\mu\in\R$ such that at a critical point $(u_*,u_{\Gamma*},\Gamma_*)$ we have
\begin{equation}
\label{VarEq} \Phi^\prime(u_*,u_{\Gamma*},\Gamma_*)+\mu {\sf E}^\prime(u_*,u_{\Gamma*},\Gamma_*)=0.
\end{equation}
The derivatives of the functionals are given by
\begin{equation*}
\langle \Phi^\prime(u,u_\Gamma,\Gamma) |(v,v_\Gamma,h)\rangle
= (\eta^\prime(u)|v)_\Omega + (\eta^\prime_\Gamma(u_\Gamma)|v_\Gamma)_\Gamma -([\![\eta(u)]\!]+\eta_\Gamma(u_\Gamma)\cH(\Gamma)|h)_{\Gamma},
\end{equation*}
and
$$\langle {\sf E}^\prime(u,u_\Gamma,\Gamma) |(v,v_\Gamma,h)\rangle = (\epsilon^\prime(u)|v)_\Omega
+(\epsilon^\prime_\Gamma(u_\Gamma)|v_\Gamma)_\Gamma -([\![\epsilon(u)]\!]+\epsilon_\Gamma(u_\Gamma) \cH(\Gamma)|h)_{\Gamma}.$$
Setting first $v_\Gamma=h=0$ and varying $v$ in \eqref{VarEq} we obtain
$$\eta^\prime(u_*) + \mu \epsilon^\prime(u_*)=0\quad \mbox{ in } \Omega,$$
 varying $v_\Gamma$ yields
$$\eta_\Gamma^\prime(u_{\Gamma*}) +\mu \epsilon_\Gamma^\prime(u_{\Gamma*})=0\text{ on $\Gamma_*$},$$
and finally
varying $h$ we get
$$[\![\eta(u_*)]\!] +\eta_\Gamma(u_{\Gamma*})\cH(\Gamma)+\mu([\![\epsilon(u_*)]\!]+\epsilon_\Gamma(u_{\Gamma*}) \cH(\Gamma_*))=0\text{ on $\Gamma_*$}.$$
The relations $\eta(u)=-\psi^\prime(u)$ and
$\epsilon(u)=\psi(u)-u\psi^\prime(u)$ imply
$0=-\psi^{\prime\prime}(u_*)(1+\mu u_*)$, and this shows that
$u_*=-1/\mu$ is constant in $\Omega$, since
$\kappa(u)=-u\psi^{\prime\prime}(u)>0$ for all $u>0$ by assumption.
Similarly on $\Gamma_*$ we obtain that $u_{\Gamma*}=-1/\mu$ is constant as well, 
provided $\kappa_\Gamma(u_\Gamma)>0$, hence in particular $u_*\equiv u_{\Gamma*}$.
This further implies the Gibbs-Thomson relation $[\![\psi(u_*)]\!]+\sigma(u_*) \cH(\Gamma_*)=0$.
 Since $u_*$ is constant we see
that $\cH(\Gamma_*)$ is constant, by \eqref{sigma}.
Therefore $\Gamma_*$ is a sphere
whenever connected, and a union of finitely many disjoint spheres of
equal size otherwise. Thus the critical points of the entropy
functional for prescribed energy are precisely the equilibria of
problem \eqref{stefan-var}.

\bigskip

\noindent
{\bf(e)}\, Going further, suppose we have an equilibrium $e_*:=(u_*,u_{\Gamma*},\Gamma_*)$
where the total entropy has a local maximum w.r.t.\ the constraint
${\sf E}={\sf E}_0$ constant.
Then $\cD_*:=[\Phi+\mu {\sf E}]^{\prime\prime}(e_*)$ is negative semi-definite on the kernel of
${\sf E}^\prime(e_*)$, where $\mu=-1/u_*$
 is the fixed Lagrange
multiplier found above. The kernel of ${\sf E}^\prime(e)$ is given
by the identity
\begin{equation*}
(\kappa(u)| v)_\Omega + (\kappa_\Gamma(u_\Gamma)|v_\Gamma)_\Gamma-([\![\epsilon(u)]\!] + \epsilon_\Gamma(u_\Gamma) \cH(\Gamma)| h)_\Gamma =0,
\end{equation*}
which at equilibrium yields
\begin{equation}
\label{kE}
(\kappa_\ast| v)_\Omega + (\kappa_{\Gamma*}|v_\Gamma)_\Gamma+ u_*(l_\ast| h)_\Gamma=0,
\end{equation}
where $\kappa_\ast :=\kappa(u_\ast)$, $\kappa_{\Gamma*}:=\kappa_{\Gamma}(u_*)$ and
\begin{equation}
\label{l-stern}
l_*:=\frac{1}{u_*}\big\{l(u_*)+l_{\Gamma}(u_*)\cH(\Gamma_*)\big\}
=[\![\psi^\prime(u_*)]\!]+\sigma^\prime(u_*)\cH(\Gamma_*). 
\end{equation}
On the other hand, a straightforward calculation yields with
$z=(v,v_\Gamma,h)$
\begin{align}
\label{2var}
-\langle \cD_* z|z\rangle &=\frac{1}{u_\ast^2}\big[ (\kappa_\ast v|v)_\Omega + (\kappa_{\Gamma*}v_\Gamma|v_\Gamma)_\Gamma
- \sigma_* u_*( \cH^\prime(\Gamma_*) h|h)_\Gamma\big],
\end{align}
where $\kappa_{\Gamma*}=\kappa_{\Gamma}(u_*)$ and $\sigma_*=\sigma(u_*)$.
As  $\kappa_\ast$ and $\kappa_{\Gamma*}$ are positive, we see that the form
$\langle \cD z|z\rangle$ is negative semi-definite  as soon as
$\cH^\prime ({\Gamma_*}) $ is negative semi-definite.
 We have
\begin{equation*}
\cH^\prime(\Gamma_\ast) = (n-1)/{R^2_\ast} + \Delta_\ast,
\end{equation*}
where $\Delta_\ast$ denotes the
Laplace-Beltrami operator on $\Gamma_\ast$, and $R_\ast$ means the radius of
an equilibrium sphere. To derive necessary conditions for an equilibrium $e_*$ to be a
local maximum of entropy, we consider two cases.
\smallskip\\
\noindent {1.} Suppose that $\Gamma_\ast$ is not connected,
i.e. $\Gamma_\ast$ is a finite union of spheres $\Gamma^k_\ast$.
Set $v=v_\Gamma=0$, and let $h=h_k$ be
constant on $\Gamma^k_\ast$
with $\sum_k h_k=0$.
Then the constraint \eqref{kE} holds,
and with $\omega_n$ the surface area of the unit sphere in $\R^n$
\begin{equation*}
\langle \cD z|z\rangle= (\sigma_*u_\ast)((n-1)/R^2_\ast)\omega_n R^{n-1}_* \,\sum_{k=1}^m h_k^2 >0,\
\end{equation*}
hence $\cD$ cannot be negative semi-definite in this case, as $\sigma_*>0$ by \eqref{sigma}. Thus if $e_\ast$
is an equilibrium with maximal total entropy, then $\Gamma_\ast$ must be
connected, and hence both phases are connected.
\smallskip\\
\noindent 2. Assume that $\Gamma_\ast$ is connected. With
$h= -(\kappa_*|1)_\Omega-\kappa_{\Gamma*}|\Gamma_*|$,
$v=v_\Gamma=u_*l_*|\Gamma_*|$
 we see that
 $\cD$ negative semi-definite on the kernel of ${\sf E}^\prime(e_\ast)$ implies
the condition
\begin{equation}
\label{var-sc}
\zeta_\ast:=\zeta(u_*):=\frac{(n-1)\sigma_* [(\kappa_\ast|1)_\Omega+\kappa_{\Gamma*}|\Gamma_*|]}{u_*l^2_\ast R^2_\ast |\Gamma_\ast|}\le 1.
\end{equation}
We will see below that connectedness of $\Gamma_*$ and the {\em strong stability condition} $\zeta_*<1$ are sufficient for stability of the equilibrium $e_*$.

We point out that the quantity $\zeta_\ast$ defined in \eqref{var-sc}
coincides with the analog quantity in \cite[Defintion (1.11)]{PSZ10}
in case $\kappa_{\Gamma*}=0$ and $\sigma=$ constant.
(Note that $l_\ast=l(u_*)/u_*$ in this case, which differs from the definition of $l_\ast$
in \cite{PSZ10}).
\medskip

\noindent
{\bf(f)}\, Summarizing, we have shown
\begin{itemize}
\item The total energy is constant along smooth solutions of \eqref{stefan-var}.
\item The negative total entropy is a strict Ljapunov functional for \eqref{stefan-var}.
\item The equilibria of \eqref{stefan-var} are precisely the critical points
of the entropy functional with prescribed energy.
\item
If the entropy functional with prescribed energy has a local maximum at $e_*=(u_*,u_{\Gamma*},\Gamma_*)$
then $\Gamma_\ast$ is connected.
\item
If $\Gamma_\ast$ is connected, a necessary condition
for a critical point $(u_\ast,u_{\Gamma*},\Gamma_\ast)$ to be a local maximum of
the entropy functional with prescribed energy is inequality \eqref{var-sc}.
\end{itemize}
\medskip

\noindent
{\bf (g)}\,
We would like to point out a phenomenon, in the absence of kinetic undercooling, which is to positive surface heat capacity $\kappa_\Gamma$. If $\kappa_\Gamma$ at an equilibrium 
$(u_\ast,u_{\Gamma*},\Gamma_\ast)$ is large enough and $\Gamma_*$ is disconnected,
then such a steady state is stable, see Theorem~\ref{lin-stability} and 
Theorem~\ref{nonlinstab0}.
Hence, this case seems to prevent the onset of Ostwald ripening.
However, such equilibria cannot be maxima of the total entropy. 

	This is in strict contrast to the situation where the surface tension $\sigma$ is constant.
	In this case it is shown in \cite{PSZ10} that
	multiple spheres (of the same radius) are always unstable for \eqref{stefan}.
	This situation is reminiscent of the onset of {\sl Ostwald ripening},
	a process that manifests itself in the way that larger structures grow 
	while smaller ones shrink and disappear.
	Here we refer to  
	\cite{AlFu03, AlFuKa03,AlFuKa04,AlFuKa04b,GORS09,HNO05},
	\cite{Niet99}-\cite{NietVe04} and the references therein
	for various aspects and results on Ostwald ripening.
	In particular, we mention that the authors in 
	 \cite{AlFu03, AlFuKa03, AlFuKa04,AlFuKa04b} use the quasi-stationary Stefan problem
	with surface tension (i.e., the Mullins-Sekerka problem) to model Ostwald ripening.
	Under proper scaling assumptions, the way
	sphere-like particles evolve is analyzed.
	Interesting and illuminating connections between various versions 
	of the Stefan problem (mostly the Mullins-Sekerka problem)
	and Ostwald ripening are given in 
	\cite{HNO05,Niet99,Niet00,Niet01b,NietVe04}.
	It would be of considerable interest to also pursue the effect of coarsening in
	the framework of the thermodynamically consistent Stefan problem \eqref{stefan} and 
	\eqref{stefan-var}

	In case $\Gamma_*$ is connected we show that an equilibrium  
	 is stable if $\zeta_*<1$, and unstable 
	if $\zeta_*>1$. This situation is in accordance with the results found in \cite{PSZ10}
	for the case of constant surface tension. 
	Here we mention that stability of a connected equilibrium for the Stefan problem with constant surface tension 
	has also been obtained in \cite{Ha12}. 
 We refer to the introduction of \cite{PSZ10} for a detailed discussion of the literature. 	 
\medskip

\noindent
{\bf(h)}\, Now let us look at the energy of an equilibrium as a function of temperature. Suppose we have an equilibrium $(u,\Gamma)$ at a given energy level ${\sf E_0}$, and assume that $\Gamma$ consists of $m$ disjoint spheres of radius $R$ contained in $\Omega$. Then
$$0<R<R_m:=\sup\{ R>0: \, \Omega \mbox{ contains $m$ disjoint ball of radius } R\},$$
and with $\varphi(u):=[\![\psi(u)]\!]$ we have
$$ 0=\varphi(u)+\sigma(u)\cH(\Gamma)= \varphi(u) -(n-1)\sigma(u)/R,$$
and hence
$ R=R(u)= (n-1)\sigma(u)/\varphi(u).$
Further we have
\begin{align*}
{\sf E}_e(u)&:= {\sf E}(u,\Gamma)= \int_\Gamma \epsilon(u)\,dx +\int_\Gamma \epsilon_\Gamma(u)\,ds\\
&= \epsilon_2(u)|\Omega|- |\Omega_1|[\![\epsilon(u)]\!] + \epsilon_\Gamma(u)|\Gamma|\\
&= \epsilon_2(u)|\Omega| - (m\omega_n/n) R(u)^n[\![\epsilon(u)]\!]+m\omega_n R(u)^{n-1}\epsilon_\Gamma(u)\\
& =\epsilon_2(u)|\Omega|+c_{n,m}\Big[\frac{\sigma(u)^n}{\varphi(u)^{n-1}} - u\frac{d}{du}\frac{\sigma(u)^n}{\varphi(u)^{n-1}}\Big],
\end{align*}
where $c_{n,m}= m\omega_n(n-1)^{n-1}/n.$ Thus we obtain for the total energy of an equilibrium
\begin{equation}\label{eq-energy}
{\sf E}_e(u)= \delta(u)-u\delta^\prime(u),\quad \delta(u) =|\Omega|\psi_2(u)+c_{n,m}\frac{\sigma(u)^n}{\varphi(u)^{n-1}}.
\end{equation}
Consequently, the equilibrium temperature for an equilibrium, where $\Gamma$ consists of $m$ components, is the solution of the scalar problem
$${\sf E}_0 = {\sf E}_e(u) = \delta(u)-u\delta^\prime(u),\quad 0<u<u_c,
\quad 0<\sigma(u)/\varphi(u)<R_m/(n-1).$$
Let us look at the derivative of the function ${\sf E}_e(u)$. A simple calculation yields
\begin{equation*}
\begin{aligned}
{\sf E}^\prime_e(u)&=-u\delta^{\prime\prime}(u)=-|\Omega|u\psi_2^{\prime\prime}(u)
-c_{n,m}u\frac{d}{du}\big[n\left(\sigma/\varphi\right)^{n-1}\sigma^\prime
-(n-1)(\sigma/\varphi)^n\varphi^\prime\big]\\
&= |\Omega|\kappa_2(u)-c_{n,m}u\big[n(\sigma/\varphi)^{n-1}\sigma^{\prime\prime}-(n-1)(\sigma/\varphi)^n\varphi^{\prime\prime}\big]\\
&\quad -c_{n,m}n(n-1)(\sigma/\varphi)^{n-1}u\big[(\sigma^\prime)^2/\sigma -
2\sigma^\prime \varphi^\prime/\varphi+ \sigma (\varphi^\prime)^2/\varphi^2\big]\\
&= |\Omega|\kappa_2(u)+ |\Gamma|\kappa_\Gamma(u) -|\Omega_1|[\![\kappa(u)]\!]
- ({R^2|\Gamma|}/{(n\!-\!1)\sigma})u
 \big[ \varphi^\prime - \sigma^\prime \varphi/\sigma\big]^2\\
&=\big[(\kappa(u)|1)_\Omega+|\Gamma|\kappa_\Gamma(u)\big]
\!-\!
{(R^2(u)|\Gamma|}/{(n\!-\!1)\sigma(u)}) u\big[\,[\![\psi^\prime(u)]\!]+\sigma^\prime(u)\cH(\Gamma)\big]^2.
\end{aligned}
\end{equation*}
Therefore the stability condition $\zeta(u)\leq 1$ is equivalent to ${\sf E}^\prime_e(u)\leq0$, an alternative interpretation to the one obtained above.
\section{Transformation to a Fixed Interface}
Let $\Omega\subset\R^n$ be a bounded domain with boundary $\partial
\Omega$ of class $C^2$, and suppose $\Gamma\subset \Omega$ is a
closed hypersurface of class $C^2$, i.e.\ a $C^2$-manifold which is
the boundary of a bounded domain $\Omega_1\subset \Omega$. We then
set $\Omega_2=\Omega\setminus\bar{\Omega}_1$. Note that while
$\Omega_2$ typically is connected, $\Omega_1$ may be  disconnected. However,
$\Omega_1$ consists of finitely many components only, as $\partial
\Omega_1=\Gamma$ by assumption is a manifold, at least  of class
$C^2$. 
In the following, 
we refer to \cite{PrSi13, PrSi15} for a thorough development of the
material presented below.
Recall that the {\em second order bundle} of $\Gamma$ is
given by
$$\cN^2\Gamma:=\{(p,\nu_\Gamma(p),L_\Gamma(p)):\, p\in\Gamma\}.$$
Note that the Weingarten map $L_\Gamma$ (also called the shape operator,
or the second fundamental tensor) is defined by
$$ L_\Gamma(p) = -\nabla_\Gamma \nu_\Gamma(p),\quad p\in\Gamma ,$$
where $\nabla_\Gamma$ denotes the surface gradient on $\Gamma$.
The eigenvalues $\kappa_j(p)$ of $L_\Gamma(p)$ are the principal curvatures of $\Gamma$ at $p\in\Gamma$, and we have
$|L_\Gamma(p)|=\max_j|\kappa_j(p)|.$
The {\em curvature} $\cH_\Gamma(p)$ is defined by
$$\cH_\Gamma(p) = \sum_{j=1}^{n-1} \kappa_j(p)={\rm tr} L_\Gamma(p) = -{\rm div}_\Gamma \nu_\Gamma(p),$$
where ${\rm div}_\Gamma$ means surface divergence. Recall also that the
{\em Hausdorff distance} $d_H$ between the two closed subsets $A,B\subset\R^m$
is defined by
$$d_H(A,B):= \max\big\{\sup_{a\in A}{\rm dist}(a,B),\sup_{b\in B}{\rm dist}(b,A)\big\}.$$
Then we may approximate $\Gamma$ by a real analytic hypersurface $\Sigma$ (or merely $\Sigma\in C^3$),
in the sense that the Hausdorff distance of the second order bundles of
$\Gamma$ and $\Sigma$ is as small as we want. More precisely, for each $\eta>0$ there is a real analytic closed
hypersurface such that
$d_H(\cN^2\Sigma,\cN^2\Gamma)\leq\eta$. If $\eta>0$ is small enough, then $\Sigma$
bounds a domain $\Omega_1^\Sigma$ with  $\overline{\Omega^\Sigma_1}\subset\Omega$,
and we set $\Omega^\Sigma_2=\Omega\setminus\bar{\Omega}^\Sigma_1$.

It is well known that such a hypersurface $\Sigma$ admits a tubular neighborhood,
which means that there is $a>0$ such that the map
\begin{eqnarray*}
&&\Lambda:\, \Sigma \times (-a,a)\to \R^n \\
&&\Lambda(p,r):= p+r\nu_\Sigma(p)
\end{eqnarray*}
is a diffeomorphism from $\Sigma \times (-a,a)$
onto $\cR(\Lambda)$. The inverse
$$\Lambda^{-1}:\cR(\Lambda)\mapsto \Sigma\times (-a,a)$$ of this map
is conveniently decomposed as
$$\Lambda^{-1}(x)=(\Pi_\Sigma(x),d_\Sigma(x)),\quad x\in\cR(\Lambda).$$
Here $\Pi_\Sigma(x)$ means the nonlinear orthogonal projection of $x$ to $\Sigma$ and $d_\Sigma(x)$ the signed
distance from $x$ to $\Sigma$; so $|d_\Sigma(x)|={\rm dist}(x,\Sigma)$ and $d_\Sigma(x)<0$ iff
$x\in \Omega_1^\Sigma$. In particular we have $\cR(\Lambda)=\{x\in \R^n:\, {\rm dist}(x,\Sigma)<a\}$.

On the one hand, $a$ is determined by the curvatures of $\Sigma$, i.e.\ we must have
$$0<a<\min\big\{1/|\kappa_j(p)|: j=1,\ldots,n-1,\; p\in\Sigma\big\},$$
where $\kappa_j(p)$ mean the principal curvatures of $\Sigma$ at $p\in\Sigma$.
But on the other hand, $a$ is also connected to the topology of $\Sigma$,
which can be expressed as follows. Since $\Sigma$ is a compact (smooth) manifold of
dimension $n-1$ it satisfies a (interior and exterior) ball condition, which means that
there is a radius $r_\Sigma>0$ such that for each point $p\in \Sigma$
there are $x_j\in \Omega_j^\Sigma$, $j=1,2$, such that $B_{r_\Sigma}(x_j)\subset \Omega_j^\Sigma$, and
$\bar{B}_{r_\Sigma}(x_j)\cap\Sigma=\{p\}$. Choosing $r_\Sigma$ maximal,
we then must also have $a<r_\Sigma$.
In the sequel we fix
$$ a= \frac{1}{2}\min\left\{r_\Sigma, \frac{1}{|\kappa_j(p)|}, \, j=1,\ldots, n-1,\; p\in\Sigma\right\}.$$
For later use we note that the derivatives of $\Pi_\Sigma(x)$ and $d_\Sigma(x)$ are given by
$$\nabla d_\Sigma(x)= \nu_\Sigma(\Pi_\Sigma(x)),
\quad \Pi_\Sigma^\prime(x) = M_0(d_\Sigma(x),\Pi(x))P_\Sigma(\Pi_\Sigma(x))$$
for $|d_\Sigma(x)|<a$,
where $P_\Sigma(p)=I-\nu_\Sigma(p)\otimes\nu_\Sigma(p)$ denotes the orthogonal projection onto
the tangent space $T_p\Sigma$ of $\Sigma$ at $p\in\Sigma$, and
\begin{equation}
\label{M-0}
M_0(r)(p)=(I-r L_\Sigma(p))^{-1},\quad (r,p)\in (-a,a)\times\Sigma.
\end{equation}
Note that $$|M_0(r)(p)|\leq 1/(1-r|L_\Sigma(p)|)
\leq 2,\quad \mbox{ for all } (r,p)\in (-a,a)\times\Sigma.$$
Setting $\Gamma=\Gamma(t)$, we may use the map $\Lambda$ to parameterize the unknown free
boundary $\Gamma(t)$ over $\Sigma$ by means of a height function $h(t,p)$ via
$$\Gamma(t)=\{ p+ h(t,p)\nu_\Sigma(p): p\in\Sigma,\; t\geq0\},$$
at least for small $|h|_\infty$.
Extend this diffeomorphism to all of $\bar{\Omega}$ by means of
$$ \Xi_h(t,x) = x +\chi(d_\Sigma(x)/a)h(t,\Pi_\Sigma(x))\nu_\Sigma(\Pi_\Sigma(x))=:x+\xi_h(t,x).$$
Here $\chi$ denotes a suitable cut-off function. More precisely, $\chi\in\cD(\R)$,
$0\leq\chi\leq 1$, $\chi(r)=1$ for $|r|<1/3$, and $\chi(r)=0$ for $|r|>2/3$.
 Note that $\Xi_h(t,x)=x$ for $|d(x)|>2a/3$, and
$$\Xi_h^{-1}(t,x)= x-h(t,x)\nu_\Sigma(x),\quad x\in\Sigma,$$
for $|h|_\infty$ sufficiently small.

Setting
\begin{equation*}
v(t,x)=u(t,\Xi_\rho(t,x))\quad\text{or}\quad u(t,x)= v(t,\Xi_\rho^{-1}(t,x))
\end{equation*}
we have this way transformed the
time varying regions $\Omega\setminus \Gamma(t)$ to the fixed domain
$\Omega\setminus\Sigma$. This is the direct mapping method, also
called Hanzawa transformation.

By means of this transformation, we obtain the following transformed
problem:
\begin{equation}
\label{transformed}
\left\{\begin{aligned}
\kappa(v)\partial_t v +\cA(v,\rho)v&=\kappa(v)\cR(\rho)v
         &&\text{in}&&\Omega\setminus\Sigma\\
\partial_{\nu} v&=0 &&\text{on}&&\partial \Omega\\
 [\![v]\!]=0, \quad v_\Gamma &=v&&\text{on}&&\Sigma \\
[\![\psi(v_\Gamma) ]\!] + \sigma(v_\Gamma) \cH(\rho)- \gamma(v_\Gamma)\beta(\rho)\partial_t\rho&=0
  &&\text{on}&&\Sigma\\
\kappa_\Gamma(v_\Gamma)\partial_t v_\Gamma+\cC(v_\Gamma,\rho)v_\Gamma+\cB(v,\rho)v
&= &&\\
=-\{l(v_\Gamma)+l_\Gamma(v_\Gamma)\cH(\rho)&-\gamma(v_\Gamma)\beta(\rho)\partial_t\rho\}\beta(\rho)\partial_t\rho
 &&\text{on}&&\Sigma\\
v(0)=v_0,\ \rho(0)&=\rho_0.&&
\end{aligned}\right.
\end{equation}
Here $\cA(v,\rho)$, $\cB(v,\rho)$ and $\cC(v_\Gamma,\rho)$ denote the transformed versions of the
operators $-{\rm div}(d\nabla )$,
$-[\![d\partial_\nu]\!]$, and $-{\rm div}_\Gamma(d_\Gamma\nabla_\Gamma)$, respectively. Moreover, $\cH(\rho)$ means the mean curvature of $\Gamma$,
$\beta(\rho)=(\nu_\Sigma|\nu_\Gamma(\rho))$, the term $\beta(\rho)\partial_t\rho$ represents the normal velocity $V$,
and
$$\cR(\rho)v=\partial_t v -\partial_tu\circ \Xi_\rho.$$
The system (\ref{transformed}) is a quasi-linear parabolic problem
on the domain $\Omega$  with fixed interface $\Sigma\subset \Omega$
with {\em dynamic boundary conditions}.

To elaborate on the structure of this problem in more detail, we calculate
$$D\Xi_\rho = I + D\xi_\rho, \quad \quad [D\Xi_\rho]^{-1} = I - {[I + D\xi_\rho]}^{-1}D\xi_\rho
=:I-M_1(\rho)^{\sf T}.$$
where $D$ deontes the derivative with respect to the space variables.
Hence $D\xi_\rho=0$ for $|d_\Sigma(x)|>2a/3$ and
\begin{align*}
D\xi_\rho(t,x)&= \frac{1}{a}\chi'(d_\Sigma(x)/a)\rho(t,\Pi_\Sigma(x))\nu_\Sigma(\Pi_\Sigma(x))\otimes\nu_\Sigma(\Pi_\Sigma(x))\\
& + \chi(d_\Sigma(x)/a)[\nu_\Sigma(\Pi_\Sigma(x))\otimes M_0(d_\Sigma(x))\nabla_\Sigma \rho(t,\Pi_\Sigma(x))]\\
&-\chi(d_\Sigma(x)/a)\rho(t,\Pi_\Sigma(x))L_\Sigma(\Pi_\Sigma(x))M_0(d_\Sigma(x))P_\Sigma(\Pi_\Sigma(x))
\end{align*}
for $0\leq|d_\Sigma(x)|\leq 2a/3$.
In particular, for $x\in \Sigma$ we have
$$D\xi_\rho(t,x)=\nu_\Sigma(x)\otimes\nabla_\Sigma\rho(t,x) -\rho(t,x)L_\Sigma(x)P_\Sigma(x), $$
and
$$[D\xi_\rho]^{\sf T}(t,x)=\nabla_\Sigma\rho(t,x)\otimes\nu_\Sigma(x) -\rho(t,x)L_\Sigma(x), $$
since $L_\Sigma(x)$ is symmetric and has range in $T_x\Sigma$.
Therefore, $[I + D\xi_\rho]$ is boundedly invertible, if $\rho$ and $\nabla_\Sigma \rho$
are sufficiently small, and
\begin{equation*}
\label{hanzawainv}
|{[I + D\xi_\rho]}^{-1}| \leq 2 \quad \mbox{ for }\;
{|\rho|}_\infty \leq \frac{1}{4 ({|\chi'|}_\infty/a+ 2\max_j |\kappa_j|)},
\quad {|\nabla_\Sigma \rho|}_\infty \leq \frac{1}{8}.
\end{equation*}
Employing this notation we obtain
\begin{align*}
\nabla u\circ\Xi_\rho
=  ([D\Xi_\rho^{-1})]^{{\sf T}}\circ\Xi_\rho)\nabla v
=[D\Xi_\rho]^{-1,{\sf T}}\nabla v
= :(I - M_1(\rho))\nabla v,
\end{align*}
and for a vector field $q= \bar{q}\circ \Xi_\rho$
\begin{align*}
(\nabla|\bar{q})\circ\Xi_\rho
=  (([D\Xi_\rho^{-1}]^{{\sf T}}\circ\Xi_\rho)\nabla|q)
=  ([D\Xi_\rho]^{-1,{\sf T}}\nabla|q)
=  ((I - M_1(\rho))\nabla|q).
\end{align*}
Further we have
\begin{align*}
\partial_t u\circ\Xi_\rho
&=  \partial_t v -(\nabla u\circ \Xi_\rho|\partial_t\Xi_\rho)
 =  \partial_t v -( (D\Xi_\rho^{-1}]^{\sf T}\circ\Xi_\rho)\nabla v|\partial_t\Xi_\rho) \\
&=  \partial_t v -([D\Xi_\rho]^{-1,{\sf T}}\nabla  v|\partial_t\xi_\rho )
 =  \partial_t v -(\nabla  v|(I-M_1^{\sf T}(\rho))\partial_t\xi_\rho )
,
\end{align*}
hence
$$\cR(\rho)v=(\nabla  v|(I-M_1^{\sf T}(\rho))\partial_t\xi_\rho ).$$
The normal time derivative transforms as
\begin{equation*}
 \partial_{t,n}u_\Gamma\circ\Xi_\rho =\partial_t v_\Gamma + (\nabla_\Sigma v_\Gamma|\nu_\Sigma)V =\partial_t v_\Gamma,
 \end{equation*}
as $\nabla_\Sigma v_\Gamma$ is perpendicular to $\nu_\Sigma$.
\\
With the Weingarten tensor $L_\Sigma=-\nabla_\Sigma\nu_\Sigma$  we obtain
\begin{equation*}
\begin{aligned}
\nu_\Gamma(\rho)&= \beta(\rho)(\nu_\Sigma-\alpha(\rho)),&& \alpha(\rho)= M_0(\rho)\nabla_\Sigma \rho,\\
M_0(\rho)&=(I-\rho L_\Sigma)^{-1},&& \beta(\rho) = (1+|\alpha(\rho)|^2)^{-1/2},
\end{aligned}
\end{equation*}
and
$$V=(\partial_t\Xi_\rho|\nu_\Gamma) = (\nu_\Sigma|\nu_\Gamma(\rho))\partial_t \rho
=\beta(\rho)\partial_t \rho.$$
For the mean curvature $\cH(\rho)=\cH(\Gamma_\rho)$ we have
$$ \cH(\rho) = \beta(\rho)\{ {\rm tr} [M_0(\rho)(L_\Sigma+\nabla_\Sigma \alpha(\rho))]
-\beta^2(\rho)(M_0(\rho)\alpha(\rho)|[\nabla_\Sigma\alpha(\rho)]\alpha(\rho))\},$$
an expression involving second order derivatives of $\rho$ only linearly. More precisely,
\begin{align*}
\cH(\rho)&=\beta(\rho)\cG(\rho):\nabla_\Sigma^2\rho + \beta(\rho)\cF(\rho),\\
\cG(\rho)&= M_0^2(\rho)-\beta^2(\rho)M_0(\rho)\nabla_\Sigma\rho\otimes M_0(\rho)\nabla_\Sigma\rho.
\end{align*}
Note that $\beta$ as well as $\cF$ and $\cG$ only depend on $\rho$ and $\nabla_\Sigma\rho$.
The linearization of the curvature $\cH(\rho)=\cH(\Gamma_\rho)$ is given by
\begin{equation}
\label{lin-curvature}
\cH^\prime(0)= {\rm tr}\, L_\Sigma^2 +\Delta_\Sigma.
\end{equation}
Here $\Delta_\Sigma$ denotes the Laplace-Beltrami operator on $\Sigma$.
 $\cB(v,\rho)$ becomes
\begin{align*}
\cB(v,\rho)v&= -[\![d(u)\partial_\nu u]\!]\circ\Xi_\rho=-([\![d(v)(I-M_1(\rho))\nabla v]\!]|\nu_\Gamma)\\
&= -\beta(\rho)([\![d(v)(I-M_1(\rho))\nabla v]\!]|\nu_\Sigma -\alpha(\rho))\\
&= -\beta(\rho)[\![d(v)\partial_{\nu_\Sigma} v]\!]
+\beta(\rho)([\![d(v)\nabla v]\!]|(I-M_1(\rho))^{\sf
T}\alpha(\rho)),
\end{align*}
since $M_1^{\sf T}(\rho)\nu_\Sigma = 0$, and
\begin{align*}
\cA(v,\rho)v= & -{\rm div}( d(u)\nabla u)\circ\Xi_\rho= -((I-M_1(\rho))\nabla|d(v)(I-M_1(\rho))\nabla v)\\
= & -d(v)\Delta v + d(v)[M_1(\rho)+M_1^{\sf T}(\rho)-M_1(\rho)M_1^{\sf T}(\rho)]:\nabla^2 v\\
&-d^\prime(v)|(I-M_1(\rho))\nabla v|^2
 + d(v)((I-M_1(\rho)):\nabla M_1(\rho)|\nabla v).
\end{align*}
We recall that for matrices  $A,B\in\R^{n\times n}$, $
A:B=\sum_{i,j=1}^n a_{ij}b_{ij}=\text{tr}\,(AB^{\sf T}) $ denotes
their inner product.
The pull back of $\nabla_\Gamma$ is given by
$$ \nabla_\Gamma \varphi\circ \Xi_\rho = P_\Gamma(\rho) M_0(\rho)\nabla_\Sigma \varphi,$$
where
$$P_\Gamma(\rho) = I -\nu_\Gamma(\rho)\otimes\nu_\Gamma(\rho).$$
This implies  for $\cC(v_\Gamma,\rho)v_\Gamma$ the relation
$$\cC(v_\Gamma,\rho)v_\Gamma=-{\rm tr}\{P_\Gamma(\rho)M_0(\rho)\nabla_\Sigma\big(d_\Gamma(v_\Gamma)P_\Gamma(\rho)M_0(\rho)\nabla_\Sigma v_\Gamma\big)\}.$$
It is easy to see that the leading part of $\cA(v,\rho)v$ is $-d(v)\Delta v$,
while that of $\cB(v,\rho)v$ is
$-\beta(\rho)[\![d(v)\partial_{\nu} v]\!]$,
and the leading part of $\cC(v_\Gamma,\rho)v_\Gamma$ turns out to be 
$-d_\Gamma(v_\Gamma)\Delta_\Sigma v_\Gamma$.
This follows from $M_0(0)=1$, $P_\Gamma(0)=P_\Sigma$, $M_1(0)=0$ and
$\alpha(0)=0$; recall that we may assume $\rho$ small in the
$C^2$-norm. It is important to recognize the quasilinear structure
of \eqref{transformed}.
\section{Linearization at Equilibria}
The full linearization at an equilibrium $(u_*,u_{\Gamma*},\Gamma_*)$ with $u_{\Gamma*}=u_*$, $\Gamma_*=\cup_k \Sigma_k$ a finite union of disjoint spheres contained in $\Omega$ and with radius $R_*>0$ given by
$R_*=(n-1)\sigma(u_*)/[\![\psi(u_*)]\!]$, reads
\begin{equation}
\label{lin}
\left\{\begin{aligned}
\kappa_*\partial_tv-d_*\Delta v&=\kappa_*f &&\text{in} && \Omega\setminus\Gamma_*\\
\partial_{\nu} v&=0 &&\text{on} && \partial \Omega\\
[\![v]\!]=0,v_\Gamma&=v &&\text{on}&& \Gamma_*\\
\kappa_{\Gamma*}\partial_t v_\Gamma -d_{\Gamma*} \Delta_*v_\Gamma-[\![d_*\partial_\nu v]\!]
+l_*u_*\partial_t \rho &=\kappa_{\Gamma*}f_\Gamma &&\text{on}&& \Gamma_*\\
l_* v_\Gamma - \sigma_* A_* \rho -\gamma_* \partial_t\rho&=g &&\text{on}&& \Gamma_*\\
v(0)=v_0,\ \rho(0)&=\rho_0.&& &&
\end{aligned}\right.
\end{equation}
Here
\begin{align*}
&\kappa_*=\kappa(u_*)>0,\quad &\kappa_{\Gamma*}=\kappa_\Gamma(u_*)>0,\quad &d_*=d(u_*)>0,\\
&d_{\Gamma*}=d_\Gamma(u_*)>0,\quad  &\sigma_*=\sigma(u_*)>0,\quad &\gamma_*=\gamma(u_*)\geq0,
\end{align*}
and as in \eqref{l-stern}
\begin{equation*}
 l_*= [\![\psi^\prime(u_*)]\!]+\sigma^\prime(u_*)\cH(\Gamma_*)
= \varphi^\prime(u_*)-\sigma^\prime(u_*)\varphi(u_*)/\sigma(u_*)
=\sigma(u_*)\lambda^\prime(u_*),
\end{equation*}
and
$$ A_*=-(\frac{n-1}{R_*^2}+\Delta_*),$$
where $\Delta_*$
denotes the {\em Laplace-Beltrami} operator on $\Gamma_*$.
\subsection{Maximal Regularity}
We begin with the case $\gamma_*>0$, which is the simpler one. Define the operator $L$ in
$$X_0:= L_p(\Omega)\times W^{r}_p(\Gamma_*)\times W^{s}_p(\Gamma_*)$$
with
$$X_1:=W^2_p(\Omega\setminus\Gamma_*)\times W^{2+r}_p(\Gamma_*)
\times W^{2+s}_p(\Gamma_*)$$
by means of
\begin{equation*}
\label{L-gamma}
\begin{split}
D(L)&=
\big\{(v,v_\Gamma,\rho)\in X_1:\; [\![v]\!]=0,\;\; v_\Gamma=v\text{ on }\Gamma_*,
\;\; \partial_\nu v=0 \text{ on } \partial\Omega\big\}, \\
L&=\left[\begin{array}{ccc}
 \!\! (-d_*/\kappa_*)\Delta &0 &0\\
\!\! -[\![(d_*/\kappa_{\Gamma*})\partial_\nu ]\!] &(l_*^2u_*/\gamma_*-d_{\Gamma*}\Delta_*)/\kappa_{\Gamma*} &- (l_*u_*\sigma_*/\gamma_*\kappa_{\Gamma*})A_*\\
\!\!  0&-(l_*/\gamma_*)&(\sigma_*/\gamma_*)A_*\end{array}\right]
\end{split}
\end{equation*}
In case $\gamma_*>0$, problem \eqref{lin} is equivalent to
the Cauchy problem
\begin{equation*}
\dot z+L z=(f,f_\Gamma-(l_*u_*/\gamma_*\kappa_{\Gamma*})g,g),
\quad z(0)=z_0,
\end{equation*}
where $z=(v,v_\Gamma,\rho)$ and $z_0=(v_0,v_0|_{\Gamma_0},\rho_0)$.
The main result on problem (\ref{lin}) for $\gamma_*>0$ is the following.
\begin{theorem}
\label{MR-gamma}
Let $1<p<\infty$,  $\gamma_*>0$, and
$$ -1/p\leq r\leq 1-1/p,\quad r\leq s\leq r+2.$$
 Then for each finite interval $J=[0,a]$, there is a unique solution
\begin{equation*}
 (v,v_\Gamma,\rho)\in\EE(J):=H^1_p(J;X_0)\cap L_p(J;X_1)
\end{equation*}
of \eqref{lin} if and only if the data $(f,f_\Gamma,g)$ and $(v_0,v_{\Gamma0},\rho_0)$ satisfy
\begin{equation*}
\begin{split}
&(f,f_\Gamma,g)\in\FF(J)= L_p(J;X_0)^3,\\
&(v_0,v_{\Gamma0},\rho_0)\in W^{2-2/p}_p(\Omega\setminus\Gamma_*)\times W^{2+r-2/p}_p(\Gamma_*)\times W^{2+s-2/p}_p(\Gamma_*)
\end{split}
\end{equation*}
and the compatibility conditions
\begin{equation*}
[\![v_0]\!]=0,\quad v_{\Gamma0}=v_0\;\;{\rm on}\;\;\Gamma_*,
\quad\partial_\nu v=0\;\;{\rm on}\;\;\partial\Omega.
\end{equation*}
The operator $-L$ defined above generates an analytic $C_0$-semigroup in
$X_0$ with maximal regularity of type $L_p$.
\end{theorem}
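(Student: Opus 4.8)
The plan is to reduce \eqref{lin} with $\gamma_*>0$ to the abstract Cauchy problem $\dot z + Lz = F$, $z(0)=z_0$, and then establish maximal $L_p$-regularity for $-L$ by a perturbation/decoupling argument. First I would observe that the ``$\rho$-row'' of $L$ is lower order relative to the parabolic structure of the $v$-equation, so the correct order of business is: (i) handle the decoupled diffusion problem for $v$ on $\Omega\setminus\Gamma_*$ with Neumann data on $\partial\Omega$ and the transmission/continuity conditions $[\![v]\!]=0$, $v_\Gamma=v$ on $\Gamma_*$, treating $v_\Gamma$ as a boundary trace; (ii) handle the surface equation, which after eliminating $\partial_t\rho$ via the Gibbs–Thomson row is a parabolic problem on $\Gamma_*$ for $(v_\Gamma,\rho)$ driven by the Neumann trace $[\![d_*\partial_\nu v]\!]$; (iii) couple them back together. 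For step (i), the operator $v\mapsto -(d_*/\kappa_*)\Delta v$ with the stated boundary conditions is, on each sphere's two sides, a standard second-order elliptic operator with inhomogeneous Dirichlet data on $\Gamma_*$ (once $v_\Gamma$ is prescribed) and Neumann data on $\partial\Omega$; its maximal $L_p$-regularity and the fact that it generates an analytic semigroup are classical (see \cite{DHP03}), and the Dirichlet-to-Neumann map $v_\Gamma\mapsto [\![d_*\partial_\nu v]\!]$ is a first-order (hence, relative to the surface diffusion, lower-order) pseudodifferential operator on $\Gamma_*$.

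For step (ii), on the sphere(s) $\Gamma_*$ the pair $(v_\Gamma,\rho)$ solves a system whose principal part is $\mathrm{diag}(-(d_{\Gamma*}/\kappa_{\Gamma*})\Delta_*,\ (\sigma_*/\gamma_*)A_*)$, i.e.\ a (up to sign and constants) second-order elliptic system on a compact manifold without boundary; here $A_*=-((n-1)/R_*^2+\Delta_*)$ is a lower-order perturbation of $-\Delta_*$. The matrix coupling terms $-(l_*u_*\sigma_*/\gamma_*\kappa_{\Gamma*})A_*$ and $-(l_*/\gamma_*)$ are of order two and zero respectively; because $\gamma_*>0$, the diagonal entries are nondegenerate, so one checks parameter-ellipticity of the $2\times 2$ symbol: the matrix $\begin{pmatrix} (d_{\Gamma*}/\kappa_{\Gamma*})|\xi|^2 & (l_*u_*\sigma_*/\gamma_*\kappa_{\Gamma*})|\xi|^2 \\ 0 & (\sigma_*/\gamma_*)|\xi|^2\end{pmatrix}$ (plus lower order, plus the DtN contribution) has spectrum in the right half-plane for large $|\xi|$ since it is upper triangular with positive diagonal; this is where one invokes the Agmon–Douglis–Nirenberg calculus / the results on parabolic systems with maximal regularity in \cite{DHP03,PrSi04}. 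Adding back the $[\![d_*\partial_\nu v]\!]$ term, which by step (i) gains only one surface derivative, keeps the system parabolic in the sense of maximal regularity on the scale $W^r_p\times W^s_p\to W^{2+r}_p\times W^{2+s}_p$, and the admissible ranges $-1/p\le r\le 1-1/p$, $r\le s\le r+2$ are exactly the trace/compatibility constraints forcing $v_0\in W^{2-2/p}_p$, $v_{\Gamma0}-v_0=0$ on $\Gamma_*$, and the relation $s\le r+2$ making the coupling term $A_*\rho$ land in $W^r_p(\Gamma_*)$.

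To couple (i) and (ii): write $L = L_0 + L_1$ where $L_0$ is block-triangular — the $v$-block decoupled with Dirichlet trace $v_\Gamma$, and the $(v_\Gamma,\rho)$-block with the $[\![d_*\partial_\nu v]\!]$ entry removed — and $L_1$ collects the off-diagonal couplings. Since $L_0$ is triangular with both diagonal blocks enjoying maximal $L_p$-regularity (by steps (i) and (ii)), $-L_0$ generates an analytic semigroup with maximal regularity; the coupling $L_1$ is of strictly lower order (the DtN term costs one surface derivative, the $v_\Gamma=v$ identification is a bounded trace), so $-L$ inherits maximal regularity by the standard perturbation theorem for operators with maximal regularity (small-lower-order perturbations, again \cite{DHP03}). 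Maximal regularity then yields the claimed isomorphism between $\EE(J)=H^1_p(J;X_0)\cap L_p(J;X_1)$ and $\FF(J)\times(\text{trace space})$ with the stated compatibility conditions, which is the assertion of the theorem. The main obstacle I anticipate is verifying the Lopatinskii–Shapiro / parameter-ellipticity condition for the full coupled symbol of $L$ on $\Gamma_*$ uniformly — i.e.\ that the inclusion of the first-order DtN operator into the second-order surface system does not destroy the angle of ellipticity — and keeping careful track of which of the two interface conditions is ``dynamic'' (the $\rho$-equation) versus which is the Gibbs–Thomson relation contributing the $A_*$ coupling, so that the numerology $r\le s\le r+2$ comes out correctly; everything else is an application of the by-now-standard maximal-regularity machinery.
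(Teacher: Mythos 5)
Your argument is correct in outline but follows a genuinely different decomposition from the paper's. The paper derives the constraints on $r,s$ from boundedness of $L:X_1\to X_0$ together with the compatibility $v_\Gamma=v|_{\Gamma_*}$ (which is what forces $r\ge-1/p$), and then keeps together exactly the block you split apart: the bulk diffusion coupled with the dynamic (relaxation-type) boundary condition for $v_\Gamma$ on $\Gamma_*$ is treated as one unit whose maximal $L_p$-regularity is quoted from \cite{DPZ08}, and $\rho$ is attached afterwards by a triangularity argument with a case distinction ($s>r$: the entry $A_*\rho$ in the $v_\Gamma$-row is of lower order, so one solves for $(v,v_\Gamma)$ first and then for $\rho$; $s=r$: the entry $l_*v_\Gamma/\gamma_*$ in the $\rho$-row is of lower order, so $\rho$ decouples and is solved first). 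Your route --- a decoupled bulk Dirichlet problem plus a $2\times 2$ Douglis--Nirenberg system on $\Gamma_*$, recoupled through the Dirichlet-to-Neumann map --- handles all cases $r\le s\le r+2$ uniformly, which is tidier on the surface side, but it relocates the hardest step onto you: at the endpoint $r=1-1/p$ the Neumann trace $W^2_p(\Omega\setminus\Gamma_*)\to W^{1-1/p}_p(\Gamma_*)$ admits no $\epsilon$-relative bound, so the DtN coupling is not of lower \emph{elliptic} order; one must instead exploit its \emph{parabolic} order one (the analogue of \lemref{DN-op}(b) in the $\cR$-bounded $L_p$-setting) to obtain a small $\lambda$-dependent relative bound, which amounts to reproving the relevant part of \cite{DPZ08}. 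Two small corrections: the constraint $s\le r+2$ comes from the zeroth-order entry $l_*v_\Gamma/\gamma_*$ in the $\rho$-row needing to land in $W^s_p(\Gamma_*)$, not from $A_*\rho$ (which gives $s\ge r$); and the final assertion of the theorem, generation of an analytic $C_0$-semigroup, should be recorded explicitly as a consequence of maximal regularity via \cite[Proposition 1.2]{Pru03} rather than left implicit.
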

\begin{proof}
Looking at the entries of $L$ we see that $L:X_1\to X_0$ is bounded provided $r\leq 1-1/p$, $r\leq s$, 
and $ s\leq r+2$. The compatibility condition $v_\Gamma=v_{|_{\Gamma_*}}$ implies $r+2\geq 2-1/p$. This explains the constraints on the parameters $r$ and $s$. To obtain maximal $L_p$-regularity, we first consider the case $s>r$. Then $L$ is lower triangular up to a perturbation. So we may solve the problem for $(v,v_\Gamma)$ with maximal $L_p$-regularity (cf.\ \cite{DPZ08} for the one-phase case)
 first and then that for $\rho$. In the other case we have $r=s$. Then the second term in the third line in the definition of $L$ is of lower order, hence $\rho$ decouples from $(v,v_\Gamma)$. This way we also obtain maximal $L_p$-regularity.
Since the Cauchy problem for $L$ has  maximal $L_p$-regularity, we can now infer from
\cite[Proposition 1.2]{Pru03} that $-L$ generates an analytic
$C_0$-semigroup in $X_0$.
\end{proof}

We note that if $l_*=0$ and $\gamma_*=0$  then the linear problem \eqref{lin} is not
well-posed. In fact, in this case the linear Gibbs-Thomson relation reads
$$ -\sigma_* A_* \rho =g,$$
which is not well-posed as the kernel of $A_*$ is non-trivial and $A_*$ is not surjective.

Now we consider the case $l_*\neq0$ and $\gamma_*=0$.
For the solution space we fix again $r,s\in\R$ with $r\leq s\leq r+2$, $-1/p\leq r\leq 1-1/p$, and consider
$$(v,v_\Gamma,\rho)\in \EE(J)=H^1_p(J,X_0)\cap L_p(J;X_1).$$
Then by trace theory the space of data becomes
\begin{align*}
(f,f_\Gamma,g)\in\FF_0(J)&:= L_p(J;L_p(\Omega))\times L_p(J;W^{r}_p(\Gamma_*))\\
 &\times[H^1_p(J;W^{s-2}_p(\Gamma_*)\cap L_p(J;W^{s}_p(\Gamma_*))],
\end{align*}
and the space of initial values will be
$$ (v_0,v_{\Gamma0},\rho_0)\in W^{2-2/p}_p(\Omega\setminus\Gamma_*)\times W^{r+2-2/p}_p(\Gamma_*)
\times W^{s+2-2/p}_p(\Gamma_*)$$
with compatibilities
$$ [\![v_0]\!]=0,\quad v_{\Gamma0}=v_0,\quad 
l_*v_{\Gamma0} - \sigma_*A_*\rho_0=g(0)\;\;\text{on}\;\;\Gamma_*, 
\quad \partial_\nu v=0 \;\;\text{on}\;\; \partial\Omega.$$
To obtain maximal $L_p$-regularity, we replace $v_\Gamma$ by  the  Gibbs-Thomson relation,
which for $\gamma_*=0$ is an elliptic equation. We obtain
$v_\Gamma = (\sigma_*/l_*) A_* \rho +g/l_*.$
Inserting this expression into the energy balance on the surface $\Gamma_*$ yields
\begin{equation}
\label{eq}
\big(l_*u_* + (\kappa_{\Gamma*}\sigma_*/l_*)A_*\big)\partial_t\rho
-d_{\Gamma*}\Delta_*v_\Gamma-[\![d_*\partial_\nu v]\!]
 =\kappa_{\Gamma*}( f_\Gamma -\partial_t g/l_*).
\end{equation}
Moreover, we obtain
\begin{equation*}
\begin{split}
d_{\Gamma*}\Delta_*v_\Gamma
&=(l_*u_*+(\kappa_{\Gamma*}\sigma_*/l_*)A_*))(d_{\Gamma*}/\kappa_{\Gamma*})\Delta_*\rho\\
&-(l_*u_*d_{\Gamma*}/\kappa_{\Gamma*})\Delta_*\rho+(d_{\Gamma*}/l_*)\Delta_*g.
\end{split}
\end{equation*}
Now we assume that
\begin{equation}
\label{exceptional}
 \eta_*:=\frac{(n-1) \sigma_*\kappa_{\Gamma*}}{u_*l_*^2 R_*^2}\neq 1
\end{equation}
which is equivalent to invertibility of the operator  $A_0:=l_*u_* + (\kappa_{\Gamma*}\sigma_*/l_*)A_*$. Applying its inverse to \eqref{eq} we arrive at the following equation for $\rho$:
\begin{equation}
\label{rho-eq}
\begin{aligned}
\partial_t\rho-(d_{\Gamma*}/\kappa_{\Gamma*})\Delta_*\rho+A_0^{-1}\{(u_*l_*d_{\Gamma*}/\kappa_{\Gamma*})\Delta_*\rho-[\![d_*\partial_\nu v]\!]\}=\tilde{g},
\end{aligned}
\end{equation}
with
$$\tilde{g}=A_0^{-1}\big\{\kappa_{\Gamma*} f_\Gamma
-((\kappa_{\Gamma*}/l_*)\partial_t g -(d_{\Gamma*}/l_*)\Delta_*g)\big\}.$$
Solving equation \eqref{eq} for $\partial_t\rho$ we obtain
for $v_\Gamma$:
\begin{align}
\label{v-Gamma-eq}
\kappa_{\Gamma*}\partial_t v_\Gamma -d_{\Gamma*} \Delta_*v_\Gamma-[\![d_*\partial_\nu v]\!] + l_*u_*A_0^{-1}\{d_{\Gamma*} \Delta_*v_\Gamma+[\![d_*\partial_\nu v]\!]\}= \tilde{f}_\Gamma.
\end{align}
where
\begin{equation*}
\tilde{f}_\Gamma = \kappa_{\Gamma*}\{f_\Gamma-l_*u_*A_0^{-1}(f_\Gamma-\partial_tg/l_*)\}.
\end{equation*}
Then by the regularity of $f_\Gamma$ and $g$ and with $r\leq s\leq r+2$ we see that
\begin{equation*}
\quad \tilde{f}_\Gamma\in L_p(J;W^r_p(\Gamma_*)),
\quad \tilde{g}\in  L_p(J;W^{s}_p(\Gamma_*)).
\end{equation*}
So the linear problem \eqref{lin} can be recast as an evolution equation in $X_0$ as
$$\dot{z}+L_0 z = (f,\tilde f_\Gamma,\tilde g),\quad z(0)=z_0,$$
with $L_0=L_{00}+L_{01}$ defined by
$$D(L_{0j})=\big\{(v,v_\Gamma,\rho)\in X_1:\; [\![v]\!]=0,\;\:
v_\Gamma=v \text{ on } \Gamma_*,\;\; \partial_\nu v=0 \text{ on }\partial\Omega\big\},$$
and
\begin{equation*}
\label{L-00}
\begin{split}
L_{00}&=\left[\begin{array}{ccc} (-d_*/\kappa_*)\Delta &0&0\\
 -[\![(d_*/\kappa_{\Gamma*})\partial_\nu ]\!]&-(d_{\Gamma*}/\kappa_{\Gamma*})\Delta_*&0 \\
   -A_0^{-1}[\![d_*\partial_\nu]\!]&0&-(d_{\Gamma*}/\kappa_{\Gamma*}) \Delta_*\end{array}\right],
\end{split}
\end{equation*}
and
\begin{equation*}
\label{L-01}
\begin{split}
L_{01}&=\!
\left[\begin{array}{ccc} 0 &0&0\\
 \!\!(l_*u_*/\kappa_{\Gamma*})A_0^{-1}[\![d_*\partial_\nu ]\!]&(l_*u_*d_{\Gamma*}/\kappa_{\Gamma*})A_0^{-1} \Delta_*&0 \\
 0&0& (u_*l_*d_{\Gamma*}/\kappa_{\Gamma*})A_0^{-1}\Delta_*
 \end{array}\right]\!.
\end{split}
\end{equation*}
Looking at $L_0$ we first note that $L_{01}$ is a lower order perturbation of $L_{00}$. The latter is lower triangular, and the problem for $(v,v_\Gamma)$ as above has maximal $L_p$-regularity in $X_0$. As the diagonal entry in the equation for $\rho$ has maximal $L_p$-regularity as well we may conclude that $-L_0$
generates an analytic $C_0$-semigroup with maximal regularity in $X_{0}$
More precisely, we have the following result.
\begin{theorem}
\label{MR-0}
 Let $1<p<\infty$,  $\gamma_*=0$, $-1/p\leq r\leq1-1/p$, $r\leq s\leq r+2$,
$l_*\neq0$, and assume $u_*l_*^2R_*^2\neq\kappa_{\Gamma*}\sigma_*(n-1)$.

Then for each interval $J=[0,a]$, there is a unique solution $(v,v_\Gamma,\rho)\in \EE(J)$
of (\ref{lin}) if and only if the data $(f,f_\Gamma,g)$ and $(v_0,v_{\Gamma0},\rho_0)$ satisfy
\begin{equation*}
\begin{split}
&(f,f_\Gamma,g)\in\FF_0(J),\\
&(v_0,v_{\Gamma0},\rho_0)\in W^{2-2/p}_p(\Omega\setminus\Gamma_*)
\times W^{r+2-2/p}_p(\Gamma_*)\times W^{s+2-2/p}_p(\Gamma_*)
\end{split}
\end{equation*}
and the compatibility conditions
$$[\![v_0]\!]=0,\quad v_{\Gamma0}=v_0,\quad l_*v_0-\sigma_* A_* \rho_0=g(0)
\text{ {\rm on} }\Gamma_*,\quad \partial_\nu v=0 \text{ {\rm on} }\partial\Omega.$$
The operator $-L_0$ defined above generates an analytic $C_0$-semigroup in $X_{0}$ with
maximal regularity of type $L_p$.
\end{theorem}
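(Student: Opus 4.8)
The plan is to carry out rigorously the reduction already sketched above and then to establish maximal $L_p$-regularity for the resulting operator. Since $\gamma_*=0$ and $l_*\neq0$, the linear Gibbs--Thomson relation $l_*v_\Gamma-\sigma_*A_*\rho=g$ on $\Gamma_*$ is an elliptic identity that can be solved for $v_\Gamma=(\sigma_*/l_*)A_*\rho+g/l_*$. Inserting this into the surface energy balance and using that the hypothesis $u_*l_*^2R_*^2\neq(n-1)\kappa_{\Gamma*}\sigma_*$ is precisely equivalent to bounded invertibility on $\Gamma_*$ of $A_0=l_*u_*+(\kappa_{\Gamma*}\sigma_*/l_*)A_*$ --- its only possible kernel being the constant mode, which \eqref{exceptional} excludes --- one solves for $\partial_t\rho$ and arrives at \eqref{rho-eq} for $\rho$ and \eqref{v-Gamma-eq} for $v_\Gamma$. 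Together with the bulk heat equation, the Neumann condition on $\partial\Omega$, and $[\![v]\!]=0,\ v_\Gamma=v$ on $\Gamma_*$, this is exactly the Cauchy problem $\dot z+L_0z=(f,\tilde f_\Gamma,\tilde g)$, $z(0)=z_0$, with $L_0=L_{00}+L_{01}$ as displayed above. The reformulation is equivalent to \eqref{lin}: from an $\EE(J)$-solution of the reduced system one recovers a solution of \eqref{lin} by reconstructing $v_\Gamma$ from $\rho$ and $g$, and conversely. Hence it suffices to show that $-L_0$ generates an analytic $C_0$-semigroup in $X_0$ with maximal $L_p$-regularity.

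I would first treat $L_{00}$, which is lower triangular. Its $\rho$-diagonal block $-(d_{\Gamma*}/\kappa_{\Gamma*})\Delta_*$ is a positive multiple of the negative Laplace--Beltrami operator on the compact manifold $\Gamma_*$, hence sectorial and with maximal $L_p$-regularity on $W^s_p(\Gamma_*)$. The upper-left $(v,v_\Gamma)$-block is the two-phase parabolic problem
\[
\kappa_*\partial_tv-d_*\Delta v=\kappa_*f\ \text{in}\ \Omega\setminus\Gamma_*,\qquad \partial_\nu v=0\ \text{on}\ \partial\Omega,
\]
\[
[\![v]\!]=0,\ v_\Gamma=v\ \text{on}\ \Gamma_*,\qquad \kappa_{\Gamma*}\partial_tv_\Gamma-d_{\Gamma*}\Delta_*v_\Gamma-[\![d_*\partial_\nu v]\!]=\kappa_{\Gamma*}f_\Gamma\ \text{on}\ \Gamma_*,
\]
i.e. the two-phase analogue of the dynamic-boundary-condition problem treated in \cite{DPZ08}. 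Its maximal $L_p$-regularity follows from the general theory once one verifies normal ellipticity of $-d_*\Delta$ (immediate, $d_*>0$) and the Lopatinskii--Shapiro condition. For the latter, freezing coefficients and localizing to the model problem on $\R^n_+\cup\R^n_-$ with flat interface $\R^{n-1}$, the decaying solutions are $\hat v_j=a\,e^{-\omega_j|y|}$ with $\omega_j=(|\xi|^2+\kappa_j\lambda/d_j)^{1/2}$ and $\text{Re}\,\omega_j>0$; continuity forces a common amplitude $a$, and the transformed surface equation collapses to $(\kappa_{\Gamma*}\lambda+d_{\Gamma*}|\xi|^2+d_1\omega_1+d_2\omega_2)\,a=0$. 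All coefficients being positive, the bracket has strictly positive real part whenever $\text{Re}\,\lambda\geq0$ and $(\lambda,\xi)\neq(0,0)$, so $a=0$. Thus the $(v,v_\Gamma)$-block, and therefore $L_{00}$, has maximal $L_p$-regularity in $X_0$.

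Next I would observe that $L_{01}$ is a lower-order perturbation of $L_{00}$: every nonzero entry contains the factor $A_0^{-1}$, which gains two surface derivatives, applied either to the second-order operator $\Delta_*$ on $\Gamma_*$ or to the first-order conormal trace $[\![d_*\partial_\nu\,\cdot\,]\!]$, so on $D(L_0)$ the operator $L_{01}$ maps into a space with strictly higher surface regularity than $X_0$; in particular it is $L_{00}$-bounded with relative bound zero. By the standard perturbation theorem for maximal $L_p$-regularity --- absorbing the lower-order terms over the finite interval $J$ via an interpolation inequality with small constant --- the operator $-L_0=-(L_{00}+L_{01})$ retains maximal $L_p$-regularity in $X_0$, and by \cite[Proposition 1.2]{Pru03} it generates an analytic $C_0$-semigroup there.

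It remains to translate this back to \eqref{lin}. The necessity (``only if'') of the stated regularity of $(f,f_\Gamma,g)$ and $(v_0,v_{\Gamma0},\rho_0)$ and of the compatibility conditions is trace theory: $\EE(J)$ embeds continuously into $C(J;\,\cdot\,)$ with values in the relevant trace spaces, and the boundary operators are continuous; the elimination of $v_\Gamma$ forces in addition $g\in H^1_p(J;W^{s-2}_p(\Gamma_*))\cap L_p(J;W^s_p(\Gamma_*))$, which is exactly the regularity under which $\tilde g\in L_p(J;W^s_p(\Gamma_*))$ and $\tilde f_\Gamma\in L_p(J;W^r_p(\Gamma_*))$. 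The sufficiency (``if'') then follows by applying the maximal $L_p$-regularity of $L_0$ to the data $(f,\tilde f_\Gamma,\tilde g)$ and $z_0=(v_0,v_0|_{\Gamma_*},\rho_0)$, reconstructing $v_\Gamma=(\sigma_*/l_*)A_*\rho+g/l_*$, and checking that the resulting triple lies in $\EE(J)$ and solves \eqref{lin}. I expect the principal obstacle to be the Lopatinskii--Shapiro verification for the two-phase dynamic-boundary block of $L_{00}$, hand in hand with the careful bookkeeping of function spaces through the reversible elimination of $v_\Gamma$ --- in particular confirming that \eqref{exceptional} is exactly the invertibility condition for $A_0$ and that the extra time-derivative of $g$ built into $\FF_0(J)$ is both forced and sufficient.
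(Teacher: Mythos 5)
Your reduction is the same as the paper's: eliminate $v_\Gamma$ via the Gibbs--Thomson relation, use that \eqref{exceptional} is exactly invertibility of $A_0=l_*u_*+(\kappa_{\Gamma*}\sigma_*/l_*)A_*$ (your mode-by-mode check of the kernel is correct: on a sphere $A_*$ annihilates the degree-one spherical harmonics, where $A_0$ acts as $l_*u_*\neq0$, and the only possible obstruction sits on the constants), pass to $\dot z+L_0z=(f,\tilde f_\Gamma,\tilde g)$ with $L_0=L_{00}+L_{01}$, and treat $L_{01}$ as a lower-order perturbation of the lower-triangular $L_{00}$. Your explicit Lopatinskii--Shapiro verification for the two-phase dynamic-boundary block is more detail than the paper gives (it cites \cite{DPZ08} for the one-phase case) and is correct.

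The gap is in the sentence ``The reformulation is equivalent to \eqref{lin}.'' A solution of the reduced system does \emph{not} automatically satisfy the algebraic Gibbs--Thomson constraint $l_*v_\Gamma-\sigma_*A_*\rho=g$: the reduced equations \eqref{rho-eq}--\eqref{v-Gamma-eq} were obtained by, in effect, differentiating that constraint in time, so they only encode its time derivative. Moreover you cannot simply ``reconstruct'' $v_\Gamma$ as $(\sigma_*/l_*)A_*\rho+g/l_*$ after the fact, because $v_\Gamma$ is already pinned down as the trace $v|_{\Gamma_*}$ inside $D(L_0)$; what must be shown is that this trace coincides with $(\sigma_*/l_*)A_*\rho+g/l_*$ for \emph{all} $t$, not only at $t=0$. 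This is precisely where the compatibility condition $l_*v_0-\sigma_*A_*\rho_0=g(0)$ enters the sufficiency direction, and it is the one step the paper proves separately: the difference $w:=v_\Gamma-((\sigma_*/l_*)A_*\rho+g/l_*)$ satisfies the homogeneous parabolic equation \eqref{w-equation} with $w(0)=0$, hence $w\equiv0$ by uniqueness for that equation. Without this propagation argument the ``if'' direction of the theorem is not established; with it, your proof closes and coincides with the paper's.
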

Note that the compatibility condition $l_*v_0-\sigma_* A_* \rho_0=g(0)$ allows to recover the
Gibbs-Thomson relation from the dynamic equations.
Indeed, it follows from \eqref{lin} and \eqref{rho-eq}-\eqref{v-Gamma-eq} that
the function $w:=v_\Gamma-((\sigma_*/l_*)A_*\rho +g/l_*)$ satisfies
the parabolic equation
\begin{equation}
\label{w-equation}
\kappa_{\Gamma*}\partial_tw-d_{\Gamma*}\Delta_*w =0,
\quad w(0)=0 \quad\text{on}\quad \Gamma_*.
\end{equation}
As $w\equiv0$ is the unique solution of \eqref{w-equation}
we conclude that the Gibbs-Thomson relation is satisfied.

\subsection{The Eigenvalue Problem}
By compact embedding, the spectrum of $L$ consists only of countably many discrete eigenvalues of
finite multiplicity and is independent of $p$. Therefore it is enough to consider the case $p=2$.
In the following, we will use the notation
\begin{equation*}
\begin{split}
&(u|v)_\Omega:=(u|v)_{L_2(\Omega)}:=\int_\Omega u\bar v\,dx,
\quad u,v\in L_2(\Omega),\\
&(g|h)_{\Gamma_*}\!:=(g|h)_{L_2(\Gamma_*)}\!:=\int_{\Gamma_*} g\bar h\,ds,
\quad g,h\in L_2(\Gamma_*),
\end{split}
\end{equation*}
for the $L_2$ inner product in $\Omega$ and $\Gamma_*$, respectively.
Moreover, we set $|v|_\Omega=(v|v)^{1/2}_{\Omega}$ and
$|g|_{\Gamma_*}=(g|g)^{1/2}_{\Gamma_*}$.
The eigenvalue problem reads as follows:
\begin{equation}
\label{evp}
\left\{\begin{aligned}
\kappa_*\lambda v-d_*\Delta v&=0 &&\text{in }&&\Omega\setminus\Gamma_*\\
\partial_{\nu} v &=0 &&\text{on }&&\partial \Omega\\
[\![v]\!]&=0 &&\text{on }&&\Gamma_*\\
l_*v-\sigma_* A_* \rho -\gamma_* \lambda\rho &=0 &&\text{on }&&\Gamma_*\\
\kappa_{\Gamma*}\lambda v -d_{\Gamma*}\Delta_*v-[\![d_*\partial_\nu v]\!]+l_*u_*\lambda \rho&=0 &&\text{on }&&\Gamma_*.\\
\end{aligned}\right.
\end{equation}
Let $\lambda\neq0$ be an eigenvalue with eigenfunction $(v,\rho)\neq0$.
Then (\ref{evp}) yields
\begin{align*}
0=\lambda|\sqrt{\kappa_*}v|^2_{\Omega}-(d_*\Delta v|v)_\Omega
= \lambda|\sqrt{\kappa_*}v|^2_{\Omega}+ |\sqrt{d_*}\nabla v|_\Omega^2 +([\![d_*\partial_\nu v]\!]|v)_{\Gamma_*}.
\end{align*}
On the other hand, we have on the interface
\begin{align*}
0&=\kappa_{\Gamma*}\lambda|v|^2_{\Gamma_*}-d_{\Gamma*}(\Delta_\Gamma v|v)_{\Gamma_*}-([\![d_*\partial_\nu v]\!]|v)_{\Gamma_*}+\lambda u_*l_*(\rho|v)_{\Gamma_*}\\
&=  \lambda\kappa_{\Gamma*}|v|^2_{\Gamma_*}+d_{\Gamma*}|\nabla_\Gamma v|_{\Gamma_*}^2-([\![d_*\partial_\nu v]\!]|v)_{\Gamma_*}
+\lambda u_*l_*(\rho|v)_{\Gamma_*}.
\end{align*}
Adding these identities we obtain
\begin{align*}
0= \lambda|\sqrt{\kappa_*}v|^2_{\Omega}+ |\sqrt{d_*}\nabla v|_\Omega^2 +\lambda\kappa_{\Gamma*}|v|^2_{\Gamma_*}+d_{\Gamma*}|\nabla_\Gamma v|_{\Gamma_*}^2
+\lambda u_*l_*(\rho|v)_{\Gamma_*},
\end{align*}
hence employing the Gibbs-Thomson law this results into the relation
\begin{align*}
 \lambda|\sqrt{\kappa_*}v|^2_{\Omega}+ |\sqrt{d_*}\nabla v|_\Omega^2 +\lambda\kappa_{\Gamma*}|v|^2_{\Gamma_*}&+d_{\Gamma*}|\nabla_\Gamma v|_{\Gamma_*}^2\\
&+\lambda u_*\sigma_* (A_*\rho|\rho)_{\Gamma_*} +
\gamma_*u_*|\lambda|^2|\rho|^2_{\Gamma_*}=0.
\end{align*}
 Since $A_*$ is
selfadjoint in $L_2(\Gamma_*)$, this identity shows that all
eigenvalues of $L$ are real.
Decomposing $v=v_0+\bar{v}$, $v_\Gamma=v_{\Gamma,0}+\bar{v}_\Gamma$, $\rho=\rho_0+\bar{\rho}$, with the normalizations
$(\kappa_*|v_0)_{\Omega}=(v_{\Gamma,0}|1)_{\Gamma_*}=(\rho_0|1)_{\Gamma_*}=0$, this
identity can be rewritten as
\begin{equation}
\label{evid}
\begin{aligned}
&\lambda\big\{|\sqrt{\kappa_*}v_0|^2_{\Omega} + \kappa_{\Gamma*}|v_{\Gamma,0}|^2_{\Gamma_*}+\sigma_*
u_*(A_*\rho_0|\rho_0)_{\Gamma_*}+\lambda u_*\gamma_*|\rho_0|^2_{\Gamma_*}\big\}\\
&+|\sqrt{d_*}\nabla v_0|^2_{\Omega}+ d_{\Gamma*}|\nabla_\Gamma v_{\Gamma,0}|^2_{\Gamma_*}\\
&+\lambda\Big[ (\kappa_*|1)\bar{v}^2 +\kappa_{\Gamma*}|\Gamma_*|\bar{v}_\Gamma^2 -\sigma_*u_*\frac{n-1}{R_*^2}|\Gamma_*|\bar{\rho}^2 + \lambda u_* \gamma_*|\Gamma_*| \bar{\rho}^2\Big] =0.\nonumber
\end{aligned}
\end{equation}
In case  $\Gamma_*$ is connected, $A_*$ is positive semi-definite on functions with mean zero, and hence
the bracket determines whether there are positive eigenvalues.
Taking the mean in \eqref{evp} we obtain
$$(\kappa_*|1)_\Omega \bar{v} + \kappa_{\Gamma*}|\Gamma_*| \bar{v}_\Gamma + l_*u_*|\Gamma_*| \bar{\rho}=0.$$
Hence minimizing the function
$$\phi(\bar{v},\bar{v}_\Gamma,\bar{\rho}):=(\kappa_*|1)\bar{v}^2 +\kappa_{\Gamma*}|\Gamma_*|\bar{v}_\Gamma^2 -\sigma_*u_*\frac{n-1}{R_*^2}|\Gamma_*|\bar{\rho}^2$$
with respect to the constraint we see that there are no positive eigenvalues provided the stability condition  $\zeta_*\leq 1$ is satisfied.
\smallskip

If $\Gamma_*=\bigcup\nolimits_{1\le l\le m}\Gamma^l_\ast$ consists of $m$ spheres $\Gamma^l_\ast$ of equal radius, then
\begin{equation}
\label{N-L-m}
N(L) = {\rm span}\left\{(\frac{\sigma_* (n-1)}{R^2_*},-l_*),(0,Y^l_1),\ldots,(0,Y^l_n) :
\, 1\le l\le m\right\},
\end{equation}
where the functions $Y^l_j$ denote the {\em spherical harmonics of degree one} on $\Gamma_\ast^l$
(and $Y^l_j\equiv 0$ on $\bigcup\nolimits_{i\neq l}\Gamma^i_\ast$),
normalized by $(Y^l_j|Y^l_k)_{\Gamma^l_*}=\delta_{jk}$.
$N(L)$ is isomorphic to the tangent space of $\cE$ at $(u_*,\Gamma_*)\in\cE$,
as was shown in \cite[Theorem 4.5.(vii)]{PSZ10}.

We can now state the main result on linear stability.

\goodbreak
\begin{theorem}
\label{lin-stability}
Let $\sigma_*>0$, $\gamma_*\geq0$, $l_*\neq0$,
\begin{equation*}
\eta_*:=(n-1)\sigma_*\kappa_{\Gamma*}/u_*l_*^2R_*^2\neq 1
\quad\text{in case $\gamma_*=0$},
\end{equation*}
and assume that the interface $\Gamma_*$ consists of $m\geq1$
components. Let
\begin{equation*}
\label{zeta}
 \zeta_\ast= \frac{(n-1)\sigma_* [(\kappa_*|1)_{\Omega}+\kappa_{\Gamma*}|\Gamma_*|]}{u_*l_*^2R_*^2|\Gamma_*|},
\end{equation*}
and let the equilibrium energy ${\sf E}_e$ be defined as in \eqref{eq-energy}. Then
\begin{itemize}
\item[{(i)}]
${\sf E}_e^\prime(u_*)= (\zeta_\ast-1) u_*l^2_*R^2_*|\Gamma_*|/(n-1){\sigma_*}$.
\vspace{0mm}
\item[{(ii)}] $0$ is a an eigenvalue of $L$
with geometric multiplicity $(mn+1)$.
\vspace{0mm}
\item[{(iii)}] $0$ is semi-simple if $\zeta_\ast\neq 1$.
\vspace{0mm}
\item[{(iv)}] If $\Gamma_*$ is connected and $\zeta_\ast\le 1$, or if $\eta_*>1$ and $\gamma_*=0$, then all eigenvalues of $-L$ are negative, except for the eigenvalue $0$.
\vspace{0mm}
\item[{(v)}] If $\zeta_\ast>1$, and  $\eta_*<1$ in case $\gamma_*=0$, then there are precisely $m$ positive eigenvalues of
$-L$, where $m$ denotes the number of equilibrium spheres.
\vspace{0mm}
\item[(vi)] If $\zeta_\ast\leq 1$, and $\eta_*<1$ in case $\gamma_*=0$ then $-L$ has precisely $m-1$ positive eigenvalues.
\item[{(vii)}] $N(L)$ is isomorphic to the tangent space $T_{(u_*,\Gamma_*)}\cE$ of $\cE$ at
$(u_*,\Gamma_*)\in\cE$.
\vspace{0mm}
\end{itemize}
\end{theorem}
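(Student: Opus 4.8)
The plan is to establish the seven assertions roughly in order of increasing difficulty, using the energy identity \eqref{evid} --- which already gives that every eigenvalue of $L$ is real --- as the main tool. Assertion (i) is a direct substitution: insert the definitions of $\zeta_\ast$, of $l_\ast=\sigma_\ast\lambda^\prime(u_\ast)$, and of $R_\ast=(n-1)\sigma_\ast/\varphi(u_\ast)$ into the expression for ${\sf E}_e^\prime(u_\ast)$ computed in Section~2(g). For (ii), set $\lambda=0$ in \eqref{evid}: this forces $\nabla v\equiv0$ on $\Omega\setminus\Gamma_\ast$ and $\nabla_\Gamma v_\Gamma\equiv0$, hence $v\equiv c$ is a constant by $[\![v]\!]=0$; the remaining equations then reduce to $\sigma_\ast A_\ast\rho=l_\ast c$, which (since $c$ is orthogonal to $N(A_\ast)$) is solvable for $\rho$ uniquely up to the $mn$-dimensional space $N(A_\ast)={\rm span}\{Y^l_j\}$ of degree-one spherical harmonics, while the surface energy balance is then automatically satisfied. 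This gives $\dim N(L)=mn+1$ and the description \eqref{N-L-m}. For (vii) I would parametrize $\cE$ near $(u_\ast,\Gamma_\ast)$ by $(u,x_1,\dots,x_m)$ and differentiate: the $u$-derivative reproduces the thermodynamic null vector, the $x_l$-derivatives the translational null vectors (the normal component of an infinitesimal translation of a sphere being a degree-one harmonic), exactly as in \cite[Theorem~4.5.(vii)]{PSZ10}.

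For (iii), recall that since $L$ has compact resolvent, $0$ is semi-simple iff $N(L)\cap R(L)=\{0\}$, equivalently iff the natural pairing $N(L)\times N(L^\ast)\to\C$ is nondegenerate. I would compute $N(L^\ast)$, observe that the $m$ rotational sectors pair nondegenerately by the symmetry of each equilibrium sphere, and reduce the question to the one-dimensional thermodynamic sector, where the pairing evaluates to a nonzero multiple of ${\sf E}_e^\prime(u_\ast)$, hence of $\zeta_\ast-1$ by (i). Thus $\zeta_\ast\neq1$ implies $0$ is semi-simple. (Alternatively, one attempts to solve $Lz_1=z_0$ for $z_0$ the thermodynamic null vector and identifies the Fredholm obstruction, which is again proportional to $\zeta_\ast-1$.)

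The substance of the theorem is (iv)--(vi). Writing \eqref{evid} as $\lambda\,Q_1+Q_2+\gamma_\ast u_\ast\lambda^2|\rho|_{\Gamma_\ast}^2=0$, with $Q_1=|\sqrt{\kappa_\ast}v|_\Omega^2+\kappa_{\Gamma\ast}|v_\Gamma|_{\Gamma_\ast}^2+u_\ast\sigma_\ast(A_\ast\rho|\rho)_{\Gamma_\ast}$ and $Q_2=|\sqrt{d_\ast}\nabla v|_\Omega^2+d_{\Gamma\ast}|\nabla_\Gamma v_\Gamma|_{\Gamma_\ast}^2\ge0$, one checks (by the sign of the quadratic in $\lambda$ when $\gamma_\ast>0$, and directly when $\gamma_\ast=0$) that any eigenfunction with eigenvalue $\lambda>0$ must have $Q_1<0$. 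Since $A_\ast$ is positive semidefinite on functions with vanishing mean on each component of $\Gamma_\ast$, $Q_1<0$ is possible only if the vector $(\bar\rho_1,\dots,\bar\rho_m)$ of component means of $\rho$ is nonzero; so the potentially unstable modes live in a finite-dimensional subspace. Taking component means in the Gibbs--Thomson equation and the global mean in the surface energy balance of \eqref{evp} (evaluating $\int_{\Gamma_\ast}[\![d_\ast\partial_\nu v]\!]$ via the divergence theorem and the bulk equation) produces the linear relations that identify this subspace with the mean-mode part of $N({\sf E}^\prime(e_\ast))$, and reduces the count of positive eigenvalues of $-L$ to the number of negative directions of the symmetric form $\phi(\bar v,\bar v_\Gamma,\bar\rho)=(\kappa_\ast|1)_\Omega\bar v^2+\sum_l|\Gamma_\ast^l|(\kappa_{\Gamma\ast}(\bar v_\Gamma^l)^2-u_\ast\sigma_\ast(n-1)R_\ast^{-2}\bar\rho_l^2)$ restricted to that codimension-one subspace. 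An inertia computation in the spirit of Section~2(e), case~2, shows this number is $m$ if $\zeta_\ast>1$ and $m-1$ if $\zeta_\ast\le1$ (so $0$ in the connected case), which yields (iv) case~1, (v), and (vi). The remaining case of (iv) ($\gamma_\ast=0$, $\eta_\ast>1$) I would handle via the elimination of Section~4.1: substituting $v_\Gamma=(\sigma_\ast/l_\ast)A_\ast\rho$ and using the operator $A_0=l_\ast u_\ast+(\kappa_{\Gamma\ast}\sigma_\ast/l_\ast)A_\ast$, whose spectrum on the constant modes has the sign of $1-\eta_\ast$; when $\eta_\ast>1$ this sign reversal turns the mean modes stable, so that no positive eigenvalue remains.

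The hard part will be the finite-dimensional reduction in (iv)--(vi): one must check that the lower bound $Q_1\ge\phi$ on the mean modes together with the mean relations captures the unstable subspace exactly, not merely up to lower-order terms, and one must push the $\gamma_\ast=0$ elimination far enough to see precisely where the invertibility hypothesis $\eta_\ast\neq1$ is used. Once that reduction is in place, everything collapses to computing the inertia of a symmetric matrix restricted to a hyperplane, which is routine.
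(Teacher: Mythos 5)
Your treatment of (i), (ii), (iii), (vii) and of the first case of (iv) matches the paper's (which for the kernel computations simply observes that $N(L)$ and $\cE$ are unchanged from the case $\kappa_{\Gamma*}=d_{\Gamma*}=0$ and cites \cite{PSZ10}). The genuine gap is in (v) and (vi). The energy identity \eqref{evid} is a \emph{necessary} condition satisfied by an eigenfunction; it can at best bound the number of positive eigenvalues from above (and even that requires an argument that eigenfunctions for distinct positive eigenvalues remain jointly negative for $Q_1$, i.e.\ a symmetrization of $L$), but it cannot \emph{produce} positive eigenvalues. Your proposal never supplies an existence mechanism for the $m$ (resp.\ $m-1$) unstable modes. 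Worse, the claimed reduction ``number of positive eigenvalues of $-L$ $=$ inertia of $\phi$ on the constraint hyperplane'' cannot follow from \eqref{evid} plus the mean relations alone: those ingredients are equally available when $\gamma_*=0$ and $\eta_*>1$, where $\phi$ may have many negative directions (e.g.\ $\Gamma_*$ disconnected, or $\zeta_*>1$) and yet, by part (iv), $-L$ has \emph{no} positive eigenvalue. So whatever argument you give must use $\gamma_*>0$ or $\eta_*<1$ in an essential way, and your sketch does not indicate where.

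The paper's mechanism, which is the missing ingredient, is a reduction of the eigenvalue problem to the one-parameter operator family
$B_\lambda=l_*^2u_*\lambda T_\lambda+\gamma_*\lambda+\sigma_*A_*$ on $L_2(\Gamma_*)$, where $T_\lambda=(\lambda\kappa_{\Gamma*}-d_{\Gamma*}\Delta_*+D_\lambda)^{-1}$ and $D_\lambda$ is the Dirichlet-to-Neumann operator: $\lambda>0$ is an eigenvalue of $-L$ iff $N(B_\lambda)\neq\{0\}$. One computes $\lim_{\lambda\to0^+}B_\lambda$ on the span of the component indicators (this is where your form $\phi$, the constant $a_0=|\Gamma_*|/[(\kappa_*|1)_\Omega+\kappa_{\Gamma*}|\Gamma_*|]$, connectedness and $\zeta_*$ enter, producing $m$ or $m-1$ negative eigenvalues of $B_0$), and $\lim_{\lambda\to\infty}\lambda T_\lambda=1/\kappa_{\Gamma*}$, so that $B_\lambda$ is positive definite for large $\lambda$ precisely when $\gamma_*>0$ or ($\gamma_*=0$ and $\eta_*<1$) --- this is exactly where $\eta_*$ enters (v) and (vi). A continuity/crossing argument in $\lambda$ then converts each negative eigenvalue of $B_0$ into a positive eigenvalue of $-L$ and gives the exact count. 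The remaining case of (iv) ($\gamma_*=0$, $\eta_*>1$) is handled in the paper not by your $A_0$-sign heuristic but by a direct identity: testing $(\lambda\kappa_{\Gamma*}-d_{\Gamma*}\Delta_*+D_\lambda)A_*\rho+\lambda\delta_*\rho=0$ against $A_*\rho$, using positivity of $D_\lambda$ and splitting along $N(\Delta_*)$, which forces $\rho=0$. You should replace your finite-dimensional reduction by this $B_\lambda$-homotopy (or an equivalent device that sees the large-$\lambda$ regime); as it stands the central counting step is unsupported and, in the excluded parameter range, false.
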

\begin{remarks} (a) Formally, the result is also true if $l_*=0$ and
$\gamma_*>0 $. In this case ${\sf E}_e'(u_*)=
(\kappa_*|1)_{\Omega}+\kappa_{\Gamma*}|\Gamma_*|>0$ and $\zeta_\ast=\infty$, hence the
equilibrium is unstable.
If in addition $\gamma_*=0$, then the problem is not well-posed.
\vspace{1mm}\\
(b) Note that $\zeta_\ast$ does
neither depend on the diffusivities $d_*$, $d_{\Gamma*}$, nor on the coefficient of
undercooling  $\gamma_*$.
\vspace{1mm}\\
(c)
It is shown in \cite{PrSi08} that in case
$\zeta_\ast=1$ and $\Gamma_*$  connected,  the eigenvalue $0$ is no longer semi-simple: its
algebraic multiplicity rises by $1$ to $(n+2)$.
\vspace{1mm}\\
\goodbreak
\noindent
(d) It is remarkable that in case kinetic undercooling is absent, large surface heat capacity,
i.e.\ $\eta_*>1$, stabilizes the system, even in such a way that multiple spheres are stable, in contrast to the case $\eta_*<1$.
\vspace{1mm}\\
(e) We can show that, in case $\gamma_*=0$, if $\eta_*$ increases to $1$ then all positive eigenvalues go to $\infty$.
\end{remarks}
We recall a result on the Dirichlet-to-Neumann operator $D_\lambda$, $\lambda\geq0$ which is defined as follows.
Let $g\in H^{3/2}_2(\Gamma_*)$ be given. Solve the elliptic transmission problem
\begin{equation}
\label{ell-tram}
\left\{
\begin{aligned}
\kappa_*\lambda w -d_*\Delta w &=0\quad \text{in}\; \Omega\setminus\Gamma_*, \\
\partial_\nu w&=0\quad \text{on}\; \partial\Omega,\\
[\![w]\!]&=0\quad \text{on}\; \Gamma_*, \\
 w&=g\quad \text{on}\; \Gamma_*,
\end{aligned}
\right.
\end{equation}
and define $D_\lambda g=-[\![d\partial_\nu w]\!]\in H^{1/2}_2(\Gamma_*)$.
\goodbreak
\begin{lemma}\label{DN-op}
The Dirichlet-to-Neumann operator $D_\lambda$ has the following well-known properties.
\begin{itemize}
\item[(a)] $(D_\lambda g|g)_{\Gamma_*} = \lambda |\kappa_*^{1/2}w|^2_\Omega + |d_*^{1/2}\nabla w|_\Omega^2$, for all $g\in H^{3/2}_2(\Gamma_*)$;
\vspace{1mm}
\item[(b)] $|D_\lambda g|_{\Gamma_*}\leq C[\lambda^{1/2}|g|_{\Gamma_*}+|g|_{H^1_2(\Gamma_*)}]$,
for all $g\in H^{3/2}_2(\Gamma_*)$ and $\lambda\ge 1$;
\vspace{1mm}
\item[(c)] $(D_\lambda g|g)_{\Gamma_*}\geq c\lambda^{1/2}|g|_{\Gamma_*}^2$, for all $g\in H^{3/2}_2(\Gamma_*)$ and $\lambda\ge 1$.
\end{itemize}
In particular, $D_\lambda$ extends to a self adjoint positive definite linear operator in $L_2(\Gamma_*)$ with domain $H^1_2(\Gamma_*)$.
\end{lemma}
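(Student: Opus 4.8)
The plan is to derive everything from the weak ``energy'' identity for \eqref{ell-tram}, reserving the only genuine work for the $\lambda$-dependent upper bound~(b).

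\emph{Identity (a).} I would multiply $\kappa_*\lambda w-d_*\Delta w=0$ by $\bar w$ and integrate over $\Omega_1$ and $\Omega_2$ separately. Integration by parts together with $\partial_\nu w=0$ on $\partial\Omega$, $[\![w]\!]=0$, and $w|_{\Gamma_*}=g$ (so that the two boundary terms on $\Gamma_*$ combine into a jump) gives
$$0=\lambda|\kappa_*^{1/2}w|_\Omega^2+|d_*^{1/2}\nabla w|_\Omega^2+([\![d_*\partial_\nu w]\!]|g)_{\Gamma_*}=\lambda|\kappa_*^{1/2}w|_\Omega^2+|d_*^{1/2}\nabla w|_\Omega^2-(D_\lambda g|g)_{\Gamma_*},$$
which is (a). Carrying out the same computation with the solutions $w_g,w_h$ associated to two data $g,h$ yields $(D_\lambda g|h)_{\Gamma_*}=\lambda(\kappa_*w_g|w_h)_\Omega+(d_*\nabla w_g|\nabla w_h)_\Omega$; hence $D_\lambda$ is symmetric and nonnegative on $H^{3/2}_2(\Gamma_*)$.

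\emph{Lower bound (c).} Since $\partial\Omega_1=\Gamma_*$ and $w|_{\Gamma_*}=g$, the multiplicative trace inequality gives $|g|_{\Gamma_*}^2\le C(|w|_\Omega|\nabla w|_\Omega+|w|_\Omega^2)$; Young's inequality with weight $\lambda^{1/2}$ and $\lambda\ge1$ turns this into $|g|_{\Gamma_*}^2\le C\lambda^{-1/2}(\lambda|w|_\Omega^2+|\nabla w|_\Omega^2)$. As $\kappa_*,d_*$ are bounded below by a positive constant, (a) gives $\lambda|w|_\Omega^2+|\nabla w|_\Omega^2\le C(D_\lambda g|g)_{\Gamma_*}$, and combining the two estimates proves (c).

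\emph{Upper bound (b).} This is the main point, and I expect the delicate part to be capturing the exponent $\lambda^{1/2}$ rather than a larger power of $\lambda$. Because $[\![w]\!]=0$ and $w|_{\Gamma_*}=g$, problem \eqref{ell-tram} decouples into the Dirichlet problems $\kappa_j\lambda w-d_j\Delta w=0$ in $\Omega_j$, $w|_{\Gamma_*}=g$ (with $\partial_\nu w=0$ on $\partial\Omega$ for $j=2$), so $D_\lambda=D_\lambda^{(1)}+D_\lambda^{(2)}$ and it suffices to estimate each single-phase Dirichlet-to-Neumann map. For these I would use the standard parameter-elliptic theory: a partition of unity and flattening of the boundary reduce the estimate, modulo commutators that are of order zero uniformly in $\lambda$ (hence $\lambda$-subdominant for $\lambda\ge1$), to the half-space model $\lambda w-\Delta w=0$ in $\R^n_+$, $w|_{x_n=0}=g$; there $w(\cdot,x_n)$ has Fourier transform $e^{-x_n(\lambda+|\xi'|^2)^{1/2}}\hat g(\xi')$, so $D_\lambda$ acts as the Fourier multiplier $(\lambda+|\xi'|^2)^{1/2}$ and $|D_\lambda g|_{L_2}^2=\int(\lambda+|\xi'|^2)|\hat g|^2\,d\xi'\le\lambda|g|_{L_2}^2+|g|_{H^1_2}^2$. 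Transferring back via the partition of unity yields (b). Alternatively (b) is contained in the parameter-dependent resolvent estimates of \cite{DHP03} or in the analogous lemmas in \cite{EPS03,PSZ10}, which I would cite if a self-contained argument is deemed too long.

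\emph{Conclusion.} Symmetry and nonnegativity are established above; (b) shows $D_\lambda\in\cL(H^1_2(\Gamma_*),L_2(\Gamma_*))$ and (c) shows $D_\lambda\ge c\lambda^{1/2}>0$. To identify the domain I would note, again by localization, that $D_\lambda-D_1$ has the bounded symbol $(\lambda+|\xi'|^2)^{1/2}-(1+|\xi'|^2)^{1/2}$ and thus extends to a bounded symmetric operator on $L_2(\Gamma_*)$; hence $D_\lambda$ and $D_1$ have the same domain, and $D_1$ is a first-order positive elliptic self-adjoint pseudodifferential operator on the compact (real-analytic) manifold $\Gamma_*$, so $D(D_1)=H^1_2(\Gamma_*)$. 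Self-adjointness of $D_\lambda$ then follows since it is a bounded symmetric perturbation of the self-adjoint operator $D_1$.
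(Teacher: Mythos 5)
The paper offers no proof of this lemma at all --- it is stated as a list of ``well-known properties'' with no argument attached --- so there is nothing to compare against; your proposal supplies the missing standard argument, and it is correct. The energy identity in (a) (integrating by parts separately over $\Omega_1$ and $\Omega_2$ so that the two boundary contributions on $\Gamma_*$ assemble into $-(D_\lambda g|g)_{\Gamma_*}$), the multiplicative trace inequality plus Young with weight $\lambda^{1/2}$ for (c), and the reduction of (b) to the half-space symbol $(\lambda+|\xi'|^2)^{1/2}$ via the one-phase splitting $D_\lambda=D_\lambda^{(1)}+D_\lambda^{(2)}$ are exactly the expected route. Two small points worth tightening if you write this out in full. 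First, in the localization for (b) the remainder terms are not literally ``order zero uniformly in $\lambda$'' as operators on $g$; what one actually shows via parameter-elliptic theory is that they are dominated by $\varepsilon(\lambda^{1/2}|g|_{\Gamma_*}+|g|_{H^1_2(\Gamma_*)})$ plus $\lambda$-independent lower-order norms, which can then be absorbed for $\lambda\ge1$ --- so the conclusion stands but the phrasing hides a standard absorption step. Second, your positive definiteness is deduced from (c), which you only prove for $\lambda\ge1$; for $0<\lambda<1$ strict positivity follows directly from (a) (the right-hand side vanishes only if $w\equiv0$, hence $g=0$), while for $\lambda=0$ the operator is only positive semi-definite with the constants in its kernel --- a looseness already present in the paper's statement, but worth flagging since part (a) of Section 4.3 uses $D_0$.
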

\subsection{Proof of Theorem \ref{lin-stability}}
For the case that $\kappa_{\Gamma*}=d_{\Gamma*}=0$  this result is proved in \cite{PSZ10}.
Assertion (i) follows from the considerations in part (g) of the introduction.
Assertions (ii), (iii), and (vii) only involve the kernel of $L$ and the manifold of equilibria. 
Since both are the same as in the case $\kappa_{\Gamma*}=d_{\Gamma*}=0$, 
the proofs of (ii), (iii) and (vii) given in \cite{PSZ10} remain valid in the more general situation considered here. 
The first part of assertion (iv) has been proved above,
and it thus remains to prove the assertions in (v) and (vi), and the second part of (iv).

If the stability condition $\zeta_\ast\leq1$ does not hold or if ${\Gamma_*}$ is disconnected, then there
is always a positive eigenvalue. It is a delicate task to prove this. The principal idea to attack this problem is as follows: suppose $\lambda>0$ is an eigenvalue, and
that $\rho$ is known; solve the resolvent diffusion problem
\begin{equation}
\label{NDdiffusion}
\left\{\begin{aligned}
\kappa_*\lambda v -d_*\Delta v &=0 &&\text{in}&& \Omega\setminus{\Gamma_*}\\
\partial_{\nu} v&=0 &&\text{on}&&\partial\Omega \\
[\![v]\!]&=0 &&\text{on} && {\Gamma_*}\\
 v&= v_\Gamma &&\text{on}&&  {\Gamma_*}\\
\end{aligned}\right.
\end{equation}
to get $-[\![d_*\partial_\nu v]\!]=:D_\lambda v_\Gamma$.
Next we solve the resolvent surface diffusion problem
$$ \lambda\kappa_{\Gamma*} v_\Gamma -d_{\Gamma*}\Delta_* v_\Gamma + D_\lambda v_\Gamma= h,$$
to the result
$$v_\Gamma = T_\lambda h:=(\lambda\kappa_{\Gamma*} -d_{\Gamma*}\Delta_* + D_\lambda)^{-1}h.$$
Setting $h=-\lambda u_*l_*\rho$ this implies with
the linearized Gibbs-Thomson law
 the equation
\begin{equation}\label{Tlambda}
[(l_*^2u_*)\lambda T_\lambda +\gamma_* \lambda]\rho +\sigma_* A_* \rho =0.
\end{equation}
$\lambda>0$ is an eigenvalue of $-L$ if and only if
\eqref{Tlambda} admits a nontrivial solution. We consider this
problem in $L_2({\Gamma_*})$. Then $A_*$ is selfadjoint in $L_2({\Gamma_*})$ and
$$\sigma_*(A_* \rho|\rho)_{\Gamma_*} \geq -\frac{(n-1)\sigma_*}{R_*^2} |\rho|^2_{\Gamma_*},$$
for each $\rho\in D(A_*)=H^2_2(\Gamma_*)$.
Moreover, since $A_*$ has compact resolvent, the operator
\begin{equation}
\label{B-lambda}
B_\lambda:=[(l_*^2u_*)\lambda T_\lambda +\gamma_* \lambda]+\sigma_* A_*
\end{equation}
has compact resolvent as well, for each $\lambda>0$. Therefore the spectrum of $B_\lambda$ consists only of eigenvalues which, in addition, are real.
We intend to prove that in case either $\Gamma_*$ is disconnected or the stability condition does not hold, $B_{\lambda_0}$ has $0$ as an eigenvalue, for some $\lambda_0>0$. This has been achieved in \cite{PSZ10} in the simpler case where $\kappa_{\Gamma*}=d_{\Gamma*}=0$, in which case $T_\lambda$ is the Neumann-to-Dirichlet operator for \eqref{NDdiffusion}. Here we try to use similar ideas as in \cite{PSZ10}, namely we investigate $B_\lambda$ for small and for large values of $\lambda$. However, in the situation of this paper this will be more involved.

For this purpose we need more information about $T_\lambda$. So we first consider the problem
\begin{equation}
\label{NDBdiffusion}
\left\{\begin{aligned}
\kappa_*\lambda v -d_*\Delta v &=0 &&\text{in}&& \Omega\setminus{\Gamma_*}\\
\partial_\nu v&=0 &&\text{on}&&\partial\Omega \\
[\![v]\!]&=0 &&\text{on} && {\Gamma_*}\\
\lambda\kappa_{\Gamma*} v -d_{\Gamma*}\Delta_* v -[\![d_*\partial_\nu v]\!] &= g &&\text{on}&&  {\Gamma_*}.\\
\end{aligned}\right.
\end{equation}
As we have seen above this problem has a unique solution for each $\lambda>0$,
denoted by $v=S_\lambda g$. Obviously for $\lambda=0$ this problem has a one-dimensional eigenspace spanned by the constant function ${\sf e}\equiv 1$. The problem is solvable if and only if the mean value of $g$ is zero, i.e.\ if $g\in L_{2,0}(\Gamma_*)$. This implies by compactness that $S_\lambda g\to S_0g$ as well as $T_\lambda\to T_0g$ as $\lambda\to 0^+$, whenever $g$ has mean zero, where $S_0g$ means the unique solution of \eqref{NDBdiffusion} for $\lambda=0$ with mean zero.

\medskip

\noindent (a) \, Suppose that $\Gamma_*$ is disconnected. If the interface
$\Gamma_*$ consists of $m$ components $\Gamma_*^k$, $k=1,..., m$,
we set ${\sf e}_k=1$ on $\Gamma_*^k$ and zero elsewhere. Let
$\rho=\sum_k a_k {\sf e}_k\neq0$ with $\sum_k a_k=0$, hence $Q_0\rho=\rho$,
where $Q_0$ is the canonical projection onto $L_{2,0}(\Gamma_*)$ in $L_2(\Gamma_*)$,
$
Q_0 \rho:=\rho-(\rho|{\sf e})_{\Gamma_*}/|\Gamma_*|.$
Then
$$\lim_{\lambda\to0} \lambda T_\lambda \rho = \lim_{\lambda\to0} \lambda T_\lambda Q_0\rho=0,$$
since $T_\lambda Q_0$ is bounded as $\lambda\to0$. This implies
$$\lim_{\lambda\to0} (B_\lambda \rho|\rho)_{\Gamma_*} = - \,({(n-1)\sigma_*}/{R_*^2})\, \sum_k |\Gamma_*^k|a_k^2<0.$$
Therefore $B_\lambda$ is not positive semi-definite for small $\lambda$.

\medskip

\noindent
(b) \, Suppose next that $\Gamma_*$ is connected. Consider $\rho={\sf e}$. Then we have
$$(B_\lambda {\sf e}|{\sf e})_{\Gamma_*}= u_*l_*^2\lambda(T_\lambda{\sf e}|{\sf
e})_{\Gamma_*} +\lambda\gamma_*|{\sf e}|^2_{\Gamma_*}-
({(n-1)\sigma_*}/{R_*^2})|{\sf e}|_{\Gamma_*}^2.$$ We compute the
limit $\lim_{\lambda \to0}\lambda(T_\lambda{\sf e}|{\sf
e})_{\Gamma_*}$ as follows. First solve the problem
\begin{equation}
\label{NDBdiffusion0}
\left\{\begin{aligned}
 -d_*\Delta v &=-\kappa_* a_0 &&\text{in} && \Omega\setminus{\Gamma_*} \\
\partial_{\nu} v&=0 &&\text{on}&& \partial\Omega \\
[\![v]\!]&=0 &&\text{on}&& {\Gamma_*} \\
 -d_{\Gamma*} \Delta_\Gamma v-[\![d_*\partial_\nu v]\!]&=
 {\sf e}-\kappa_{\Gamma*}a_0 &&\text{on}&&{\Gamma_*},\\
\end{aligned}\right.
\end{equation}
where $a_0=|\Gamma_*|/[(\kappa_*|1)_{\Omega}+\kappa_{\Gamma*}|\Gamma_*|]$, which is solvable
since the necessary compatibility condition holds. Let $v_0$ denote
the solution which satisfies the normalization condition
$(\kappa_*|v_0)_{\Omega}+\kappa_{\Gamma*}(v_0|1)_{\Gamma_*}=0$.
Then $v_\lambda:=S_\lambda{\sf e}-v_0- a_0/\lambda$ satisfies the problem
\begin{equation}
\label{NDBdiffusion1}
\left\{\begin{aligned}
\kappa_* \lambda v_\lambda -d_*\Delta v_\lambda &=-\kappa_*\lambda v_0
&&\text{in}&&\Omega\setminus{\Gamma_*}\\
\partial_{\nu} v&=0 &&\text{on}&&\partial\Omega\\
[\![v_\lambda]\!]&=0 &&\text{on}&& {\Gamma_*}\\
 \kappa_{\Gamma*}\lambda v_\lambda -d_{\Gamma*} \Delta_* v_\lambda-[\![d_*\partial_\nu v_\lambda]\!]&
 = -\lambda\kappa_{\Gamma*}v_0 &&\text{on}&&{\Gamma_*}.\\
\end{aligned}\right.
\end{equation}
By the normalization $(\kappa_*|v_0)_{\Omega}+\kappa_{\Gamma*}(v_0|1)_{\Gamma_*} =0$  we see that the compatibility condition for \eqref{NDBdiffusion} holds for each $\lambda>0$, and so we conclude that
$v_\lambda$ is bounded in $W^2_2(\Omega\setminus\Gamma_*)$ as
$\lambda\to0$, it even converges to $0$. 
Hence we have
$$\lim_{\lambda\to0}\lambda T_\lambda{\sf e}
= \lim_{\lambda\to 0}[(\lambda v_\lambda + \lambda v_0)|_{\Gamma_*}+ a_0]= a_0 .$$
This then implies
\begin{equation*}
\lim_{\lambda\to0} (B_\lambda {\sf e}|{\sf e})_{\Gamma_*}
 = l_*^2u_* \frac{|\Gamma_*|^2}{(\kappa_*|1)_{\Omega}+\kappa_{\Gamma*}|\Gamma_*|}-
\,\frac{(n-1)\sigma_*|\Gamma_*|}{R_*^2}<0,
\end{equation*}
 if the stability condition
does not hold, i.e.\ if $\zeta_\ast>1$. Therefore also in this case $B_\lambda$ is not positive semi-definite for small $\lambda>0$.

\medskip

\noindent (c) \, Next we consider the behavior of
$(B_\lambda \rho|\rho)_{\Gamma_*}$ as $\lambda\to\infty$. We intend to show that
$B_\lambda$ is positive definite for large $\lambda$.  We have
\begin{align*}
\lambda T_\lambda&= \lambda(\kappa_{\Gamma*}\lambda -d_{\Gamma*}\Delta_\Gamma +D_\lambda)^{-1}
\to 1/\kappa_{\Gamma*}\quad \mbox{ for }\; \lambda\to\infty,
\end{align*}
 as $D_\lambda$ is of lower order, by part (b) of Lemma 4.5.
This implies for a given $g\in D(A_*)$
\begin{align*}
(B_\lambda g|g)_{\Gamma_*}&= l_*^2u_*\lambda (T_\lambda g|g)_{\Gamma_*}+ \sigma_*(A_*g|g)_{\Gamma_*} + \gamma_*\lambda|g|^2_{\Gamma_*}\\
&\geq (\gamma_*\lambda -\frac{(n-1)\sigma_*}{R_*^2})|g|^2_{\Gamma_*}+l_*^2u_*\lambda (T_\lambda g|g)_{\Gamma_*}\\
&\sim (\gamma_*\lambda -\frac{(n-1)\sigma_*}{R_*^2}+ \frac{l_*^2u_*}{\kappa_{\Gamma*}})|g|^2_{\Gamma_*},
\end{align*}
as $\lambda\to\infty$.
We have thus shown that  $B_\lambda$ is positive definite if
$\gamma_*>0$ and $\lambda> (n-1)\sigma_*/\gamma_*R_*^2$,
or if
\begin{equation}
\label{str-cond}
\gamma_*=0\quad\text{and}\quad
{l_*^2u_*}/{\kappa_{\Gamma*}} > {(n-1)\sigma_*}/{R_*^2}.
\end{equation}
In particular, for $\gamma_*=0$ and small $l_*^2$ the latter condition condition will be violated, in general.

\medskip

\noindent (d) \, In summary, concentrating on the cases $\gamma*>0$ or \eqref{str-cond},
we have shown that $B_\lambda$ is
not positive semi-definite for small $\lambda>0$ if either
$\Gamma_*$ is not connected or the stability condition does not
hold, and $B_\lambda$ is always positive definite for large
$\lambda$.
Let
\begin{equation*}
\lambda_0 = \sup\{\lambda>0:\, B_\mu \mbox{ is not positive semi-definite for each } \mu\in(0,\lambda]\}.
\end{equation*}
Since $B_\lambda$ has compact resolvent, $B_\lambda$ has a negative eigenvalue for each $\lambda<\lambda_0$. This implies that $0$ is an eigenvalue of
$B_{\lambda_0}$, thereby proving that $-L$ admits the positive eigenvalue $\lambda_0$.

Moreover, we have also shown that
\begin{equation*}
 B_0\rho= \lim_{\lambda\to 0}[ l^2_* u_*\lambda T_\lambda \rho +\gamma_*\lambda\rho
 +\sigma_* A_* \rho]
= \frac{l_*^2 u_*|\Gamma_*|}{(\kappa_*|1)_{\Omega}+\kappa_{\Gamma*}|\Gamma_*|} P_0 \rho +\sigma_* A_* \rho ,
\end{equation*}
where $P_0\rho:=(I-Q_0)\rho=(\rho|{\sf e})_{\Gamma_*}/|\Gamma_*|$.
Therefore, $B_0$ has the eigenvalue
\begin{equation*}
\frac{u_*l_*^2|\Gamma_*|}{(\kappa_*|1)_\Omega+\kappa_{\Gamma*}|\Gamma_*|}-\frac{(n-1)\sigma_*}{R_*^2}
=\frac{u_*l_*^2|\Gamma_*|}{(\kappa_*|1)_\Omega+\kappa_{\Gamma*}|\Gamma_*|}
(1-\zeta_\ast)
\end{equation*}
with eigenfunction ${\sf e}$, and in case $m>1$ it also has the
eigenvalue $-(n-1)\sigma_*/R_*^2$ with precisely $m-1$ linearly
independent eigenfunctions of the form $\sum_k a_k {\sf e}_k$ with
$\sum_k a_k=0$. 

As $\lambda$ varies from $0$ to $\lambda_0$,
all the negative eigenvalues of $B_0$ identified above
will eventually have to cross $0$ along the real axis.
At each of these occasions, $-L$
will inherit at least one positive eigenvalue, which will then remain positive.
This implies that $-L$ has exactly $m$
positive eigenvalues if the stability condition does not hold, and
$m-1$ otherwise. This covers the case $\gamma_*>0$ as well as \eqref{str-cond}.

\medskip

\noindent
(e) \, To cover the remaining we assume $\gamma_*=0$ and $\kappa_{\Gamma*}(n-1)/R_*^2>u_*l_*^2/\sigma_*=:\delta_*$.
Suppose $\lambda>0$ is an eigenvalue of $L_0$. Then there is $\rho\neq0$ such that
$$ (\lambda\kappa_{\Gamma*} -d_{\Gamma*}\Delta_*+D_\lambda)A_*\rho +\lambda \delta_*\rho=0.$$
Multiplying this equation in $L_2(\Gamma_*)$ by $A_*\rho$ and integrating by parts one obtains the identity
$$\lambda\kappa_{\Gamma*}|A_*\rho|_{\Gamma_*}^2 + d_{\Gamma*}|\nabla_{\Gamma_*}A_*\rho|^2_{\Gamma_*}+(D_\lambda A_*\rho|A_*\rho)_{\Gamma_*} +\lambda\delta_*(A_*\rho|\rho)_{\Gamma_*}=0.$$
As $D_\lambda$ is positive definite in $L_2(\Gamma_*)$ this equation implies
$$\lambda\kappa_{\Gamma*}|A_*\rho|_{\Gamma_*}^2 +\lambda\delta_*(A_*\rho|\rho)_{\Gamma_*}\leq0.$$
\goodbreak
Let $P$ denote the projection onto the kernel $\cN(\Delta_*)$ and $Q=I-P$. Since $P,Q$ commute with $A_*$ this implies
$$\lambda\kappa_{\Gamma*}|A_*Q\rho|_{\Gamma_*}^2 +\lambda\kappa_{\Gamma*}|A_*P\rho|_{\Gamma_*}^2+\lambda\delta_*(A_*P\rho|P\rho)_{\Gamma_*}\leq0,$$
as $A_*$ is positive semi-definite on $\cR(Q)=\cR(\Delta_*)$.
Now  $A_*P=-((n-1)/R_*^2)P$ and
$$0\geq\lambda\kappa_{\Gamma*}|A_*P\rho|_{\Gamma_*}^2+\lambda\delta_*(A_*P\rho|P\rho)_{\Gamma_*}=
\lambda\frac{n-1}{R_*^2}\Big[\kappa_{\Gamma*}\frac{n-1}{R_*^2}-\delta_*\Big]|P\rho|_{\Gamma_*}^2\geq0,$$
hence $P\rho=0$ and $A_*Q\rho=0$. This implies $A_*\rho=0$ and therefore $\rho=0$ as $\delta_*>0$. This shows that there are no positive eigenvalues
of $L_0$ in case $\gamma_*=0$ and $\kappa_{\Gamma*}(n-1)/R_*^2>u_*l_*^2/\sigma_*$. This completes the proof.

\section{The Semiflow in Presence of Kinetic Undercooling}
In this section we assume throughout $\gamma(s)>0$ for all $0<s<u_c$, i.e.\ kinetic undercooling is present
at the relevant temperature range. In this case we may apply the results in \cite{PSZ09} and \cite{KPW10}, resulting in a rather complete analysis of the problem.

\subsection{Local Well-Posedness}

\medskip

\noindent
To prove local well-posedness we employ the direct mapping method as introduced in Section 3. As base space we use
\begin{equation*}
 X_0= L_p(\Omega)\times W^{-1/p}_p(\Sigma)\times W^{1-1/p}_p(\Sigma),
\end{equation*}
and we set
\begin{equation*}
\begin{aligned}
X_1&=\big\{(v,v_\Gamma,\rho)\in H^2_p(\Omega\setminus\Sigma)\times W^{2-1/p}_p(\Sigma)
\times  W^{3-1/p}_p(\Sigma):\\
& \hspace{5cm} [\![v]\!]=0,\; v_\Gamma=v_{|_\Sigma},\; \partial_\nu v_{|_{\partial\Omega}}=0\big\}.
\end{aligned}
\end{equation*}
The trace space $X_\gamma$ then becomes for $p>n+2$
\begin{equation*}
\begin{aligned}
X_{\gamma} &=\big\{(v,v_\Gamma,\rho)\in W^ {2-2/p}_p(\Omega\setminus\Sigma)\times W^{2-3/p}_p(\Sigma)\times W^{3-3/p}_p(\Sigma):\\
& \hspace{5cm} [\![v]\!]=0,\; v_\Gamma=v_{|_\Sigma},\; \partial_\nu v_{|_{\partial\Omega}}=0\big\},
\end{aligned}
\end{equation*}
and that with the time weight $t^{1-\mu}$, $1\geq\mu>1/p$,
\begin{equation*}
\begin{aligned}
X_{\gamma,\mu}&=\big\{(v,v_\Gamma,\rho)\in W^ {2\mu-2/p}_p(\Omega\setminus\Sigma)\times W^{2\mu-3/p}_p(\Sigma)\times W^{2\mu+1-3/p}_p(\Sigma):\\
&\hspace{5cm} [\![v]\!]=0,\; v_\Gamma=v_{|_\Sigma},\; \partial_\nu v_{|_{\partial\Omega}}=0\big\},
\end{aligned}
\end{equation*}
Note that
\begin{equation}
\label{crucialembedding}
X_{\gamma,\mu}\hookrightarrow BUC^1(\Omega\setminus\Sigma)\times C^1(\Sigma)\times C^2(\Sigma),\end{equation}
provided $2\mu>1 +(n+2)/p$, which is feasible as $p>n+2$. In the sequel, we only consider this range of $\mu$.
We want to rewrite system \eqref{transformed} abstractly as the quasilinear problem in $X_0$
\begin{align}\label{abstract}
\dot{z}+A(z)z&= F(z),\quad z(0)=z_0,
\end{align}
where $z=(v,v_\Gamma,\rho)$ and $z_0=(v_0,v_{\Gamma0},\rho_0)$. Here the quasilinear part $A(z)$ is the diagonal matrix operator defined by
\begin{equation*}
\label{A(z)}
-A(z)={\rm diag}
\left[\begin{aligned}
&(d(v)/\kappa(v))(\Delta - M_2(\rho):\nabla^2)\\
&(d_\Gamma(v_\Gamma)/\kappa_\Gamma(v_\Gamma))(P_\Gamma(\rho)M_0(\rho))^2:\nabla_\Sigma^2\; \\
&(\sigma(v_\Gamma)/\gamma(v_\Gamma))\cG(\rho):\nabla_\Sigma^2
 \end{aligned}\right]
\end{equation*}
with
$M_2(\rho)=M_1(\rho)+M_1^{\sf T}(\rho) -M_1(\rho)M_1^{\sf T}(\rho)$.
The semilinear part $F(z)$ is given by
\begin{equation*}
\left[
\begin{aligned}
&\cR(\rho)v+\frac{1}{\kappa(v)}\big\{d^\prime(v)|(I-M_1(\rho))\nabla v\big|^2
 - d(v)((I-M_1(\rho)):\nabla M_1(\rho)|\nabla v)\big\}\\
&\frac{1}{\kappa_\Gamma(v_\Gamma)}\big\{-\cB(v_\Gamma,\rho)v
-[l(v_\Gamma) +l_\Gamma(v_\Gamma)\cH(\rho)-\gamma(v_\Gamma)\beta(\rho)\partial_t\rho]
\beta(\rho)\partial_t\rho + m_3\big\} \\
&\varphi(v_\Gamma) /\beta(\rho)\gamma(v_\Gamma) + \sigma(v_\Gamma)\cF(\rho)/\gamma(v_\Gamma)
\end{aligned}\right]
\end{equation*}
where  $\varphi(s)=[\![\psi(s)]\!]$ and
\begin{equation*}
m_3=
-d_\Gamma(v_\Gamma)(P_\Gamma(\rho)M_0(\rho))^2:\nabla_\Sigma^2v_\Gamma
-\cC(v_\Gamma,\rho)v_\Gamma.
\end{equation*}
We note that $m_3$ depends  on $v_\Gamma$, $\nabla_\Sigma v_\Gamma$,
and on $\rho$, $\nabla_\Sigma\rho$, $\nabla^2_\Sigma\rho$, but not on $\nabla_\Sigma^2v_\Gamma$,
hence is of lower order.
Apparently, the first two components of $F(z)$ contain the time derivative $\partial_t\rho$; we may replace it by
$$\partial_t\rho = \{\varphi(v_\Gamma)+\sigma(v_\Gamma)\cH(\rho)\}/\beta(\rho)\gamma(v_\Gamma),$$
to see that it is of lower order as well.

Now fix a ball $\BB:=B_{X_{\gamma,\mu}}(z_0,R)\subset X_{\gamma,\mu}$,
where $|\rho_0|_{C^1(\Sigma)}\leq\eta$ for some sufficiently small $\eta>0$.
Then it is not difficult to verify that
\begin{equation*}
(A,F)\in C^1(\BB,\cB(X_1,X_0)\times X_0)
\end{equation*}
provided
$d_i,\psi_i,d_\Gamma,\sigma,\gamma\in C^3(0,\infty)$ and
$d_j,\kappa_j,\sigma,\gamma>0$ on $(0,u_c)$, $j=1,2,\Gamma$,
and provided
$2\geq 2\mu> 1+ (n+2)/p$ as before.
Moreover, as $A(z)$ is diagonal, well-known results about elliptic differential operators show that $A(z)$ has the property of maximal regularity of type $L_{p}$, and also of type $L_{p,\mu}$, for each $z\in\BB$.
In fact, for small $\eta>0$ and $R>0$, $A(z)$ is small perturbation of
\begin{equation*}
A_\#(z)={\rm diag}\big[-(d(v)/\kappa(v))\Delta, -(d_\Gamma(v_\Gamma)/\kappa_\Gamma(v_\Gamma))\Delta_\Sigma,-(\sigma(v_\Gamma)/\gamma(v_\Gamma))\Delta_\Sigma \big].
\end{equation*}
Therefore we may apply \cite[Theorem 2.1]{KPW10} to obtain local well-posedness of \eqref{abstract}, i.e.\ a unique local solution
$$z\in H^1_{p,\mu}((0,a);X_0)\cap L_{p,\mu}((0,a);X_1)\hookrightarrow C([0,a];X_{\gamma,\mu})\cap C((0,a];X_\gamma)$$
which depends continuously on the initial value $z_0\in \BB$. The resulting solution map $[z_0\mapsto z(t)]$ defines a local semiflow in $X_{\gamma ,\mu}$.

\subsection{Nonlinear Stability of Equilibria}

\medskip

\noindent
Let $e_*=(u_*,u_{\Gamma*},\Gamma_*)$ denote an equilibrium as in Section 4. In this case we choose $\Sigma=\Gamma_*$ as a 
reference manifold, and
as shown in the previous subsection we obtain the abstract quasilinear parabolic problem
\begin{align}\label{abstract*}
\dot{z}+A(z)z&= F(z),\quad z(0)=z_0,
\end{align}
with $X_0$, $X_1$, $X_\gamma$ as above. We set $z_*=(u_*,u_{\Gamma*},0)$. Assuming that
$\zeta_\ast\neq 1$ in the stability condition, we have shown in Section 4 that the equilibrium $z_*$ is normally hyperbolic.
Therefore we may apply \cite[Theorems 2.1 and 6.1 ]{PSZ09} to obtain the following result.
\goodbreak
\begin{theorem}
\label{nonlinstab}
Let $p>n+2$. Suppose $\gamma>0$ on $(0,u_c)$ and the assumptions of \eqref{regularity-coeff} hold true.
As above $\cE$ denotes the set of equilibria of \eqref{abstract*}, and we fix some $z_*\in\cE$.
Then we have
\vspace{1mm}\\
{\bf (a)} If $\Gamma_*$ is connected and $\zeta_*<1$ then $z_*$ is stable in $X_\gamma$, and there exists $\delta>0$ such
that the unique solution $z(t)$ of (\ref{abstract*}) with initial
value $z_0\in X_\gamma$ satisfying $|z_0-z_*|_{\gamma}<\delta$
exists on $\R_+$ and converges at an exponential rate in $X_\gamma$
to some $z_\infty\in\cE$ as $t\rightarrow\infty$.
\vspace{1mm}
\\
{\bf (b)} If $\Gamma_*$ is disconnected or if $\zeta_*>1$ then $z_*$ is unstable in $X_\gamma$ and even in $X_0$.
For each sufficiently small $\rho>0$ there is $\delta\in(0,\rho]$ such that the solution $z(t)$ of \eqref{abstract*} with initial
value $z_0\in X_\gamma$ subject to $|z_0-z_*|_{\gamma}<\delta$ either satisfies
\begin{itemize}
\item[(i)] ${\rm dist}_{X_\gamma}(z(t_0);\cE)>\rho$ for some finite time $t_0>0$; or
\vspace{1mm}
\item[(ii)] $z(t)$ exists on $\R_+$ and converges at exponential rate in $X_\gamma$ to some $z_\infty\in\cE$.
\end{itemize}
\end{theorem}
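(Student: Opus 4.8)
The plan is to derive both statements as direct consequences of the generalized principle of linearized stability and its instability counterpart from \cite{PSZ09}, i.e.\ Theorems~2.1 and~6.1 there, applied to the abstract quasilinear Cauchy problem \eqref{abstract*}. A large part of the hypotheses has already been verified: in the preceding subsection we checked that $(A,F)\in C^1(\BB;\cB(X_1,X_0)\times X_0)$ on a small ball $\BB\subset X_{\gamma,\mu}$ centered at $z_*$, that $A(z_*)$ enjoys maximal $L_p$-regularity (it is block-diagonal with elliptic diagonal entries; in fact $A(z_*)=A_\#(z_*)$ since $\rho_*=0$), and that \eqref{abstract*} generates a local semiflow on $X_{\gamma,\mu}$. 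Moreover the full linearization of \eqref{abstract*} at $z_*$ is precisely the operator $-L$ from Section~4, which by \thmref{MR-gamma} generates an analytic $C_0$-semigroup in $X_0$ with maximal regularity of type $L_p$. It remains to confirm the spectral hypotheses: that $z_*$ is normally stable under the assumptions of (a), and normally hyperbolic under those of (b).

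To this end I would first record the structure of the equilibrium set near $z_*$. By \eqref{equilibria-I}, $\cE$ is locally a smooth submanifold of $X_\gamma$ of dimension $mn+1$: $n$ coordinates for each of the $m$ centers, and one free parameter coupling the common radius $R_*$ to the temperature $u_*$ through $[\![\psi(u_*)]\!]=(n-1)\sigma(u_*)/R_*$, which is locally solvable because $l_*=\sigma_*\lambda^\prime(u_*)\neq0$. By \thmref{lin-stability}(ii),(iii),(vii), the zero eigenvalue of $L$ is semisimple (here the assumption $\zeta_*\neq1$ is used), the eigenspace $N(L)$ has dimension $mn+1$, and $N(L)=T_{z_*}\cE$; in addition the computations of Section~4.2 show that all eigenvalues of $L$ are real and isolated, so $\sigma(L)\cap i\RR=\{0\}$. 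In the setting of (a), where $\Gamma_*$ is connected and $\zeta_*<1$, \thmref{lin-stability}(iv) gives $\sigma(L)\setminus\{0\}\subset(0,\infty)$; together with the preceding facts, $z_*$ is normally stable. In the setting of (b), where $\Gamma_*$ is disconnected or $\zeta_*>1$, note that $\gamma_*>0$ makes the auxiliary condition on $\eta_*$ in \thmref{lin-stability} vacuous; parts~(v) and~(vi) of that theorem then show that $-L$ has $m\ge1$ (if $\zeta_*>1$), respectively $m-1\ge1$ (if $\Gamma_*$ is disconnected and $\zeta_*\le 1$), positive eigenvalues, so $\sigma(L)$ meets the open left half plane while still $\sigma(L)\cap i\RR=\{0\}$ with $0$ semisimple; hence $z_*$ is normally hyperbolic with non-trivial unstable subspace.

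With these facts in hand, \cite[Theorem~2.1]{PSZ09} applied to \eqref{abstract*} yields assertion~(a) in the stated form: stability of $z_*$ in $X_\gamma$, a radius $\delta>0$, global existence and exponential convergence in $X_\gamma$ to some $z_\infty\in\cE$ for all $z_0$ with $|z_0-z_*|_\gamma<\delta$. Likewise, \cite[Theorem~6.1]{PSZ09} yields assertion~(b): instability of $z_*$ in $X_\gamma$ (and even in the weaker topology of $X_0$), and the dichotomy that any solution starting $\delta$-close to $z_*$ in $X_\gamma$ either leaves the $\rho$-neighborhood of $\cE$ in $X_\gamma$ after a finite time, or exists on $\RR_+$ and converges exponentially in $X_\gamma$ to an equilibrium.

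I expect the genuinely difficult content to lie entirely upstream, in \thmref{lin-stability} — in particular the semisimplicity of the zero eigenvalue, the identification $N(L)=T_{z_*}\cE$, and the eigenvalue count obtained by reducing to the operator family $B_\lambda$ and analysing it for small and large $\lambda$. Once that is available, the present argument is essentially bookkeeping: matching the function-space framework $(X_0,X_1,X_\gamma,\EE)$ used here with the one required in \cite{PSZ09}, checking that $F$ is $C^1$ on a ball of $X_{\gamma,\mu}$ with $2\mu>1+(n+2)/p$ and that the local solution regularizes instantaneously from $X_{\gamma,\mu}$ into $X_\gamma$ so that the asymptotics can be phrased in $X_\gamma$, and verifying that the $X_0$-instability in (b) is indeed what the abstract theorem delivers. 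The one point deserving separate care is the non-generic configuration $\zeta_*=1$ with $\Gamma_*$ disconnected, which is formally admitted in (b) but where $0$ need not be semisimple; there one has to appeal to the refined spectral information (cf.\ the remarks following \thmref{lin-stability}) or dispose of this case by hand.
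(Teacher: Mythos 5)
Your proposal follows essentially the same route as the paper: the paper's entire proof consists of noting that Section~4 establishes normal stability/hyperbolicity of $z_*$ (under $\zeta_*\neq 1$) and then invoking Theorems~2.1 and~6.1 of \cite{PSZ09}, which is exactly your argument, only written out in more detail. Your closing observation about the configuration $\zeta_*=1$ with $\Gamma_*$ disconnected is well taken — the paper quietly assumes $\zeta_*\neq 1$ in its one-line proof even though part~(b) as stated formally admits that case — but this is a (minor) gap in the paper rather than in your argument.
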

\begin{remark}
The only equilibria which are excluded from our analysis are those with $\zeta_*=1$, which means ${\sf E}^\prime_e(u_*)=0$. These are critical points of the function ${\sf E}_e(u)$ at which a bifurcation may occur. In fact, if such $u_*$ is a maximum or a minimum of ${\sf E}_e$ then two branches of $\cE$ meet at $u_*$, a stable and and an unstable one, which means that $(u_*,\Gamma_*)$ is a turning point in $\cE$.
\end{remark}
\subsection{The Local Semiflow on the State Manifold}

\medskip

\noindent
Here we follow the approach introduced in \cite{KPW11} for the two-phase Navier-Stokes problem and in \cite{PSZ10} for the two-phase Stefan problem,
see also \cite{KPW10} for the Mullins-Sekerka problem.

We denote by $\cMH^2(\Omega)$ the closed
$C^2$-hypersurfaces contained in $\Omega$. It can be shown that
$\cMH^2(\Omega)$ is a $C^2$-manifold: the charts are the
parameterizations over a given hypersurface $\Sigma$ according to
Section 3, and the tangent space consists of the normal vector
fields on $\Sigma$. We define a metric on $\cMH^2(\Omega)$ by means
of
$$d_{\cMH^2}(\Sigma_1,\Sigma_2):= d_H(\cN^2\Sigma_1,\cN^2\Sigma_2),$$
where $d_H$ denotes the Hausdorff metric on the compact subsets of $\R^n$ introduced in Section 2.
This way $\cMH^2(\Omega)$ becomes a Banach manifold of class $C^2$.

Let $d_\Sigma(x)$ denote the signed distance for $\Sigma$ as in Section 2.
We may then define the {\em canonical level function} $\varphi_\Sigma$ by means of
$$\varphi_\Sigma(x) = \phi(d_\Sigma(x)),\quad x\in\R^n,$$
where
$$\phi(s)=s \chi(s/a) + (1-\chi(s/a))\,{\rm sgn}\, s ,\quad s\in \R.$$
Then it is easy to see that $\Sigma=\varphi_\Sigma^{-1}(0)$, and
$\nabla \varphi_\Sigma(x)=\nu_\Sigma(x)$, for $x\in \Sigma$.
Moreover, $0$ is an eigenvalue of $\nabla^2\varphi_\Sigma(x)$, and
the remaining eigenvalues of $\nabla^2\varphi_\Sigma(x)$ are the
principal curvatures of $\Sigma$ at $x\in\Sigma$.

If we consider the subset $\cMH^2(\Omega,r)$ of $\cMH^2(\Omega)$ which consists of all closed
hypersurfaces $\Gamma\in \cMH^2(\Omega)$ such that $\Gamma\subset \Omega$ satisfies a
(interior and exterior) ball condition with fixed radius $r>0$, then the map
\begin{equation}
\Upsilon:\cMH^2(\Omega,r)\to C^2(\bar{\Omega}),\quad
\Upsilon(\Gamma):=\varphi_\Gamma,
\end{equation}
is an isomorphism of the metric space $\cMH^2(\Omega,r)$ onto
$\Upsilon(\cMH^2(\Omega,r))\subset C^2(\bar{\Omega})$.
\medskip\\
Let $s-(n-1)/p>2$. Then we define
\begin{equation}
\label{definition-W-r}
W^s_p(\Omega,r):=\{\Gamma\in\cMH^2(\Omega,r): \varphi_\Gamma\in W^s_p(\Omega)\}.
\end{equation}
In this case the local charts for $\Gamma$ can be chosen of class
$W^s_p$ as well. A subset $A\subset W^s_p(\Omega,r)$ is said to be
(relatively) compact, if $\Upsilon(A)\subset W^s_p(\Omega)$ is
(relatively) compact.

As an ambient space for the
state manifold of \eqref{stefan-var} we consider
the product space $C(\bar{G})\times \cMH^2$, due to continuity of temperature
and curvature.

We define the state manifold $\cSM$ for  \eqref{stefan-var} as follows:
\begin{equation}
\label{phasemanifg}
\begin{aligned}
\cSM:=\big\{(u,\Gamma)\in C(\bar{\Omega})\times \cMH^2:
        & \;u\in W^{2-2/p}_p(\Omega\setminus\Gamma),\, \Gamma\in W^{3-3/p}_p ,\\
&\; 0<u<u_c \mbox{ in } \bar{\Omega},\; \partial_\nu u=0 \mbox{ on } \partial\Omega\big\}.
\end{aligned}
\end{equation}
Charts for this manifold are obtained by the charts induced by $\cMH^2(\Omega)$ followed by a Hanzawa transformation
as in Section~3.
Note that there is  no need to incorporate the dummy variable $u_\Gamma$ into the definition of the state manifold, as $u_\Gamma=u|_{\Gamma}$ whenever $u_\Gamma$ appears.

Applying the result in subsection 5.1
and re-parameterizing the interface repeatedly, we see that
(\ref{stefan-var}) yields a local semiflow on $\cSM$.

\begin{theorem}
Let $p>n+2$. Suppose $\gamma>0$ on $(0,u_c)$ and the assumptions of \eqref{regularity-coeff} hold true.

Then problem (\ref{stefan-var}) generates a local semiflow
on the state manifold $\cSM$. Each solution $(u,\Gamma)$ exists on a maximal time
interval $[0,t_*)$, where $t_*=t_*(u_0,\Gamma_0)$.
\end{theorem}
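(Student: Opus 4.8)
The plan is to lift the local well-posedness of Subsection~5.1 from a single Hanzawa chart to a genuine semiflow on the manifold $\cSM$, and then to extend each solution to a maximal interval by uniqueness.

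Fix $(u_0,\Gamma_0)\in\cSM$. Since $\cMH^2(\Omega)$ is a $C^2$-Banach manifold whose charts are the parameterizations over a fixed real analytic (or merely $C^3$) hypersurface $\Sigma$ as in Section~3, and since $\Gamma_0$ satisfies a ball condition with some radius $r>0$ and lies in $W^{3-3/p}_p(\Omega,r)$, we may choose $\Sigma$ so close to $\Gamma_0$ that $d_H(\cN^2\Sigma,\cN^2\Gamma_0)$ is small and $\Gamma_0=\{p+\rho_0(p)\nu_\Sigma(p):p\in\Sigma\}$ for a height function $\rho_0\in W^{3-3/p}_p(\Sigma)$ with $|\rho_0|_{C^1(\Sigma)}\le\eta$, $\eta$ as small as required in Subsection~5.1. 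Setting $v_0:=u_0\circ\Xi_{\rho_0}$ and $v_{\Gamma0}:=v_0|_\Sigma$, the triple $z_0=(v_0,v_{\Gamma0},\rho_0)$ lies in $X_\gamma\hookrightarrow X_{\gamma,\mu}$ and satisfies the compatibilities $[\![v_0]\!]=0$, $v_{\Gamma0}=v_0|_\Sigma$, $\partial_\nu v_0=0$ on $\partial\Omega$ (the last because $\Xi_{\rho_0}=\mathrm{id}$ outside a tubular neighborhood of $\Sigma$, hence away from $\partial\Omega$). Thus $z_0\in\BB=B_{X_{\gamma,\mu}}(z_0,R)$ on which $(A,F)\in C^1(\BB,\cB(X_1,X_0)\times X_0)$ and $A(z)$ has maximal $L_{p,\mu}$-regularity, so by Subsection~5.1 (i.e.\ \cite[Theorem~2.1]{KPW10}) problem \eqref{transformed}/\eqref{abstract} has a unique solution $z=(v,v_\Gamma,\rho)\in H^1_{p,\mu}((0,a);X_0)\cap L_{p,\mu}((0,a);X_1)$, depending continuously on $z_0\in\BB$. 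Undoing the transform, $\Gamma(t):=\{p+\rho(t,p)\nu_\Sigma(p)\}$ and $u(t,\cdot):=v(t,\Xi_{\rho(t)}^{-1}(\cdot))$ yields a local solution of \eqref{stefan-var} with $(u(t),\Gamma(t))\in\cSM$, continuity into $\cSM$ following from \eqref{crucialembedding} and the smooth dependence of $\Xi_\rho$ on $\rho$.

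Next one verifies that the construction is independent of the chart: if $\Sigma'$ is a second admissible reference hypersurface for $\Gamma_0$, the transition map between the two Hanzawa parameterizations is, thanks to $p>n+2$ and $X_{\gamma,\mu}\hookrightarrow BUC^1(\Omega\setminus\Sigma)\times C^1(\Sigma)\times C^2(\Sigma)$, a $C^1$-diffeomorphism on the relevant balls which carries solutions of the $\Sigma$-problem to solutions of the $\Sigma'$-problem; by uniqueness in each chart the resulting pair $(u,\Gamma)$ is the same. Hence $[(u_0,\Gamma_0)\mapsto(u(t),\Gamma(t))]$ is a well-defined local flow on $\cSM$, and it is continuous because data near $(u_0,\Gamma_0)$ can be expressed in the \emph{same} chart $\Sigma$ and the continuous dependence in $X_{\gamma,\mu}$ transfers through the charts. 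Since the local existence time in Subsection~5.1 is uniform over balls in $X_{\gamma,\mu}$, whenever $|\rho(t_1)|_{C^1}$ threatens to leave the admissible range we re-parameterize $\Gamma(t_1)$ over a fresh reference hypersurface and restart; patching the pieces (again gluing by uniqueness) produces the solution on its full interval of existence, and the semiflow identity follows from uniqueness as well. Defining $t_*(u_0,\Gamma_0)$ as the supremum of existence times gives the unique maximal solution on $[0,t_*)$.

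The step I expect to be the main obstacle is the chart-independence and re-parameterization argument: carefully checking that the transition maps between Hanzawa charts act on the maximal-regularity classes $H^1_{p,\mu}((0,a);X_0)\cap L_{p,\mu}((0,a);X_1)$ with the correct regularity and intertwine the transformed problems, so that the local-in-chart result of Subsection~5.1 genuinely assembles into a semiflow on the manifold $\cSM$. The remaining points are routine and follow the template of \cite{PSZ10,KPW11}.
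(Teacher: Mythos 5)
Your proposal is correct and follows exactly the route the paper takes: the paper's own ``proof'' is the single sentence preceding the theorem (apply the local well-posedness of Subsection~5.1 in a Hanzawa chart and re-parameterize the interface repeatedly), and your argument is a faithful, more detailed elaboration of that — including the chart-independence and gluing-by-uniqueness steps the paper leaves implicit and delegates to the template of \cite{PSZ10,KPW11}. No gaps.
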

\medskip

\subsection{Global Existence and Convergence}
There are several
obstructions to global existence for the Stefan problem with variable surface tension
\eqref{stefan-var}:
\begin{itemize}
\item {\em regularity}: the norms of $u(t)$ or $\Gamma(t)$ become unbounded;
\item {\em well-posedness}: the temperature may reach $0$ or $u_c$;
\item {\em geometry}: the topology of the interface changes;\\
    or the interface touches the boundary of $\Omega$;\\
    or the interface contracts to a point.
\end{itemize}
Let $(u,\Gamma)$ be a solution in the state manifold $\cSM$.
By a {\em uniform ball condition} we mean the existence of a radius
$r_0>0$ such that for each $t$, at each point $x\in\Gamma(t)$ there
exist centers $x_i\in \Omega_i(t)$ such that $B_{r_0}(x_i)\subset
\Omega_i$ and $\Gamma(t)\cap \bar{B}_{r_0}(x_i)=\{x\}$, $i=1,2$.
Note that this condition bounds the curvature of $\Gamma(t)$,
prevents it from shrinking to a point, from touching the outer
boundary $\partial \Omega$, and from undergoing topological changes.

With this property, combining the semiflow for (\ref{stefan-var}) with
the Lyapunov functional and compactness we obtain the following
result.
\begin{theorem}
\label{Qual}
Let $p>n+2$.
Suppose $\gamma>0$ on $(0,u_c)$ and the assumptions of \eqref{regularity-coeff} hold true.
Suppose that $(u,\Gamma)$ is a solution of
(\ref{stefan-var}) in the state manifold $\cSM$ on its maximal time interval $[0,t_*)$.
Assume the following on $[0,t_*)$: there is a constant $M>0$ such that
\begin{itemize}
\item[(i)] $|u(t)|_{W^{2-2/p}_p}+|\Gamma(t)|_{W^{3-3/p}_p}\leq M<\infty$;
\vspace{1mm}
\item[(ii)] $0<1/M\leq u(t)  \le u_c-1/M$;
\vspace{1mm}
\item[(iii)] $\Gamma(t)$ satisfies a uniform ball condition.
\end{itemize}
Then $t_*=\infty$, i.e.\ the solution exists globally, and
it converges in $\cSM$
to some equilibrium $(u_\infty,\Gamma_\infty)\in\cE$. On the contrary, if $(u(t),\Gamma(t))$ is a global solution in $\cSM$ which converges to an equilibrium $(u_*,\Gamma_*)$ in $\cSM$ as $t\to\infty$, then properties $(i)$-$(iii)$ are valid.
\end{theorem}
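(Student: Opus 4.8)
The plan is to follow the route now standard for thermodynamically consistent phase transition problems: combine the local semiflow of Subsection~5.1 with the conserved energy and the strict Lyapunov functional of Section~2 and with the convergence-to-equilibria results of \cite{PSZ09}, adapting the argument given in \cite{PSZ10} for constant surface tension. For the \emph{global existence} part, suppose $t_*<\infty$. For $t$ close to $t_*$ I would re-parameterize $\Gamma$ over a fixed real analytic reference hypersurface $\Sigma$ close to $\Gamma(t)$ as in Section~3; by the uniform ball condition (iii) this can be done with Hanzawa data of uniformly controlled $C^2$-size, by (ii) the temperature stays in a fixed compact subinterval of $(0,u_c)$ where all coefficient functions satisfy \eqref{regularity-coeff} with uniform ellipticity, and by (i) the transformed state $z(t)=(v(t),v_\Gamma(t),\rho(t))$ stays in a fixed bounded subset of the trace space $X_\gamma$. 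Then \cite[Theorem~2.1]{KPW10}, applied to \eqref{abstract} as in Subsection~5.1, yields a lower bound $\tau_0>0$ on the length of the existence interval that depends only on these uniform bounds; continuing the solution from some time $t$ with $t_*-t<\tau_0$ contradicts maximality, so $t_*=\infty$.

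The second step is \emph{precompactness} of the orbit and identification of the $\omega$-limit set. Using (i), parabolic regularization for \eqref{abstract} on the unit time windows $[t,t+1]$, the compact Sobolev embeddings of $W^{2-2/p}_p$ into $W^{2-2/p-\varepsilon}_p$ on $\Omega\setminus\Sigma$ and of $W^{3-3/p}_p$ into $W^{3-3/p-\varepsilon}_p$ on $\Sigma$, and again (iii) to pass between the moving domains $\Omega\setminus\Gamma(t)$ and a fixed configuration, I would show that $\{(u(t),\Gamma(t)):t\ge1\}$ is relatively compact in $\cSM$. Since by Section~2 the total energy ${\sf E}$ is conserved and $-\Phi$ is a strict Lyapunov functional, $\Phi$ is bounded and monotone along the (precompact) orbit, hence convergent, and therefore constant on the nonempty, compact, connected, invariant $\omega$-limit set $\omega(u_0,\Gamma_0)$. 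By part~(c) of Section~2 every point of $\omega(u_0,\Gamma_0)$ must then be an equilibrium, so $\omega(u_0,\Gamma_0)\subset\cE$, lying moreover in the single energy level $\{{\sf E}={\sf E}_0\}\cap\cE$.

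For the \emph{convergence} to a single equilibrium I would pick $z_*\in\omega(u_0,\Gamma_0)$; by \thmref{lin-stability} together with the linearization of Section~4 it is normally hyperbolic (the borderline case $\zeta_*=1$, respectively $\eta_*=1$, being excluded as in the Remark after \thmref{lin-stability}). Precompactness of the orbit together with $\omega(u_0,\Gamma_0)\subset\cE$ forces ${\rm dist}_{\cSM}(z(t),\cE)\to0$, so for large $t$ the solution stays in an arbitrarily small neighbourhood of $\cE$ and returns arbitrarily close to $z_*$. Applying, after re-parameterizing the interface over $\Gamma_*$, the convergence theorems \cite[Theorems~2.1 and 6.1]{PSZ09} near $z_*$ exactly as in the proof of \thmref{nonlinstab}, any solution with this property converges at an exponential rate in $X_\gamma$, hence in $\cSM$, to some $z_\infty\in\cE$; therefore $\omega(u_0,\Gamma_0)=\{z_\infty\}$ and $(u(t),\Gamma(t))\to(u_\infty,\Gamma_\infty)$ in $\cSM$. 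I expect the main obstacle to be exactly these last two points — establishing precompactness in the topology of $\cSM$ and excluding an oscillating enter--leave behaviour near $\cE$; both are handled as in \cite{PSZ10}, the only genuinely new bookkeeping being that caused by the surface heat equation on $\Gamma$.

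Finally, for the \emph{converse}: if $(u(t),\Gamma(t))$ is global and converges in $\cSM$ to $(u_*,\Gamma_*)\in\cE$, then (i) holds because a convergent curve in $W^{2-2/p}_p(\Omega\setminus\Gamma)\times W^{3-3/p}_p$ is bounded, (ii) holds because $u_*$ is a constant in $(0,u_c)$ and $u(t)\to u_*$ uniformly on $\bar\Omega$, and (iii) holds because $\Gamma_*$, a finite union of disjoint spheres contained in $\Omega$, satisfies an interior and exterior ball condition with some radius $r_*>0$, while convergence $\Gamma(t)\to\Gamma_*$ in $\cMH^2$ controls the second order bundle, hence the ball-condition radius, of $\Gamma(t)$; thus $\Gamma(t)$ satisfies the uniform ball condition with radius $r_*/2$ for $t$ large, and on any compact initial interval with a radius bounded below by continuity.
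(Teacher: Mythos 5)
Your proposal is correct and follows essentially the same route as the paper: Hanzawa re-parameterization over finitely many fixed reference hypersurfaces, relative compactness of the orbit in $\cSM$ from (i)--(iii) and parabolic smoothing, the strict Lyapunov functional to place the $\omega$-limit set in $\cE$, and then Theorem~\ref{nonlinstab} (i.e.\ \cite{PSZ09}) to upgrade approach to $\cE$ into convergence to a single equilibrium. The only point to tighten is in your first step: a uniform lower bound $\tau_0$ on the existence time does not follow from mere boundedness of $z(t)$ in $X_\gamma$, since the existence time in \cite[Theorem~2.1]{KPW10} depends on the initial datum itself; one needs the compact embedding of $X_\gamma$ into the weighted trace space $X_{\gamma,\mu}$, a finite covering, and continuous dependence --- exactly the ingredients you invoke in your second step and which the paper uses to obtain the common interval $(0,\tau]$ before concluding $t_*=\infty$.
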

\begin{proof}
Assume that assertions (i)--(iii) are valid. Then
$\Gamma([0,t_*))\subset W^{3-3/p}_p(\Omega,r)$ is bounded, hence
relatively compact in $W^{3-3/p-\ve}_p(\Omega,r)$.
Thus we may cover this set by finitely many balls with centers $\Sigma_k$ real analytic in such a way that
${\rm dist}_{W^{3-3/p-\ve}_p}(\Gamma(t),\Sigma_j)\leq \delta$ for some $j=j(t)$, $t\in[0,t_*)$. Let $J_k=\{t\in[0,t_*):\, j(t)=k\}$. Using for each $k$ a Hanzawa-transformation $\Xi_k$, we see that the pull backs $\{u(t,\cdot)\circ\Xi_k:\, t\in J_k\}$ are bounded in $W^{2-2/p}_p(\Omega\setminus \Sigma_k)$, hence relatively compact in $W^{2-2/p-\ve}_p(\Omega\setminus\Sigma_k)$. Employing now the results in subsection 5.1  we obtain solutions
 $(u^1,\Gamma^1)$ with initial configurations $(u(t),\Gamma(t))$ in the state manifold on a common
time interval, say $(0,\tau]$, and by uniqueness we have
$$(u^1(\tau),\Gamma^1(\tau))=(u(t+\tau),\Gamma(t+\tau)).$$
Continuous dependence implies then relative compactness of
$(u(\cdot),\Gamma(\cdot))$ in $\cSM$. In particular,
$t_*=\infty$ and the orbit $(u,\Gamma)(\R_+)\subset\cSM$ is
relatively compact. The negative total entropy is a strict Lyapunov
functional, hence the limit set $\omega(u,\Gamma)\subset
\cSM$ of a solution is contained in the set $\cE$ of
equilibria.  By compactness $\omega(u,\Gamma)\subset \cSM$ is
non-empty, hence the solution comes close to $\cE$, and stays there.
Then we may apply the convergence result Theorem \ref{nonlinstab}.
The converse is proved by a compactness argument.
\end{proof}

\section{The Semiflow without Kinetic Undercooling}
In this section we assume throughout $\gamma(s)=0$ for all $s>0$, i.e\ kinetic undercooling is absent. In this case we may apply the results in \cite{PSZ09} and \cite{KPW10} too, but we have to work harder to apply them. 
At first we prove \eqref{mcflow} as follows.
According to \eqref{T-Gamma-V} we know that 
\begin{equation*}
 T_\Gamma(u_\Gamma)V:=(\omega_\Gamma(u_\Gamma)-\cH^\prime(\Gamma))V
=\frac{\lambda^\prime(u_\Gamma)}{\kappa_\Gamma(u_\Gamma)}
\big\{{\rm div}_\Gamma (d_\Gamma(u_\Gamma)\nabla_\Gamma u_\Gamma) + [\![d(u)\partial_\nu u]\!]\big\}.
\end{equation*}
Next we observe
\begin{align*}
\frac{\lambda^\prime(u_\Gamma)}{\kappa_\Gamma(u_\Gamma)}& {\rm div}_\Gamma( d_\Gamma(u_\Gamma)\nabla_\Gamma u_\Gamma)\\
&= \frac{1}{\kappa_\Gamma(u_\Gamma)}{\rm div}_\Gamma (d_\Gamma(u_\Gamma)\nabla_\Gamma \lambda(u_\Gamma))- \frac{d_\Gamma(u_\Gamma)}{\kappa_\Gamma(u_\Gamma)}\lambda^{\prime\prime}(u_\Gamma)|\nabla_\Gamma u_\Gamma|^2\\
& = {\rm div}_\Gamma \Big(\frac{d_\Gamma(u_\Gamma)}{\kappa_\Gamma(u_\Gamma)}\nabla_\Gamma \lambda(u_\Gamma)\Big)-
\frac{d_\Gamma(u_\Gamma)}{\kappa_\Gamma(u_\Gamma)}
\Big\{\lambda^{\prime\prime}(u_\Gamma)-\lambda^\prime(u_\Gamma)\frac{\kappa^\prime_\Gamma(u_\Gamma)}{\kappa_\Gamma(u_\Gamma)}\Big\}
|\nabla_\Gamma u_\Gamma|^2\\
&= \Delta_\Gamma h_\Gamma(u_\Gamma)
-\frac{d_\Gamma(u_\Gamma)}{\kappa_\Gamma(u_\Gamma)}
\Big\{\lambda^{\prime\prime}(u_\Gamma)-\lambda^\prime(u_\Gamma)\frac{\kappa^\prime_\Gamma(u_\Gamma)}{\kappa_\Gamma(u_\Gamma)}\Big\}
|\nabla_\Gamma u_\Gamma|^2\\\end{align*}
where $h_\Gamma$ denotes the antiderivative of $d_\Gamma \lambda^\prime/\kappa_\Gamma$ with $h_\Gamma(u_m)=0$.
We note that by a partial integration
$$
h_\Gamma(s)= \lambda(s) \frac{d_\Gamma(s)}{\kappa_\Gamma(s)} -\int_{u_m}^s \lambda(\tau) (\frac{d_\Gamma}{\kappa_\Gamma})^\prime(\tau)d\tau=: \lambda(s) \frac{d_\Gamma(s)}{\kappa_\Gamma(s)}-f_\Gamma(s).
$$
Now employing $\lambda(u_\Gamma)=-\cH(\Gamma)$ leads to the identity
\begin{align*}
&T_\Gamma(u_\Gamma)\{V-\frac{d_\Gamma(u_\Gamma)}{\kappa_\Gamma(u_\Gamma)} \cH(\Gamma)-f_\Gamma(u_\Gamma)\}\\
&=\frac{ \lambda^\prime(u_\Gamma)}{\kappa_\Gamma(u_\Gamma)}[\![d(u)\partial_\nu u]\!]
-\frac{d_\Gamma(u_\Gamma)}{\kappa_\Gamma(u_\Gamma)}\{\lambda^{\prime\prime}(u_\Gamma)-\lambda^\prime(u_\Gamma)\frac{\kappa_\Gamma^\prime(u_\Gamma)}{\kappa_\Gamma(u_\Gamma)}\}|\nabla_\Gamma u_\Gamma|^2\\
&
+[\,\omega_\Gamma(u_\Gamma) -{\rm tr}L_\Gamma^2\,]h_\Gamma(u_\Gamma),
\end{align*}
hence applying the inverse of $T_\Gamma(u_\Gamma)$ we arrive at
\begin{equation}
\label{mcflow-2}
\kappa_\Gamma(u_\Gamma)V -d_\Gamma(u_\Gamma)\cH(\Gamma) = \kappa_\Gamma(u_\Gamma)\{f_\Gamma(u_\Gamma) + F_\Gamma(u,u_\Gamma)\},
\end{equation}
where
\begin{align*}
F_\Gamma(u,u_\Gamma)&=[\kappa_\Gamma(u_\Gamma)T_\Gamma(u_\Gamma)]^{-1}
\big\{ \lambda^\prime(u_\Gamma)[\![d(u)\partial_\nu u]\!]\\
&-d_\Gamma(u_\Gamma)
[(\lambda^{\prime\prime}(u_\Gamma)-\lambda^\prime(u_\Gamma)\kappa_\Gamma^\prime(u_\Gamma)/\kappa_\Gamma(u_\Gamma)]|\nabla_\Gamma u_\Gamma|^2\\
&+\kappa_\Gamma(u_\Gamma) [\,\omega_\Gamma(u_\Gamma) -{\rm tr}L_\Gamma^2\,]h_\Gamma(u_\Gamma)\big\}.
\end{align*}
In the sequel we will replace the Gibbs-Thomson law by the dynamic equation \eqref{mcflow-2} plus the compatibility condition $\varphi(u_{\Gamma0})+\sigma(u_{\Gamma0})\cH(\Gamma_0)=0$
at time $t=0$.

\subsection{Local Well-Posedness}

\medskip

\noindent
To prove local well-posedness we employ the direct mapping method as introduced in Section 3. As base space we use as in Section 5
$$ X_0= L_p(\Omega)\times W^{-1/p}_p(\Sigma)\times W^{1-1/p}_p(\Sigma),$$
and we let $X_1$, $X_\gamma$ and  $X_{\gamma,\mu}$ as defined there.

We rewrite system \eqref{transformed} abstractly as the quasilinear problem in $X_0$
\begin{align}\label{abstract0}
\dot{z}+A_0(z)z&= F_0(z),\quad z(0)=z_0,
\end{align}
where $z=(v,v_\Gamma,\rho)$ and $z_0=(v_0,v_{\Gamma0},\rho_0)$. Here the quasilinear part $A_0(z)$
is the diagonal matrix operator defined by
\begin{equation*}
\label{A(z)0}
-A_0(z)={\rm diag}
\left[\begin{aligned}
&(d(v)/\kappa(v))(\Delta - M_2(\rho):\nabla^2) \\
&(d_\Gamma(v_\Gamma)/\kappa_\Gamma(v_\Gamma))(P_\Gamma(\rho)M_0(\rho))^2:\nabla_\Sigma^2 \\
&(d_\Gamma(v_\Gamma)/\kappa_\Gamma(v_\Gamma))\cG(\rho):\nabla_\Sigma^2
 \end{aligned}\right]
\end{equation*}
with
$M_2(\rho)=M_1(\rho)+M_1^{\sf T}(\rho) -M_1(\rho)M_1^{\sf T}(\rho)$.
The semilinear part $F_0(z)$ is given by
\begin{equation*}
\label{F(z)0}
\left[
\begin{aligned}
&\cR(\rho)v+\frac{1}{\kappa(v)}\big\{d^\prime(v)|(I-M_1(\rho))\nabla v\big|^2
 - d(v)((I-M_1(\rho)):\nabla M_1(\rho)|\nabla v)\big\}\\
&\frac{1}{\kappa_\Gamma(v_\Gamma)}
\big\{-\cB(v_\Gamma,\rho)v
-[l(v_\Gamma) +l_\Gamma(v_\Gamma)\cH(\rho)]
\beta(\rho)\partial_t\rho + m_3 \big\} \\
&(d_\Gamma(v_\Gamma) /\kappa_\Gamma(v_\Gamma))\cF(\rho)
+ \big\{f_\Gamma(v_\Gamma)+ F_\Gamma(v,v_\Gamma,\rho)\big\}/\beta(\rho)
\end{aligned}
\right]
\end{equation*}
where by abuse of notation $F_\Gamma$ here means the transformed $F_\Gamma$ introduced previously,
and where
\begin{equation*}
m_3=-d_\Gamma(v_\Gamma)(P_\Gamma(\rho)M_0(\rho))^2:\nabla_\Sigma^2v_\Gamma
-\cC(v_\Gamma,\rho)v_\Gamma.
\end{equation*}
Again, the first two components of $F_0(z)$ contain the time derivative $\partial_t\rho$. We replace it by
the transformed version of \eqref{mcflow-2}
\begin{equation*}
\partial_t\rho = \big\{f_\Gamma(v_\Gamma)+F_\Gamma(v,v_\Gamma,\rho)
+d_\Gamma(v_\Gamma)/\kappa_\Gamma(v_\Gamma)\cH(\rho)\big\}/\beta(\rho),
\end{equation*}
to see that it leads to a lower order term, as in Section 5.

Provided that
$T_{\Gamma_0}(v_{\Gamma0})$ is invertible we may proceed as in Section 5,
applying  Theorem 2.1 in \cite{KPW10}, to obtain local well-posedness, i.e.\ a unique local solution
$$z\in H^1_{p,\mu}((0,a);X_0)\cap L_{p,\mu}((0,a);X_1)\hookrightarrow C([0,a];X_{\gamma,\mu})\cap C((0,a];X_\gamma)$$
which depends continuously on the initial value $z_0\in \BB$.
The resulting solution map $[z_0\mapsto z(t)]$ defines a local semiflow in $X_{\gamma,\mu}$.

\subsection{Nonlinear Stability of Equilibria}

\medskip

\noindent
Let $e_*=(u_*,u_{\Gamma*},\Gamma_*)$ denote an equilibrium as in Section 4. In this case we choose $\Sigma=\Gamma_*$ as a 
reference manifold, and
as shown in the previous subsection
we obtain the abstract quasilinear parabolic problem
\begin{align}\label{abstract0*}
\dot{z}+A_0(z)z&= F_0(z),\quad z(0)=z_0,
\end{align}
with $X_{0}$, $X_{1}$, $X_{\gamma}$ as above. We set $z_*=(u_*,u_{\Gamma*},0)$. Assuming well-posedness and 
$\zeta_*\neq 1$ in the stability condition, we have shown in Section 4 that the equilibrium $e_*$ is normally hyperbolic. Therefore we may apply once more \cite{PSZ09}, Theorems 2.1 and 6.1 to obtain the following result.
\goodbreak
\begin{theorem}
\label{nonlinstab0}
Let $p>n+2$.
Suppose $\gamma\equiv 0$, $\sigma\in C^4(0,u_c)$,
and the assumptions of \eqref{regularity-coeff} hold true.
 As above $\cE$ denotes the set of equilibria of \eqref{abstract*}, and we fix some $z_*\in\cE$.
Assume that the well-posedness condition
\begin{equation}\label{wellp0}
l_*\neq 0\quad \mbox{ and } \quad  u_*l_*^2/\sigma_*\neq \kappa_{\Gamma*}(n-1)/R_*^2
\end{equation}
is satisfied. Then we have
\vspace{1mm}\\
{\bf (a)} If $\Gamma_*$ is connected and $\zeta_*<1$, or if $\kappa_{\Gamma*}(n-1)/R_*^2>u_*l_*^2/\sigma_*$ then $z_*$ is stable in $X_\gamma$, and there exists $\delta>0$ such
that the unique solution $z(t)$ of (\ref{abstract*}) with initial
value $z_0\in X_\gamma$ satisfying $|z_0-z_*|_{\gamma}<\delta$
exists on $\R_+$ and converges at an exponential rate in $X_\gamma$
to some $z_\infty\in\cE$ as $t\rightarrow\infty$.
\vspace{1mm}\\
{\bf (b)} If $\kappa_{\Gamma*}(n-1)/R_*^2<u_*l_*^2/\sigma_*$, and  if $\Gamma_*$ is disconnected or if $\zeta_*>1$ then $z_*$ is unstable in $X_\gamma$ and even in $X_0$.
For each sufficiently small $\rho>0$ there is $\delta\in(0,\rho]$ such that the solution $z(t)$ of \eqref{abstract*} with initial
value $z_0\in X_\gamma$ subject to $|z_0-z_*|_{\gamma}<\delta$ either satisfies
\begin{itemize}
\item[(i)] ${\rm dist}_{X_\gamma}(z(t_0);\cE)>\rho$ for some finite time $t_0>0$; or
\vspace{1mm}
\item[(ii)] $z(t)$ exists on $\R_+$ and converges at exponential rate in $X_\gamma$ to some $z_\infty\in\cE$.
\end{itemize}
\end{theorem}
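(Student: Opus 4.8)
The plan is to reproduce the argument of Theorem~\ref{nonlinstab}, i.e.\ to invoke the generalized principle of linearized stability of \cite{PSZ09}, but now on the basis of the reduction of \eqref{stefan-var} carried out at the beginning of this section, in which the Gibbs-Thomson law is traded for the dynamic equation \eqref{mcflow-2} together with the compatibility condition $\varphi(u_{\Gamma0})+\sigma(u_{\Gamma0})\cH(\Gamma_0)=0$ at $t=0$. First I would check that this reduction, and hence the abstract formulation \eqref{abstract0*}, is available in a neighborhood of $z_*$. At the equilibrium one has $T_{\Gamma_*}(u_{\Gamma*})=\omega_\Gamma(u_*)-(n-1)/R_*^2-\Delta_*$ with $\omega_\Gamma(u_*)=u_*l_*^2/(\sigma_*\kappa_{\Gamma*})>0$, so its eigenvalues are $\omega_\Gamma(u_*)-(n-1)/R_*^2$ on the constants and positive numbers on the remaining spherical harmonic modes; by \eqref{wellp0} the former is non-zero, hence $T_{\Gamma_*}(u_{\Gamma*})$ is invertible in $L_2(\Gamma_*)$, and by continuity $T_{\Gamma_0}(v_{\Gamma0})$ stays invertible for $(v_{\Gamma0},\Gamma_0)$ close to $(u_{\Gamma*},\Gamma_*)$. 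Thus the local well-posedness of Subsection~6.1 applies near $z_*$, one verifies that $z_*=(u_*,u_{\Gamma*},0)$ is an equilibrium of \eqref{abstract0*}, and a solution of \eqref{abstract0*} near $z_*$ is a genuine solution of \eqref{stefan-var}: the Gibbs-Thomson law is recovered since the defect $w:=v_\Gamma-((\sigma_*/l_*)A_*\rho+g/l_*)$ solves the homogeneous parabolic problem \eqref{w-equation} with $w(0)=0$, hence vanishes identically.

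Second, I would verify the two structural hypotheses of \cite[Theorem~2.1]{PSZ09} and \cite[Theorem~6.1]{PSZ09}. For the smoothness and maximal-regularity hypothesis one argues as in Subsection~6.1: under \eqref{regularity-coeff} and the additional assumption $\sigma\in C^4(0,u_c)$ --- needed because $f_\Gamma$ and the nonlocal lower-order term $F_\Gamma$ involve $\lambda=\varphi/\sigma$ up to its second derivative --- one obtains $(A_0,F_0)\in C^1(\BB,\cB(X_1,X_0)\times X_0)$ on a ball $\BB\subset X_{\gamma,\mu}$ centered at $z_*$, and each $A_0(z)$, being a small perturbation of a constant-coefficient diagonal parabolic operator, has maximal regularity of type $L_p$ and $L_{p,\mu}$. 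For the spectral hypothesis, the linearization of \eqref{abstract0*} at $z_*$ is governed by the operator $L_0$ of Subsection~4.1: by Theorem~\ref{MR-0} (applicable thanks to \eqref{wellp0}) $-L_0$ generates an analytic $C_0$-semigroup on $X_0$ with maximal $L_p$-regularity, and Theorem~\ref{lin-stability} provides exactly the spectral splitting required in \cite{PSZ09} --- namely $0$ is a semisimple eigenvalue of $L_0$ whose eigenspace is isomorphic to $T_{(u_*,\Gamma_*)}\cE$, the tangent space of the $(mn+1)$-dimensional equilibrium manifold $\cE$, while the rest of the (real) spectrum of $-L_0$ is entirely negative in the situations of case~(a) ($\Gamma_*$ connected with $\zeta_*<1$, or $\eta_*>1$) and contains at least one positive point --- in fact $m$ or $m-1$ --- in the situations of case~(b) ($\eta_*<1$ together with $\Gamma_*$ disconnected or $\zeta_*>1$). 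Hence $z_*$ is normally stable in case~(a) and normally hyperbolic with non-empty unstable part in case~(b).

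With these two points established, assertion~(a) is \cite[Theorem~2.1]{PSZ09}: $z_*$ is stable in $X_\gamma$, and every solution of \eqref{abstract0*} with $|z_0-z_*|_\gamma<\delta$ exists globally and converges at an exponential rate in $X_\gamma$ to some $z_\infty\in\cE$. Assertion~(b) is \cite[Theorem~6.1]{PSZ09}: $z_*$ is unstable in $X_\gamma$, and even in $X_0$, and every solution with $|z_0-z_*|_\gamma<\delta$ either leaves a fixed $\rho$-neighborhood of $\cE$ in finite time or exists globally and converges exponentially to some $z_\infty\in\cE$. No fresh description of $\cE$ is needed, since the equilibria are again exactly those of \eqref{equilibria-I}.

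I expect the main difficulty --- and the only genuinely new point compared to the case $\gamma>0$ --- to be making the nonlocal $\gamma=0$ reduction fit the abstract framework: one must show that $T_\Gamma(u_\Gamma)^{-1}$ (equivalently, at the linear level, the inverse of $A_0=l_*u_*+(\kappa_{\Gamma*}\sigma_*/l_*)A_*$) is well-defined and depends in a $C^1$ fashion on the full state throughout a neighborhood of $z_*$ in $X_{\gamma,\mu}$, and that the resulting semilinear term $F_0$ --- which carries the lower-order but nonlocal contribution $F_\Gamma$ assembled from $T_\Gamma^{-1}$, $|\nabla_\Gamma u_\Gamma|^2$ and ${\rm tr}\,L_\Gamma^2$ --- maps into $X_0$ with a $C^1$ dependence. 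This is precisely where the well-posedness condition \eqref{wellp0} and the extra regularity $\sigma\in C^4$ are used; once it is in place, the remainder of the argument runs parallel to the case with kinetic undercooling.
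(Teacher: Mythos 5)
Your proposal is correct and follows essentially the same route as the paper: the paper's own argument is precisely to invoke the reduction of Subsection~6.1 (with $\Sigma=\Gamma_*$), use the spectral analysis of Section~4 (Theorems~\ref{MR-0} and \ref{lin-stability}) to conclude that $z_*$ is normally stable resp.\ normally hyperbolic under \eqref{wellp0} and $\zeta_*\neq 1$, and then apply Theorems~2.1 and 6.1 of \cite{PSZ09}. Your write-up is in fact more detailed than the paper's one-paragraph justification, in particular in spelling out why \eqref{wellp0} makes $T_{\Gamma_*}(u_{\Gamma*})$ invertible and where $\sigma\in C^4$ enters.
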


Thus the only cases which are excluded are $\zeta_*=1$, and the two values where the well-posedness condition \eqref{wellp0} is violated.

\subsection{The Local Semiflow on the State Manifold}

\medskip

\noindent
We define the state manifolds $\cSM_0$ for  \eqref{stefan-var} in case $\gamma\equiv0$ as follows.
\begin{equation}
\label{phasemanif0}
\begin{aligned}
\cSM_0:=\big\{(&u,\Gamma)\in C(\bar{\Omega})\times \cMH^2:
 u\in W^{2-2/p}_p(\Omega\setminus\Gamma),\, \Gamma\in W^{3-3/p}_p ,\\
&
0<u<u_c \mbox{ in } \bar{\Omega},\;  \partial_\nu u=0 \mbox{ on } \partial\Omega, \\
&\lambda(u_\Gamma)+\cH(\Gamma)=0\mbox{ on } \Gamma,\; T_\Gamma(u_\Gamma)\; \mbox{ is invertible in}\; L_2(\Gamma)\big\}.
\end{aligned}
\end{equation}
Charts for this manifold are obtained by the charts induced by $\cMH^2(\Omega)$
followed by a Hanzawa transformation as in Section~3.

Applying the result of subsection 6.1
and re-parameterizing the interface repeatedly, we see that
(\ref{stefan-var}) with $\gamma\equiv0$ yields a local semiflow on $\cSM_0$.

\begin{theorem}
Let $p>n+2$.
Suppose $\gamma\equiv 0$, $\sigma\in C^4(0,u_c)$,
and the assumptions of \eqref{regularity-coeff} hold true.

Then problem (\ref{stefan-var}) generates a local semiflow
on the state manifold $\cSM_0$. Each solution $(u,\Gamma)$ exists on a maximal time
interval $[0,t_*)$, where $t_*=t_*(u_0,\Gamma_0)$.
\end{theorem}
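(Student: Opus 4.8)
The plan is to follow the pattern of Subsection 5.3, with the local well-posedness result of Subsection 6.1 replacing that of Subsection 5.1. Recall that $\cSM_0$ is a $C^2$-manifold whose charts are obtained by parameterizing $\cMH^2(\Omega)$ over a fixed real analytic reference surface $\Sigma$ and following this by a Hanzawa transformation as in Section 3; in such a chart a configuration $(u,\Gamma)\in\cSM_0$ near $\Sigma$ corresponds to a triple $z=(v,v_\Gamma,\rho)$ with $v_\Gamma=v_{|_\Sigma}$. Fixing an initial datum $(u_0,\Gamma_0)\in\cSM_0$, I would first approximate $\Gamma_0$ by a real analytic $\Sigma$ with $d_H(\cN^2\Sigma,\cN^2\Gamma_0)$ as small as desired, write $\Gamma_0$ as a graph $\rho_0$ over $\Sigma$ with $|\rho_0|_{C^1(\Sigma)}$ small, set $v_0=u_0\circ\Xi_{\rho_0}$, $v_{\Gamma0}=v_{0|_\Sigma}$, and fix a small ball $\BB=B_{X_{\gamma,\mu}}(z_0,R)$ around $z_0=(v_0,v_{\Gamma0},\rho_0)$.

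The next step is to transfer the well-posedness condition. By definition of $\cSM_0$ the operator $T_{\Gamma_0}(u_{0|\Gamma_0})$ is invertible in $L_2(\Gamma_0)$; since $T_\Gamma(u_\Gamma)$ depends continuously on the configuration and $\Sigma$ can be taken as close to $\Gamma_0$ as we please, the transformed operator $T_{\Sigma}(v_\Gamma)$ entering the abstract problem \eqref{abstract0} is invertible for every $z=(v,v_\Gamma,\rho)\in\BB$, after shrinking $R$. Then Subsection 6.1 applies essentially verbatim: $(A_0,F_0)\in C^1(\BB,\cB(X_1,X_0)\times X_0)$ — here the hypothesis $\sigma\in C^4(0,u_c)$ is used, since $F_0$ involves $f_\Gamma$ and $F_\Gamma$, hence $\lambda''$ and $(d_\Gamma/\kappa_\Gamma)'$ — and $A_0(z)$ has maximal $L_{p,\mu}$-regularity, being a small perturbation of a diagonal operator with Laplacian principal parts. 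Invoking \cite[Theorem 2.1]{KPW10} yields, for each $z_0\in\BB$, a unique local solution $z\in H^1_{p,\mu}((0,a);X_0)\cap L_{p,\mu}((0,a);X_1)\hookrightarrow C([0,a];X_{\gamma,\mu})\cap C((0,a];X_\gamma)$, depending continuously on $z_0$.

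It then remains to show that, transformed back, $(u,\Gamma)$ solves \eqref{stefan-var} with $\gamma\equiv0$ and stays in $\cSM_0$. The regularity $u\in W^{2-2/p}_p(\Omega\setminus\Gamma)$, $\Gamma\in W^{3-3/p}_p$, the bounds $0<u<u_c$, the continuity of $u$ and of $\cH$, and the boundary condition $\partial_\nu u=0$ all follow from the embedding \eqref{crucialembedding} and the openness of the defining conditions on a (possibly shorter) interval; invertibility of $T_\Gamma(u_\Gamma)$ persists near $t=0$ for the same reason. The genuinely new point — and the step I expect to be the main obstacle — is that the Gibbs--Thomson relation $\lambda(u_\Gamma)+\cH(\Gamma)=0$, which was replaced in \eqref{abstract0} by the dynamic equation \eqref{mcflow-2}, has to be recovered. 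For this I would set $w:=\lambda(u_\Gamma)+\cH(\Gamma)$ and, retracing the computation leading from \eqref{T-Gamma-V} to \eqref{mcflow}, show that \eqref{mcflow-2} forces $w$ to satisfy a linear parabolic equation on $\Gamma(t)$ with $w(0)=0$ — the latter by virtue of the compatibility condition $\varphi(u_{\Gamma0})+\sigma(u_{\Gamma0})\cH(\Gamma_0)=0$ built into $\cSM_0$ — so that $w\equiv0$ by uniqueness; hence $(u(t),\Gamma(t))\in\cSM_0$ throughout the interval of existence.

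Finally, the local semiflow on $\cSM_0$ is produced by the standard re-parameterization procedure: as $\rho(t)$ grows it may exit the chart around $\Sigma$, but one then picks a new real analytic reference surface $\Sigma'$ close to $\Gamma(t_1)$ and continues; uniqueness guarantees that the pieces agree on overlaps, and iteration gives a solution on a maximal interval $[0,t_*)$ with $t_*=t_*(u_0,\Gamma_0)$. Continuous dependence on the initial configuration together with uniqueness then yield the semiflow (cocycle) property on $\cSM_0$.
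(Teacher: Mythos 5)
Your proposal is correct and follows essentially the same route as the paper: charts via the Hanzawa transform, the local well-posedness result of Subsection~6.1 applied on a small ball where invertibility of $T_\Gamma(u_\Gamma)$ persists, recovery of the Gibbs--Thomson relation from the dynamic equation \eqref{mcflow-2} together with the compatibility condition at $t=0$ (paralleling the $w$-equation argument in the linear case), and repeated re-parameterization to obtain the maximal interval and the semiflow property. In fact you supply more detail than the paper, which compresses all of this into a reference to Subsection~6.1 and the re-parameterization procedure.
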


\medskip

\subsection{Global Existence and Convergence}

\medskip

In addition to the
obstructions to global existence for the Stefan problem with variable surface tension in the presence of kinetic undercooling there is an additional possibility for loss of well-posedeness:
\begin{itemize}
\item {\em regularity}: the norms of $u(t)$ or $\Gamma(t)$ become unbounded;
\item {\em well-posedness}: the temperature may reach $0$ or $u_c$; or\\ $T_\Gamma(u_\Gamma)$ may become non-invertible;
\item {\em geometry}: the topology of the interface changes;\\
    or the interface touches the boundary of $\Omega$;\\
    or the interface contracts to a point.
\end{itemize}
We set $\cE_0=\cSM_0\cap \cE$.
As in Section 5, combining the semiflow for (\ref{stefan-var}) with
the Lyapunov functional and compactness we obtain the following
result.
\begin{theorem}
\label{Qual0}
Let $p>n+2$.
Suppose $\gamma\equiv 0$, $\sigma\in C^4(0,u_c)$,
and the assumptions of \eqref{regularity-coeff} hold true.
Suppose that $(u,\Gamma)$ is a solution of
(\ref{stefan-var}) in the state manifold $\cSM_0$ on its maximal time interval $[0,t_*)$.
Assume the following on $[0,t_*)$: there is a constant $M>0$ such that
\begin{itemize}
\item[(i)] $|u(t)|_{W^{2-2/p}_p}+|\Gamma(t)|_{W^{3-3/p}_p}\leq M<\infty$;
\vspace{2mm}
\item[(ii)] $0<1/M\leq u(t)\leq u_c-1/M$;
\vspace{2mm}
\item[(iii)] $|\mu_j(t)|\geq 1/M$ holds for the eigenvalues of $T_{\Gamma(t)}(u_\Gamma)$;
\vspace{2mm}
\item[(iv)] $\Gamma(t)$ satisfies a uniform ball condition.
\end{itemize}
Then $t_*=\infty$, i.e.\ the solution exists globally, and  it converges in $\cSM_0$
to an equilibrium $(u_\infty,\Gamma_\infty)\in\cE_0$ .
Conversely, if $(u(t),\Gamma(t))$ is a global solution in $\cSM_0$ which converges to an equilibrium $(u_\infty,\Gamma_\infty)\in\cE_0$ in $\cSM_0$ as $t\to\infty$, then the properties $(i)$-$(iv)$ are valid.
\end{theorem}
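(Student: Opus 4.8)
The plan is to follow the proof of Theorem~\ref{Qual} almost verbatim, paying extra attention to the new well-posedness obstruction encoded in condition~(iii). First I would observe that, by (i) and the uniform ball condition (iv), the orbit $\Gamma([0,t_*))$ is bounded in $W^{3-3/p}_p(\Omega,r)$ for a fixed radius $r>0$, hence relatively compact in $W^{3-3/p-\ve}_p(\Omega,r)$ for small $\ve>0$. Covering this set by finitely many balls with real analytic centers $\Sigma_k$, we find for each $t\in[0,t_*)$ an index $j=j(t)$ with ${\rm dist}_{W^{3-3/p-\ve}_p}(\Gamma(t),\Sigma_j)\leq\delta$; setting $J_k=\{t:\, j(t)=k\}$ and using a Hanzawa transformation $\Xi_k$ over each $\Sigma_k$, conditions (i)--(ii) show that the pull-backs $\{u(t,\cdot)\circ\Xi_k:\, t\in J_k\}$ are bounded, hence relatively compact, in $W^{2-2/p-\ve}_p(\Omega\setminus\Sigma_k)$.

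Next I would use condition~(iii) to obtain a \emph{uniform} local existence time. Because the eigenvalues of $T_{\Gamma(t)}(u_\Gamma)$ are bounded away from zero, the operator $[\kappa_\Gamma(u_\Gamma)T_\Gamma(u_\Gamma)]^{-1}$ occurring in the nonlocal term $F_\Gamma$ and in the transformed version of \eqref{mcflow-2} stays bounded along the orbit, so the local well-posedness result of subsection~6.1 applies on the relatively compact set of initial configurations $(u(t),\Gamma(t))$ with a common existence interval $(0,\tau]$. The compatibility relation $\varphi(u_{\Gamma0})+\sigma(u_{\Gamma0})\cH(\Gamma_0)=0$, needed to recover the Gibbs--Thomson law from \eqref{mcflow-2}, holds along the solution since it is propagated by the flow. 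By uniqueness $(u^1(\tau),\Gamma^1(\tau))=(u(t+\tau),\Gamma(t+\tau))$, and continuous dependence then yields relative compactness of $(u(\cdot),\Gamma(\cdot))$ in $\cSM_0$; in particular $t_*=\infty$ and the orbit is precompact in $\cSM_0$.

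For convergence I would invoke that the negative total entropy $-\Phi$ is a strict Lyapunov functional (Section~2, whose computation remains valid here), so the limit set $\omega(u,\Gamma)\subset\cSM_0$ is nonempty by compactness and contained in the set $\cE$ of equilibria. Since (iii) gives a uniform lower bound $1/M$ on the eigenvalues of $T_\Gamma(u_\Gamma)$, the same bound survives in the limit, so in fact $\omega(u,\Gamma)\subset\cE_0=\cSM_0\cap\cE$; in particular the well-posedness condition \eqref{wellp0} holds near every point of $\omega(u,\Gamma)$. As the solution comes close to $\cE_0$ and stays there, Theorem~\ref{nonlinstab0} applies and shows that $(u(t),\Gamma(t))$ converges in $\cSM_0$ at an exponential rate to some $(u_\infty,\Gamma_\infty)\in\cE_0$. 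The converse implication is a compactness argument as in Theorem~\ref{Qual}: a convergent global solution satisfies all of (i)--(iv) on $[0,\infty)$, where (iii) follows because $T_\Gamma$ is invertible at the limit and depends continuously on $(u,\Gamma)$, and (iv) because the limiting sphere configuration satisfies a ball condition which persists for nearby interfaces.

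The main obstacle I anticipate is precisely this interplay with condition~(iii): one must check that \emph{uniform} invertibility of $T_\Gamma(u_\Gamma)$ (rather than mere pointwise invertibility) is what keeps the nonlinearities $F_0$, in particular $F_\Gamma$ with its factor $[\kappa_\Gamma(u_\Gamma)T_\Gamma(u_\Gamma)]^{-1}$, under control with uniform constants on the precompact orbit, and that the same uniform bound is exactly what confines the $\omega$-limit set to $\cE_0$ so that Theorem~\ref{nonlinstab0} is applicable. Everything else is a routine transcription of the $\gamma>0$ argument in the proof of Theorem~\ref{Qual}.
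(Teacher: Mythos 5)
Your argument is correct and is exactly the route the paper takes: the paper's proof of this theorem is literally the single sentence that it ``follows the same lines as that of Theorem~\ref{Qual},'' and your write-up is a faithful transcription of that argument with the one genuinely new ingredient — using the uniform spectral bound (iii) to keep $[\kappa_\Gamma(u_\Gamma)T_\Gamma(u_\Gamma)]^{-1}$ uniformly controlled along the precompact orbit and to confine the $\omega$-limit set to $\cE_0$ so that Theorem~\ref{nonlinstab0} applies — spelled out correctly. Nothing further is needed.
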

\begin{proof}
The proof follows the same lines as that of Theorem \ref{Qual}.
\end{proof}

\bigskip

\noindent
{\bf Acknowledgment:}
J.P.\ and M.W.\ express their thanks for hospitality
to the Department of Mathematics at Vanderbilt University, where important parts of this work originated.
\goodbreak


\end{document}